\documentclass[a4paper,reqno]{amsart}
\usepackage{ 
	amsmath, 
	amssymb, 
	amsthm, 
	amscd, 
	amstext, 
	verbatim, 
	enumerate, 
	mathtools,
	color}

\usepackage{hyperref}
\usepackage{comment}
\usepackage[all]{xy}
\usepackage[utf8]{inputenc} %
\usepackage{lmodern}        %
\usepackage[T1]{fontenc}    %
\usepackage{hyperref}       %
\usepackage{url}            %
\usepackage{booktabs}       %
\usepackage{amsfonts}       %
\usepackage{nicefrac}       %
\usepackage{microtype}      %
\usepackage{xcolor}         %
\usepackage{mathtools}		%
\usepackage{aliascnt} 		%
\usepackage{enumitem}		%
\usepackage{graphicx}       %
\usepackage{caption}        %
\usepackage{subcaption}     %
\usepackage{float} 			%
\usepackage{forest}			%
\usepackage{mathrsfs} 		%
\setlist[enumerate,1]{label={(\roman*)}} %
\makeatletter
\newcommand{\myitem}[1]{%
	\item[#1]\protected@edef\@currentlabel{#1}%
}
\makeatother

\usepackage{tikz}
\usetikzlibrary{arrows.meta}
\usepackage{tikz-cd}

\hypersetup{
	colorlinks,
	linkcolor={blue!80!black},
	citecolor={blue!80!black},
	urlcolor={blue!80!black}
}

\theoremstyle{plain}
\newtheorem{theorem}{Theorem}[section]

\newaliascnt{propCt}{theorem}
\newtheorem{prop}[propCt]{Proposition}
\aliascntresetthe{propCt}

\newaliascnt{lemmaCt}{theorem}
\newtheorem{lemma}[lemmaCt]{Lemma}
\aliascntresetthe{lemmaCt}

\newaliascnt{corCt}{theorem}
\newtheorem{cor}[corCt]{Corollary}
\aliascntresetthe{corCt}

\theoremstyle{definition}
\newaliascnt{defiCt}{theorem}
\newtheorem{defi}[defiCt]{Definition}
\aliascntresetthe{defiCt}

\newaliascnt{notaCt}{theorem}
\newtheorem{nota}[notaCt]{Notation}
\aliascntresetthe{notaCt}

\newaliascnt{remCt}{theorem}

\aliascntresetthe{remCt}

\newaliascnt{qstCt}{theorem}

\aliascntresetthe{qstCt}

\newaliascnt{pbmCt}{theorem}

\aliascntresetthe{pbmCt}

\newaliascnt{exaCt}{theorem}

\aliascntresetthe{exaCt}

\theoremstyle{plain}
\newtheorem{alphthm}{Theorem}			%

\newaliascnt{alphqstCt}{alphthm}

\aliascntresetthe{alphqstCt}

\newaliascnt{alphcorCt}{alphthm}
\newtheorem{alphcor}[alphcorCt]{Corollary}
\aliascntresetthe{alphcorCt}

\newaliascnt{alphpropCt}{alphthm}

\aliascntresetthe{alphpropCt}

\newaliascnt{alphproblemCt}{alphthm}

\aliascntresetthe{alphproblemCt}

\numberwithin{equation}{section}

\newcommand{\N}{\mathbb N}
\newcommand{\Z}{\mathbb Z}

\DeclareMathOperator{\Prob}{Prob}

\DeclareMathOperator{\id}{id} 
\DeclareMathOperator{\dom}{dom}

\begin{document}
\title{Dynamical comparison for local homeomorphisms}

\author[Shirly Geffen]{Shirly Geffen}
\address[Shirly Geffen]{Universit\"at M\"unster, Mathematisches Institut, Einsteinstr. 62, 48149 M\"unster, Germany}
\email{sgeffen@uni-muenster.de}
\urladdr{https://shirlygeffen.com/}

\author[Shanshan Hua]{Shanshan Hua}
\address[Shanshan Hua]{Universit\"at M\"unster, Mathematisches Institut, Einsteinstr. 62, 48149 M\"unster, Germany}
\email{shanshan.hua@uni-muenster.de}
\urladdr{https://sites.google.com/view/shanshan-hua/}

\author[Julian Kranz]{Julian Kranz}
\address[Julian Kranz]{Universit\"at M\"unster, Mathematisches Institut, Einsteinstr. 62, 48149 M\"unster, Germany}
\email{julian.kranz@uni-muenster.de}
\urladdr{https://sites.google.com/view/juliankranz/}

\keywords{topological groupoids; partial actions; dynamical comparison; purely infinite $C^*$-algebras; groupoid homology; AH-conjecture}
\subjclass[2020]{Primary: 22A22, 54H20, 37B05; Secondary: 20J06, 46L35, 54F45}

\begin{abstract}
We prove dynamical comparison for Deaconu--Renault groupoids associated to minimal surjective non-injective local homeomorphisms of compact metrizable spaces with finite Lebesgue covering dimension. 
As a corollary, the associated $C^*$-algebras are UCT Kirchberg algebras, recovering results by Carlsen--Thomsen via dynamical methods. 
In the zero-dimensional case, our result also verifies Matui's AH-conjecture for these groupoids using a recent breakthrough of Xin Li.
Our proof combines techniques from both the purely infinite and stably finite regimes:
We construct partial actions of non-abelian free groups as suitable ``large subgroupoids'' and establish comparison properties for these using the paradoxical towers technique developed by Gardella--Geffen--Kranz--Naryshkin. 
The boundary of the subgroupoid is controlled by a groupoid version of the topological small boundary property which we deduce from finite covering dimension of the unit space.
As a byproduct, we prove the classical small boundary property for minimal actions of countable discrete groups on finite-dimensional compact metrizable spaces without any freeness assumption.
\end{abstract} 

\maketitle
\tableofcontents

\section{Introduction} 
David Kerr's \emph{dynamical comparison} \cite{Kerr2020} is a powerful regularity property for minimal group actions on compact Hausdorff spaces.
His original $C^*$-algebraic motivation was to identify dynamical properties of the action that imply regularity properties leading to classifiability of the associated crossed product $C^*$-algebra in the sense of the Elliott classification programme (see \cite{Winter2018,White2023} for surveys of the latter). 
In conjunction with the small boundary property \cite{Shub1991}, dynamical comparison is currently one of the most effective tools for establishing $\mathcal{Z}$-stability and thus Elliott-classifiability for crossed products associated to actions of amenable groups \cite{Kerr2020,KerrSzabo}.
Even beyond the classifiable setting, the combination of dynamical comparison and the uniform Rokhlin property \cite{Niu2022} produced powerful structural results including dynamical criteria for stable rank one in crossed products \cite{LiNiu,NaryshkinURPC,Bell25}.

Since its introduction, dynamical comparison has also led to many significant advances beyond operator algebras. 
In particular, it has recently appeared in connection with the Lindenstrauss--Weiss shift-embeddability problem \cite{Lindenstrauss2000,NaryshkinURPC} and Matui's AH-conjecture \cite{Matui12,Matui2016,Li2025}. 
The latter is originally stated for in the setting of \'etale groupoids -- a commonly agreed upon framework for topological dynamical systems beyond group actions. 
Matui conjectures for any essentially principal minimal \'etale groupoid $G$ with Cantor unit space the existence of a short exact sequence 
\[H_0(G)\otimes \Z/2\to [[G]]_{\mathrm{ab}}\to H_1(G)\to 0,\]
where $[[G]]$ denotes the \emph{topological full group} of the groupoid. 
Topological full groups of \'etale groupoids are an extremely fruitful class of groups, providing (counter-)examples for many open problems in group theory \cite{Juschenko2013,Juschenko2016,Nekrashevych2018,Skipper2019}.

Under the additional assumption of dynamical comparison (originally called \emph{groupoid comparison} in \cite{Ma2022}), Li \cite{Li2025} recently proved a deep structural result relating the groupoid homology to the topological full group homology in terms of an underlying algebraic $K$-theory spectrum. 
His result generalizes famous homological stability and acyclicity results \cite{RandalWilliams2017,Szymik2019} and specializes to Matui's AH conjecture in low degrees. 
These developments make the problem of identifying \'etale groupoids with dynamical comparison both natural and increasingly central.

It is useful to distinguish between two rather different regimes -- the \emph{stably finite} regime and the \emph{purely infinite} regime: 
For dynamical systems admitting invariant probability measures, especially actions of amenable groups, dynamical comparison demands that whenever a closed subset of the space is strictly smaller than an open subset with respect to all invariant probability measures, then the former can be dissected into finitely many pieces and moved disjointly into the latter using the dynamics. 
In this case, the existence of invariant probability measures forces the associated $C^*$-algebra to be \emph{stably finite}.
On the other hand, for actions admitting no invariant probability measures, comparison reflects the paradoxical nature of the system: the measure-theoretic obstruction disappears, and comparison implies that the whole space can be dynamically compressed into any small open subset. As a consequence, the associated $C^*$-algebras will typically be \emph{purely infinite}.

For actions of amenable groups, dynamical comparison has been verified in a number of important cases: for all minimal actions of finitely generated groups with polynomial growth \cite{Naryshkin22}, for free actions of groups with subexponential growth on Cantor spaces \cite{Downarowicz2023}, for free actions of elementary amenable groups on finite-dimensional compact metrizable spaces \cite{KerrNaryshkin}, and for actions of certain amenable topological full groups \cite{NaryshkinPetrakos,NaryshkinEXT}.
For actions of nonamenable groups, dynamical comparison has been established for many nonamenable groups under the assumption that the system is both minimal and amenable \cite{Gardella2023}. 
Moreover, every countable exact group admits a free minimal amenable action with dynamical comparison \cite{Roerdam2012}.
Very recently, Boldrini--Prasad constructed the first examples of minimal topologically free actions of discrete groups which fail to have dynamical comparison \cite{Boldrini26}. 
Their examples are inherently non-amenable and can be arranged both in the presence and in the absence of invariant probability measures.

In the setting of \'etale groupoids, a lot of the general theory surrounding dynamical comparison and almost finiteness has been developed \cite{AnantharamanDelaroche1997,Matui12,Rainone2018,Suzuki20,Boenicke2020,AraBonickeBosaLi,ABBL2020,Ma2022,Ma2026a}. 
However, compared to the case of group actions, relatively few ``unconditional'' results establishing dynamical comparison for a given class of groupoids exist. 
A notable contribution to the stably finite regime is Austad--B\"onicke's dynamical comparison results for groupoids of polynomial growth \cite{Austad2026}. 
The purpose of this paper is to identify a large class of groupoids with dynamical comparison in the purely infinite regime:
\begin{alphthm}[\autoref{thm-mainthm-dr}]
Let $T\colon X\to X$ be a minimal surjective local homeomorphism of a compact metrizable space of finite covering dimension. Then the Deaconu--Renault groupoid $G_T$ satisfies dynamical comparison.
\end{alphthm}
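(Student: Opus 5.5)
The plan is to split into two cases according to whether $T$ is injective.

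\textbf{Injective case.} If $T$ is injective, then $T$ is a minimal homeomorphism. The groupoid $G_T$ is then the transformation groupoid of a minimal $\Z$-action, and dynamical comparison follows from Naryshkin's result for minimal actions of groups of polynomial growth \cite{Naryshkin22}.

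\textbf{Non-injective case: reduction.} First I will show that $G_T$ admits no invariant probability measure. An invariant measure $\mu$ must satisfy $\mu(T(U))=\mu(U)$ for every open bisection $U$ on which $T$ is injective. Since $T$ is not injective, there are disjoint open sets $V_1,V_2$ with $T(V_1)=T(V_2)$. Minimality, together with a finite subcover by injectivity sets, should then yield $\mu(X)>\mu(X)$, a contradiction. So dynamical comparison reduces to showing $X\precsim U$ for every nonempty open $U\subseteq X$. By compactness and the standard reduction, it suffices to cover $X$ by finitely many open sets that can be moved disjointly into $U$ along bisections of $G_T$.

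\textbf{Non-injective case: free-group subgroupoid and boundary control.} The main construction is a ``large'' subgroupoid $H\subseteq G_T$ given by a partial action of a non-abelian free group on $X$. Choose a point with two distinct preimages $x_1\neq x_2$, small neighbourhoods $W_1,W_2$ on which $T$ is injective with a common image, and use the local inverse branches $(T|_{W_i})^{-1}$ composed with powers of $T$ as generators; minimality lets us return to the $W_i$. For this partial action I would verify the paradoxicality hypotheses of the paradoxical towers technique of Gardella--Geffen--Kranz--Naryshkin, which then gives comparison for $H$ on the region where the partial action is defined with good tower structure. The remaining part of the space is controlled by a boundary of the domains. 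To make it negligible, I would prove a groupoid version of the small boundary property from finite covering dimension: every point has arbitrarily small neighbourhoods whose boundaries meet each orbit only in a set of bounded ``density''. Concretely, no point lies in more than $\dim X+1$ translated boundaries along a long orbit segment, which follows from a general-position argument in dimension theory. With this, the domains can be chosen so that the boundary pieces are themselves small, and hence subequivalent to a further small open subset of $U$ via minimality.

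\textbf{Main obstacle.} I expect the hardest step to be the small boundary step and its compatibility with the towers. The issue is arranging simultaneously that the free-group partial action has a paradoxical structure and that the leftover set, where domains fail, can be absorbed using only finitely many bisections. My first attempt would be to perturb the defining neighbourhoods $W_i$ using the finite-dimensional general-position lemma, applied to finitely many relevant compositions of branches of $T$. I would then bound the leftover set by a union of at most $\dim X+1$ ``thin'' closed sets, each of which is compressible into $U$ by minimality.
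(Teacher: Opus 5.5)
Your outline matches the paper's architecture: rule out invariant measures, then get $X\prec U$ from an embedded free-group partial action whose leftover is negligible. Your direct exclusion of invariant measures also works: minimality gives $\mu$ full support, and invariance gives $1=\mu(X)=\int_X|T^{-1}(y)|\,d\mu(y)$. That is impossible, since $\{y: |T^{-1}(y)|\ge 2\}$ is open and nonempty, hence of positive measure.

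The gap is in the central construction. A partial action generated by the branches $(T|_{W_i})^{-1}$ near a single point has generators with small domains, and the tower argument cannot run on it. After covering $X=W_1\cup W_2\cup W_3$ using the almost-invariant map $\mu\colon X\to\Prob(\Gamma)$, one must apply $\alpha_{h_i^{-1}}$ to essentially all of $W_i$. So the tower elements $h_i$ must be defined on $X$ minus a thin set.

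This is condition (iii) of \autoref{thm-large-hyperbolic-subgroupoid}: two independent loxodromic elements $a,b$ with $X\setminus D_a$ and $X\setminus D_b$ thin. Then every word in the positive semigroup $\langle a,b\rangle^+$ has co-thin domain. That is why \autoref{lem:paradoxical_towers} must choose the $h_i$ from that semigroup, and why you cannot simply ``verify the hypotheses'' of the Gardella--Geffen--Kranz--Naryshkin technique, which concerns global actions.

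For $G_T$ this means two inverse branches of $T$ with disjoint images, each defined off a thin set. At a point with a unique preimage the two branches cannot both be defined. So, without further work, the closed set of such points (which may have interior) would have to be shown thin in advance. Appealing to minimality to ``return to the $W_i$'' via powers of $T$ does not fix this. $G_T$-minimality does not make forward orbits dense (the one-sided full shift has fixed points), and such compositions do not have co-thin domains anyway.

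The missing steps are the following.
\begin{enumerate}
\item \autoref{lem:unif_non_inj}: some power $T^n$ has all fibres of size at least two. This uses the increasing open sets $L_n=\{x: |T^{-n}(x)|\ge 2\}$ and an invariance argument.
\item \autoref{lem:min_n_set} and \autoref{lem-open-subgroupoid}: pass to a clopen set on which $T^n$ is minimal, and transfer comparison back to $G_T$.
\item Glue local branches over a finite cover by neighbourhoods with thin boundaries. This yields disjoint open $A_1,A_2$ with $T|_{A_i}\colon A_i\to X\setminus E$ homeomorphisms and $E$ thin.
\item Check that the boundary of the embedded subgroupoid $F_d\ltimes X\cong G_S$ inside $G_T$ is thin.
\end{enumerate}

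Your finite-dimensional step points in the right direction but is understated. A closed set is not compressible into $U$ ``by minimality'', which only yields overlapping moves. Disjointness comes from an induction on the collision rank $\mathcal C_m$, using a $(d+1)$-colourable refinement of covers. Also, $G_T$ need not be free (periodic points exist). So general position must be phrased via separated controlled bisections rather than translates along orbit segments.
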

For global homeomorphisms, the associated groupoids are stably finite and satisfy dynamical comparison by \cite{Downarowicz2023,Naryshkin22}. 
Our novel contribution is the case of non-injective local homeomorphisms for which we prove pure infiniteness of the groupoid. 
In this sense, our result can be understood as a dichotomy: Under natural assumptions, Deaconu--Renault groupoids are either stably finite or purely infinite. 

The main new corollary of our result is the following consequence of Li's theorem \cite{Li2025}:
\begin{alphcor}
Let $T\colon X\to X$ be a minimal surjective local homeomorphism of the Cantor set. Then the Deaconu--Renault groupoid $G_T$ satisfies Matui's AH-conjecture.
\end{alphcor}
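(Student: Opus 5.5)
The corollary follows from Li's theorem once its hypotheses are checked for $G=G_T$. The plan is to verify these hypotheses one by one and then specialize Li's result to low degrees.

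Li's theorem \cite{Li2025} applies to ample groupoids that are minimal, effective (essentially principal) and have dynamical comparison. Its conclusion relates the homology of $[[G]]$ to the groupoid homology of $G$. In degrees $\le 1$ this comparison yields the exact sequence $H_0(G)\otimes\Z/2\to[[G]]_{\mathrm{ab}}\to H_1(G)\to 0$, which is Matui's AH-conjecture.

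The verification goes as follows.
\begin{enumerate}
\item Since $T$ is a local homeomorphism of the Cantor set $X$, $G_T$ is an ample, second countable, Hausdorff étale groupoid with unit space $X$.
\item Minimality of $T$, meaning there are no nontrivial closed sets $F$ with $T^{-1}(F)\subseteq F$, or equivalently dense grand orbits, translates directly into minimality of $G_T$.
\item The Cantor set has covering dimension zero. Hence the main theorem (\autoref{thm-mainthm-dr}) gives dynamical comparison for $G_T$.
\end{enumerate}

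The remaining hypothesis is essential principality, i.e.\ that the points with trivial isotropy, meaning no $x$ with $T^n x = T^m x$ for $n\ne m$ in a nontrivial germ sense, are dense. I expect this to be the main obstacle. I would first reduce it to the standard criterion for Deaconu--Renault groupoids: $G_T$ is effective if and only if no open set consists of points satisfying a fixed relation $T^n x=T^m x$ with $T^n$, $T^m$ agreeing locally in a nontrivial way. Equivalently, the set of aperiodic points must be dense, with care taken about eventually periodic points.

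For a minimal $T$ on an infinite space, suppose some open set $U$ had $T^{n}|_U=T^{m}|_U$ with $n\neq m$. Then a periodic structure would propagate: minimality forces a finite orbit, which contradicts $X$ being a Cantor set. This uses minimality plus infiniteness of $X$.

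If Li's theorem is instead stated only for minimal groupoids with comparison, this step may be unnecessary. In that case I would simply note that effectiveness holds and is harmless.

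Finally I would note the case distinction. If $T$ is injective, then $T$ is a minimal homeomorphism, and the AH-conjecture is already known from Matui's work. Alternatively one uses comparison from \cite{Downarowicz2023,Naryshkin22} together with Li's theorem. So the argument covers both cases uniformly.
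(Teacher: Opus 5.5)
Your proposal is correct and follows the paper's own route: dynamical comparison for $G_T$ from \autoref{thm-mainthm-dr} in the zero-dimensional case, fed into Li's theorem, gives the AH-conjecture. The remaining standing hypotheses are routine; the paper obtains effectiveness from \cite[Lemma~7.5]{Armstrong}. Your direct sketch also works once you make explicit that minimality plus surjectivity force the open $T$-invariant set of periodic points to be all of $X$.
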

In particular, this recovers the AH conjecture for graph groupoids \cite{Nyland2021}.
Our novel contribution is again to the case of non-injective local homeomorphisms since the case of global homeomorphisms is a classical result of Matui \cite{Matui2006}.
On the $C^*$-algebraic side, we obtain a dynamical proof of a theorem by Carlsen--Thomsen \cite{Carlsen2012}:
\begin{alphcor}
    Let $T\colon X\to X$ be a minimal surjective non-injective local homeomorphism of a compact metrizable space of finite covering dimension. Then $C^*_r(G_T)$ is a UCT Kirchberg algebra.
\end{alphcor}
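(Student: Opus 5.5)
The plan is to derive the corollary from Theorem A together with the standard groupoid-to-$C^*$ dictionary. Simplicity, pure infiniteness, separability, nuclearity and the UCT will each be checked separately.

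\textbf{Structural properties of $G_T$.} The Deaconu--Renault groupoid $G_T$ is second countable, locally compact, Hausdorff and étale, with compact unit space $X$. It is amenable by the classical result of Deaconu and Anantharaman-Delaroche–Renault for local homeomorphisms. Consequently $C^*_r(G_T)=C^*(G_T)$ is separable and nuclear, and it satisfies the UCT by Tu's theorem for amenable groupoids. Minimality of $T$ gives minimality of $G_T$.

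\textbf{Simplicity.} I will show that $G_T$ is topologically principal (effective). An isotropy arrow $(x,k,x)$ with $k\neq 0$ comes from an eventually periodic point: $T^{n+k}x=T^nx$. If such a point existed, minimality would force $X$ to be the finite orbit closure of the periodic point. On a finite space a surjective map is a bijection, which contradicts non-injectivity. So there are no eventually periodic points, the isotropy is trivial, and $G_T$ is principal. Together with amenability and minimality, this makes $C^*_r(G_T)$ simple by the Renault / Brown–Clark–Farthing–Sims criterion.

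\textbf{Pure infiniteness.} This is the substantive step, and it goes through Theorem A (\autoref{thm-mainthm-dr}). Dynamical comparison for $G_T$ says: whenever a closed set is strictly smaller than an open set under every invariant probability measure, the closed set can be subequivalent to the open one. I first show that $G_T$ has no invariant probability measures. A $G_T$-invariant measure $\mu$ satisfies $\mu(T(U))=\mu(U)$ for every open bisection $U$ on which $T$ is injective. Cover $X$ by finitely many such sets and argue via the transfer operator: invariance means $\sum_{y\in T^{-1}x}\mu$-weights behave like those of a conformal measure with Jacobian $1$. This yields $\mu(T^{-1}A)=\deg\cdot\mu(A)$ locally. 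Since $T$ is surjective and non-injective, the degree function $x\mapsto|T^{-1}(x)|$ is continuous, integer-valued, $\geq1$, and $>1$ somewhere; by minimality (the set where it is $>1$ is open and nonempty) the $\mu$-weighted total gives $\mu(X)=\mu(T^{-1}X)>\mu(X)$, a contradiction. So the condition in dynamical comparison is vacuous. Every nonempty open set $V$ then dominates the compact space $X$, meaning $G_T$ is purely infinite in the sense of Ma and Bönicke–Li. By the results of Ma (or Bönicke–Li / Rainone–Sims) for minimal, topologically principal, amenable groupoids with comparison and no invariant measures, $C^*_r(G_T)$ is purely infinite. Combined with the previous steps, $C^*_r(G_T)$ is a unital, separable, nuclear, simple, purely infinite algebra satisfying the UCT, i.e. a UCT Kirchberg algebra.

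The main obstacle is making the nonexistence of invariant measures airtight, specifically the inequality $\mu(X)<\mu(T^{-1}(X))$ from strict degree $\geq2$ on an open set. To handle it cleanly, I would partition $X$ into finitely many Borel pieces $A_i$ on which $T$ is injective, with the sets $T(A_i)$ covering $X$. Invariance gives $\mu(T(A_i))=\mu(A_i)$, so $\sum_i\mu(A_i)=\mu(X)$ while $\sum_i\mu(T(A_i))\geq\mu(X)+\mu(\{\deg\geq 2\})$. This forces $\mu(\{\deg\ge2\})=0$, which is impossible since that set is a nonempty open subset of a minimal system and thus has full-support positivity.
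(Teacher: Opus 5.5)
Your simplicity step rests on a false claim: $G_T$ need not be principal, and the argument that there are no eventually periodic points fails. Minimality here is the groupoid notion: there is no nonempty proper closed $Y\subset X$ with $T(Y)\subset Y$ \emph{and} $T^{-1}(Y)\subset Y$. The forward orbit $\{y,Ty,\dotsc,T^{k-1}y\}$ of a periodic point is closed with $T(Y)=Y$. It is not backward invariant once $T$ is non-injective, so minimality says nothing about it; only the grand orbit, including all iterated preimages, must be dense.

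Concretely, take the one-sided shift $\sigma$ on $\{0,1\}^{\N}$. It is a surjective, non-injective local homeomorphism of a zero-dimensional compact metrizable space. It is minimal in this sense, since the $G_\sigma$-orbit of $x$ contains every $wx$ with $w$ a finite word. Yet $0^\infty$ is a fixed point, and $(0^\infty,1,0^\infty)\in G_\sigma$ is nontrivial isotropy. What is true, and all that the Brown--Clark--Farthing--Sims criterion and Ma's theorem need, is topological principality. Your proposal never establishes it, so both your simplicity step and your appeal to Ma for pure infiniteness are currently unsupported.

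The paper gets topological principality by citing a lemma of Armstrong. You can also prove it directly. By Baire, it suffices that each closed set $\{x\in X\mid T^{n+k}x=T^nx\}$ with $k\geq 1$ has empty interior. Suppose it contained a nonempty open set $O$. Then $V\coloneqq T^n(O)$ is nonempty, open and satisfies $T^k|_V=\id_V$. Hence $V'\coloneqq\bigcup_{j=0}^{k-1}T^j(V)$ is open with $T(V')=V'$. The increasing union $\bigcup_{m\geq 0}T^{-m}(V')$ is then a nonempty open invariant set, so it equals $X$ by minimality. By compactness $X=T^{-M}(V')$ for some $M$, and surjectivity gives $X=T^M(X)\subset V'$. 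Thus $T^k=\id_X$, so $T$ is injective, a contradiction.

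With this repair, the rest of your outline is correct and matches the paper's proof: amenability, nuclearity via Renault, the UCT via Tu, and simplicity plus pure infiniteness via Ma. Your counting argument for the absence of invariant measures is valid:
$\mu(X)=\sum_i\mu(T(A_i))=\int|T^{-1}(x)|\,d\mu(x)\geq\mu(X)+\mu(\{x\mid |T^{-1}(x)|\geq 2\})$, combined with full support. It is more direct than the paper's route, where this comes out of the reduction in \autoref{cor-reduction} via \autoref{lem-open-subgroupoid}. The heuristic ``conformal measure'' paragraph before it should simply be dropped.
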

\begin{proof}
    Note that $G_T$ is topologically principal by \cite[Lemma~7.5]{Armstrong} and amenable by \cite[Lemma~3.5]{SimsWilliams}.
    Thus, $C^*_r(G_T)$ nuclear by \cite{Renault1980}, simple and purely infinite by \cite[Corollary~1.2]{Ma2022} and satisfies the UCT by \cite{Tu1999}.
\end{proof}

The proof of our main theorem relies heavily on establishing dynamical comparison for certain minimal amenable \emph{partial actions} of nonabelian free groups with ``large domains'' (see \autoref{sec-partial}).
This part of the argument is inspired by earlier joint work of the first and third-named authors with Gardella and Naryshkin, where it was shown, among other things, that minimal amenable global actions of non-elementary hyperbolic groups have dynamical comparison \cite{Gardella2023}.
The precise link to Deaconu--Renault groupoids is given by a recent theorem of de Castro and Steinberg \cite[Theorem~2.2]{Steinberg2026} which realizes every Deaconu--Renault groupoid with totally disconnected unit space as the transformation groupoid of a \emph{partial group action} $F_d\curvearrowright X$ given by homeomorphisms $\{\alpha_g\colon D_{g^{-1}}\xrightarrow{\cong} D_g\}_{g\in F_d}$ between open subsets of $X$. 
Our proof of dynamical comparison in the totally disconnected case relies on the additional assumption that at least two of the generators $a_1,\dotsc,a_d\in F_d$ act via a surjection $\alpha_g\colon D_{g^{-1}}\xrightarrow{\cong} X$. 
Equivalently, the local homeomorphism $T\colon X\to X$ has fibers of cardinality at least two. 
The reduction to this special case is the content of \autoref{sec-loc-homeo}. 

In higher dimensions, de Castro--Steinberg's realization is no longer directly available. Instead, we mimic the zero-dimensional construction by building a ``large open subgroupoid'' of $G_T$ arising from a partial action of a non-abelian free group, and prove a relative comparison result for this subgroupoid.
The main technical difficulty of the higher-dimensional case is then to control the boundary of this subgroupoid and proving that it can be ignored for purposes of dynamical comparison.
This part of the argument is based on the finite-dimensional general-position methods developed by Lindenstrauss \cite{Lindenstrauss} and Szab{\'o} \cite{Szabo15}, together with the small-boundary techniques of Kerr--Szab{\'o} \cite{KerrSzabo}. 
To this end, we define a formally weaker variant of Szab\'os \emph{topological small boundary property} which we call the \emph{thin boundary property} (see \autoref{def-thin}) in analogy to Buck's original definition for $\Z$-actions \cite{Buck}.
We extend the aforementioned methods to the groupoid setting and prove the following theorem:
\begin{alphthm}[\autoref{cor-sbp-groupoid}]
    Let $G$ be a second countable Hausdorff minimal \'etale groupoid with a compact metrizable unit space of finite covering dimension. Then $G$ satisfies the thin boundary property. 
\end{alphthm}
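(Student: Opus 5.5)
The plan is to adapt the Lindenstrauss--Szab\'o general position argument, in the form used by Kerr--Szab\'o, from group actions to étale groupoids. Since the thin boundary property (\autoref{def-thin}) is not yet stated, I assume it asks for the following. For every point $x$ and open neighbourhood $U$ there is an open $V$ with $x\in V\subseteq U$ whose boundary $\partial V$ is \emph{thin}. Thin should mean a combinatorial smallness condition rather than a measure condition. Concretely, for some $N$ depending only on $\dim X$, every point meets $\partial V$ along any finite family of bisections at most $N$ times, in the sense of Buck. Equivalently, it means a vanishing upper density of returns to $\partial V$ along the groupoid.

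\textbf{Step 1: reduction to countably many bisections.} Since $G$ is second countable and étale, I fix a countable cover of $G$ by open bisections $B_1,B_2,\dots$ with compact closures, and let $\beta_n\colon s(B_n)\to r(B_n)$ be the induced partial homeomorphisms. I expect thinness to reduce to the following condition on the $\beta_n$. For every finite set $F$ of indices with $|F|>\dim X$, the set of points $y$ such that $\beta_n(y)\in\partial V$ for all $n\in F$ must be empty, whenever the compositions are defined. This is the analogue of the statement in \cite{Szabo15} that boundaries lie in general position along the orbit.

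\textbf{Step 2: constructing $V$ by a Baire category argument.} Following Lindenstrauss, I write candidate neighbourhoods as $V=f^{-1}((0,\infty))$ for continuous $f\colon X\to[-1,1]$. Here $f$ is negative outside $U$ and positive at $x$, so that $\partial V\subseteq f^{-1}(0)$. I then consider the complete metric space of such $f$ with the sup norm. For each finite $F$ as above and each compact piece $K$ of the common domain, the set of $f$ for which $(f\circ\beta_n)_{n\in F}$ has no common zero on $K$ is open. I show it is dense by the standard dimension-theory fact: a map from a space of dimension $d$ into $\mathbb R^{|F|}$ with $|F|>d$ can be perturbed to miss $0$. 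The key requirement is that the perturbation can be made independently in the coordinates $f\circ\beta_n$.

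\textbf{Step 3: where minimality enters, and the main obstacle.} Independence of the coordinates needs the points $\beta_n(y)$, $n\in F$, to be distinct for $y$ in $K$. In the free case this is automatic. Without freeness, some of the points may coincide because of isotropy, and then the coordinates are not independent. I would handle this as follows:
\begin{enumerate}
\item Stratify $K$ by the coincidence pattern of the points $\beta_n(y)$.
\item On each stratum, argue with the number of \emph{distinct} points rather than with $|F|$.
\item On the locally closed set where exactly $k$ of the points are distinct, they vary continuously and pairwise stay apart, so the perturbation can be done locally.
\item Glue the local perturbations by partitions of unity. Since strata have dimension at most $\dim X$, a generic $f$ has no common zero along any $k>\dim X$ distinct points.
\end{enumerate}
The thin condition is then reformulated as a bound on distinct orbit points hitting $\partial V$. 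Minimality should make this enough: with respect to one fixed finite family of bisections, the number of distinct orbit points of any $y$ is uniformly large, so a bound on distinct points hitting $\partial V$ becomes a uniform density bound.

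Making this stratification work rigorously is the main obstacle. The nonfree isotropy and non-Hausdorff-like coincidence sets are exactly what Kerr--Szab\'o avoid by assuming freeness. Finally, a countable intersection of open dense sets is dense, so a suitable $f$ exists for each pair $(x,U)$. Compactness then gives finitely many such $V$ forming a basis-type cover, which yields the thin boundary property.
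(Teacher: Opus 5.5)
You never establish the property that is actually required. In \autoref{def-thin}, a closed set $A\subseteq X$ is $G$-thin if $A\prec W$ for \emph{every} non-empty open $W\subseteq X$. That is, $A$ is covered by the sources of finitely many open bisections whose ranges are pairwise disjoint subsets of $W$. What you aim for (a bound on the number of orbit points in $\partial V$, or a vanishing density) is a topological smallness condition in the sense of \cite{Buck,Szabo15}, not thinness. Even granting Steps 1--3, you end with $\partial V$ meeting every orbit in at most $d=\dim X$ points. Passing from this to $\partial V\prec W$ is the heart of the proof, and it is where minimality is genuinely used. A counting or density bound does not produce the bisections that $\prec$ requires; in the intended application there are no invariant measures at all. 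The paper does this through the collision classes $\mathcal C_m$:
\begin{itemize}
\item \autoref{lem:ctrl_disjnt_fin_rank} shows that a closed set in which no $d+1$ distinct points are moved onto a common point by bisections lies in $\mathcal C_{d-1}$.
\item \autoref{lem:finiteDimThin} proves, by induction on $m$, that members of $\mathcal C_m$ are thin. One picks disjoint non-empty open sets $R,W_1,\dots,W_{d+1}\subseteq W$ and uses minimality to cover $K$ by sources of bisections with ranges in each $W_k$. One then uses $\dim X=d$ to refine this to a cover split into $d+1$ colour classes whose members have pairwise disjoint closures, and moves the $k$-th class into $W_k$. The collision sets of same-coloured pieces lie in $\mathcal C_{m-1}$ and are pushed into $R$ by the inductive hypothesis.
\end{itemize}
An argument of this kind is missing from your proposal.

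For the part you do address, your Baire category argument over functions is a genuinely different route from the paper's. The paper follows Lindenstrauss and Szab\'o: inductive general position of boundaries with respect to finite families of controlled bisections, via zero-dimensional $F_\sigma$-sets and \cite[Lemma~3.3]{Szabo15}, followed by a diagonal argument. Your route can be made to work, but not through the stratification you sketch. Also, your claim that coincidences are automatically absent in the free case is wrong, since distinct basic bisections may share arrows.

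Instead, index over tuples $(C_0,\dots,C_d;K)$ with the following properties: the $C_i$ are basic open bisections, $K$ belongs to a fixed countable family of compact sets with $K\subseteq\bigcap_i s(C_i)$, and the sets $\theta_{C_i}(K)$ are pairwise disjoint. This is the pull-back form of the paper's separated controlled bisections. For each tuple, the set of $f$ for which $(f\circ\theta_{C_i})_i$ has no common zero on $K$ is open. It is dense: approximate $y\mapsto(f(\theta_{C_i}(y)))_i$ on the at most $d$-dimensional compactum $K$ by a map into $\mathbb R^{d+1}\setminus\{0\}$, then transport the correction to the disjoint compact sets $\theta_{C_i}(K)$ via Tietze extension. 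A generic $f$ then yields $V=f^{-1}((0,1])$ with $\partial V$ meeting each orbit in at most $d$ points. The sets $V$ obtained for all pairs $(x,U)$ already form the basis, so no compactness step is needed at the end. You would still need the comparison argument of the previous paragraph on top of this.
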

In contrast to the results in \cite{Szabo15,Gardella2024}, our result does not incorporate any freeness assumptions. 
As a corollary, we obtain the following result about the classical small boundary property \cite{Shub1991} for group actions:
\begin{alphcor}[\autoref{cor-sbp-group}]
    Let $\Gamma\curvearrowright X$ be a minimal action of a countable discrete group on a compact metrizable space of finite covering dimension. Then $\Gamma\curvearrowright X$ satisfies the small boundary property. 
\end{alphcor}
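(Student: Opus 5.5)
The plan is to deduce the corollary from the preceding theorem (\autoref{cor-sbp-groupoid}) applied to the transformation groupoid $G=\Gamma\ltimes X$. This groupoid is second countable, Hausdorff and \'etale, because $\Gamma$ is countable discrete and $X$ is compact metrizable. Its unit space is $X$, which has finite covering dimension. It is minimal precisely when the action is minimal. So the theorem gives that $\Gamma\ltimes X$ has the thin boundary property. What remains is to translate the thin boundary property for $\Gamma\ltimes X$ back into the classical small boundary property for $\Gamma\curvearrowright X$: every point has arbitrarily small open neighbourhoods $U$ with $\mu(\partial U)=0$ for all $\Gamma$-invariant Borel probability measures $\mu$. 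When there are no invariant measures this condition is vacuous, so we may assume $M_\Gamma(X)\neq\emptyset$.

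First I would recall the precise form of \autoref{def-thin}. I expect it to say, in analogy with Buck, that $X$ has a basis of open sets whose boundaries are ``thin'' or ``null'' in a dynamical sense. Plausible formulations are that $\partial U$ is dynamically smaller than every nonempty open set, or that $\partial U$ admits bisections $\sigma_1,\dots,\sigma_n$ such that every point lies in only boundedly few translates relative to $n$. Second, I would show that such a thinness condition forces $\mu(\partial U)=0$ for every invariant probability measure $\mu$. For this step, note that every $\Gamma$-invariant probability measure on $X$ is an invariant measure for $\Gamma\ltimes X$, because bisections of the transformation groupoid are locally given by the maps $x\mapsto gx$.

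The argument depends on the form of thinness:
\begin{itemize}
\item If thinness is phrased as an ``independence'' condition, i.e.\ for each $k$ there are $n$ pairwise distinct group elements $g_1,\dots,g_n$ such that every point of $X$ lies in at most $k$ of the sets $g_i\partial U$, then summing measures gives $n\mu(\partial U)\le k$. Letting $n/k\to\infty$ yields $\mu(\partial U)=0$.
\item If thinness is phrased as subequivalence, $\partial U\prec V$ for every nonempty open $V$, then invariance gives $\mu(\partial U)\le\mu(V)$. Minimality implies that every invariant measure has full support. Taking $V$ to be small balls around a point $x$ and using regularity, $\mu(V)\to\mu(\{x\})$. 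So it suffices to find $x$ with $\mu(\{x\})=0$. This holds unless $\mu$ has atoms. An invariant atomic measure forces a finite orbit, and minimality then makes $X$ finite. In that case the small boundary property is trivial, since singletons are clopen and have empty boundary.
\end{itemize}

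The main obstacle I expect is the non-free setting, where a bisection of $\Gamma\ltimes X$ need not be globally given by a single group element. Because of this, the passage from groupoid-level thinness to measure zero must use that invariant measures on $X$ for the groupoid coincide with $\Gamma$-invariant ones. I would verify this by decomposing a compact open bisection into finitely many pieces of the form $\{g\}\times W$ and checking that $\mu$ is preserved on each piece. With that identification in place, the estimate above completes the proof.
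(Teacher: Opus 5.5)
Your proposal is correct and follows the paper's own proof: apply \autoref{cor-sbp-groupoid} to $\Gamma\ltimes X$, handle finite $X$ separately, and show that $\Gamma\ltimes X$-thin sets are null for every $\Gamma$-invariant measure. \autoref{def-thin} defines thinness as your second bullet, so the first bullet is moot; the paper gets nullity from $X$ having no isolated points (e.g.\ $\mu(A)\le 1/n$ by comparing with $n$ disjoint nonempty open sets), and your atom/finite-orbit argument works equally well. One small fix: the witnessing bisections need not be compact, so decompose an arbitrary open bisection as $\bigsqcup_{g\in\Gamma}\{g\}\times W_g$ and use countable additivity instead of a finite decomposition.
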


Small-boundary methods have so far been used primarily in the stably finite setting, where they play a central role in proving finite nuclear dimension and $\mathcal Z$-stability for crossed products associated to amenable group actions \cite{Szabo15,Elliott2017,KerrSzabo}.
From this point of view, the finite-dimensional part of our argument shows that small-boundary techniques are also useful in the purely infinite regime. This is somewhat unexpected: previous results establishing dynamical comparison for minimal amenable actions of nonamenable groups, such as those in~\cite{Gardella2023}, did not require any dimensionality assumptions on the underlying space. It remains an interesting question whether the results presented in this work generalize to arbitrary compact metrizable spaces.

In the zero-dimensional case, combinatorial graph-based models for surjective local homeomorphisms have been developed \cite{AraExel,AraClaramunt}. 
In light of the available literature on pure infiniteness for (generalized) graph $C^*$-algebras (see \cite{Bates2000,Hong2003,Sims2006,Pask2006,Ara2012,Pask2017,Pask2021,Pask2026} for an incomplete list), it would be interesting to know to what extent our main result can be obtained from these models.

\subsection*{Acknowledgments} 
The authors were supported by the Deutsche Forschungsgemeinschaft
(DFG, German Research Foundation) under Germany's Excellence Strategy EXC 2044/2-390685587,
Mathematics M{\"u}nster: Dynamics--Geometry--Structure, and 
by the SFB 1442 of the DFG. 
The first and third-named authors gratefully acknowledge Eusebio Gardella and Petr Naryshkin for discussions during a visit to KU Leuven in 2022, where some of the initial ideas concerning conditions under which minimal amenable partial actions of free groups have dynamical comparison were developed.

\subsection*{AI statement}
All of the main ideas of this paper, including the research question, the overall proof strategy and the proofs themselves have been developed by the authors with the exception of preliminary drafts of the proofs of Lemma 5.2 and Theorem 5.8 which were developed with the help of ChatGPT Plus 5.5, closely following \cite{Szabo15}, and heavily revised by the authors, as well as two minor technical gaps in the proofs of Lemmas 3.2 and 4.2 which have been fixed using helpful comments by ChatGPT Plus 5.5.
The authors take full responsibility of the content of this paper.

\section{Dynamical comparison and thin boundaries}\label{sec-dyn-comp}
This section reviews the basic definitions relevant for the content of this paper, including dynamical comparison for groupoids. 
We also define the \emph{thin boundary property} for groupoids which is inspired by the \emph{topological small boundary property} from topological dynamics \cite{Lindenstrauss,Szabo15,KerrSzabo} and borrows its name from \cite{Buck}. 

A \emph{groupoid} is a small category all of whose morphisms are invertible. 
We identify a groupoid $G$ with its set of all morphisms. 
The \emph{unit space} $G^{(0)}\subset G$ is the set of all identity morphisms, which we identify with the set of objects.
We denote by
\[G^{(2)}\coloneqq \{(g,h)\mid s(g)=r(h)\}\subset G\times G\]
the set of \emph{composable pairs}. 
Equivalently, a groupoid is encoded by a set $G$ and a subset $G^{(0)}\subset G$ as above together with \emph{range}, \emph{source}, \emph{composition}, and (uniquely determined) \emph{inversion} maps
\[r,s\colon G\to G^{(0)},\quad -\cdot -\colon G^{(2)}\to G,\quad (-)^{-1}\colon G\to G\]
satisfying a natural set of axioms summarized by the word ``category''.

A \emph{topological groupoid} is a groupoid $G$ equipped with a topology such that the range, source, composition and inversion maps are continuous. 
A locally compact groupoid $G$ is called \emph{\'etale} if the range and source map are local homeomorphisms, i.e. every point $g\in G$ has an open neighbourhood $g\in B\subset G$ such that $r|_B\colon B\to r(B)$ and $s|_B\colon B\to s(B)$ are homeomorphisms onto their images. 
Such a set $B$ is called an \emph{open bisection}. 
The inverse semigroup of open bisections naturally acts on the unit space via homeomorphisms between open subsets of $G^{(0)}$:
\begin{nota}\label{notation-theta}
For an \'etale groupoid $G$ and an open bisection $B\subset G$, we denote by $\theta_B\colon s(B)\xrightarrow{\cong} r(B)$ the unique homeomorphism defined by $\{\theta_B(x)\}=r(s^{-1}(x)\cap B)$. 
For a subset $A\subset G^{(0)}$, we moreover write $\theta_B(A)\coloneqq \theta_B(A\cap s(B))$. 
\end{nota}

A subset $U\subset G^{(0)}$ is called \emph{invariant} if it satisfies $U=r(s^{-1}(U))$. 
We say that $G$ is \emph{minimal} if $\emptyset$ and $G^{(0)}$ are the only closed invariant subsets of $G^{(0)}$.

We recall the definition of dynamical comparison for \'etale groupoids. 
The property was defined by Winter and Buck \cite{Buck} and popularized in Kerr's work \cite{Kerr2020} on regularity properties of group actions. 
It alludes to the $C^*$-algebraic \emph{strict comparison of positive elements} in the context of the Toms--Winter conjecture \cite{Toms2009}. 
The definition for \'etale groupoids first appeared \cite{Ma2022} under the name \emph{groupoid comparison}. 
\begin{defi}[Dynamical comparison]
    Let $G$ be a Hausdorff \'etale groupoid. 
    \begin{enumerate}
        \item A closed subset $A\subset G^{(0)}$ is said to be \emph{dynamically below} an open subset $U\subset G^{(0)}$, written $A\prec_G U$ (or $A\prec U$ if $G$ is understood), if there are open bisections $B_1,\dotsc,B_n\subset G$ such that $A\subset s(B_1)\cup \dotsb \cup s(B_n)$ and $r(B_1),\dotsc,r(B_n)$ are pairwise disjoint subsets of $U$. 
        \item A \emph{$G$-invariant measure} on $G^{(0)}$ is a Borel probability measure $\mu$ on $G^{(0)}$ such that $\mu(s(B))=\mu(r(B))$ for all open bisections $B\subset G$. 
        \item Suppose furthermore that $G$ is minimal.\footnote{We restrict to minimal groupoids here since the correct definition of dynamical comparison in the non-minimal setting is slightly more complicated.} 
        Then $G$ is said to satisfy \emph{dynamical comparison} if for every closed set $A\subset G^{(0)}$ and every non-empty open set $U\subset G^{(0)}$ satisfying $\mu(A)<\mu(U)$ for all $G$-invariant Borel probability measures $\mu$ on $G^{(0)}$, we have $A\prec U$. 
        If, in addition, there are no $G$-invariant Borel probability measures on $G^{(0)}$, then we say that $G$ satisfies \emph{dynamical comparison with no invariant measures}. 
    \end{enumerate}
\end{defi}

In order to establish dynamical comparison for Deaconu--Renault groupoids with higher-dimensional unit space, we will also need the following regularity property for groupoids which we call the \emph{thin boundary property}. 
This property allows us to extend our proof of dynamical comparison for zero-dimensional Deaconu--Renault groupoids to the higher-dimensional case: Wherever the original arguments depend on partitioning some space into small clopen sets with prescribed properties, we instead partition it into small open sets ``up to small boundaries'' which can be ignored for our intents and purposes.
The precise sense of smallness for these boundaries is formalized in the definition below. 

The original \emph{small boundary property} for actions of amenable groups was introduced by Shub--Weiss \cite{Shub1991} and coined by Lindenstrauss \cite{Lindenstrauss}. 
Later, the stronger \emph{topological small boundary property} was introduced in the context of regularity properties of crossed product $C^*$-algebras \cite{Buck,Szabo15,KerrSzabo}. 
Although our notion of small boundaries philosophically mimics the latter property, it is formally weaker and thus requires a different name. 
Since the direct analogue in the setting of $\Z$-actions was called \emph{thin sets} in \cite{Buck}, we decided to stick to this terminology. 
\begin{defi}[Thin boundaries]\label{def-thin}
    Let $G$ be a Hausdorff \'etale groupoid. 
    \begin{enumerate}
    \item A closed subset $A\subset G^{(0)}$ is called \emph{$G$-thin}, if we have $A\prec U$ for every non-empty open subset $U\subset G^{(0)}$. 
    \item We say that $G$ has the \emph{thin boundary property} if there is a basis $\mathcal B$ for the topology of $G^{(0)}$ such that for each $U\in \mathcal B$, the boundary $\partial U$ is $G$-thin. 
    \end{enumerate}
\end{defi}

Note that every Hausdorff \'etale groupoid with totally disconnected unit space has the thin boundary property since its unit space has a basis consisting of clopen subsets.
The main technical result of this paper is that the thin boundary property holds more generally for finite-dimensional unit spaces:
\begin{theorem}\label{thm-Lebesgue-thin}
    Let $G$ be a minimal second countable Hausdorff \'etale groupoid with a compact unit space $G^{(0)}$ of finite Lebesgue covering dimension. Then $G$ has the thin boundary property.
\end{theorem}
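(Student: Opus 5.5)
We follow the general-position strategy of Lindenstrauss and Szab\'o, adapted to groupoids, and proceed in three steps.

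\emph{Step 1: a sufficient condition for thinness via minimality.} Let $d=\dim G^{(0)}$. By minimality and compactness, for every non-empty open $U\subset G^{(0)}$ there are finitely many open bisections $C_1,\dotsc,C_m$ with $r(C_i)\subset U$ and $s(C_1)\cup\dotsb\cup s(C_m)=G^{(0)}$. Indeed, every orbit is dense, so every $x$ is the source of some $g$ with $r(g)\in U$. The only problem is that the ranges $r(C_i)$ overlap. We therefore aim to show that a closed set $A$ is thin as soon as it is ``in general position'' with respect to the bisections. Concretely, suppose that for every finite family $\mathcal F$ of open bisections, every point $x\in G^{(0)}$ lies in $\theta_{B}^{-1}(A)$ for at most $d$ (or at most $d+1$) bisections $B\in\mathcal F$ with pairwise distinct arrows. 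Then any $d+2$ of the sets $\theta_{C_i}(A)$ have empty common intersection, with care taken about coinciding arrows. Using a finite-dimensional colouring argument, namely a refinement of an open cover of $A$ into $d+1$ families of disjoint sets, we can then split $A$ into closed pieces $A_1,\dotsc,A_k$. Each piece is moved by some $C_i$, with its image shrunk into pairwise disjoint open subsets of $U$. We first replace $U$ by $d+2$ disjoint non-empty open subsets $U_0,\dotsc,U_{d+1}\subset U$ (possible unless $G^{(0)}$ is finite, a case that is trivial). Pieces of colour $j$ are sent into $U_j$, and within one colour the images are disjoint by construction. The transversality is needed precisely so that each colour class has disjoint images.

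\emph{Step 2: producing a basis with boundaries in general position.} Fix a countable basis of $G^{(0)}$ and a countable family $\{B_n\}$ of open bisections whose union is $G$ and which is closed under finite products and inverses; this is possible by second countability. Following \cite{Szabo15} and \cite{Lindenstrauss}, we realise each basic open set as $\{f<1\}$ for a continuous $f\colon G^{(0)}\to[0,1]$. We perturb $f$ by a Baire category argument in $C(G^{(0)})$. For each finite tuple of bisections from the countable family, consider the set of functions $f$ such that the level set $\{f=1\}$ and its translates under these bisections have at most $d+1$-fold overlaps at distinct arrows. We show that this set is open and dense. Density is the dimension-theoretic input: $\dim G^{(0)}=d$ lets us perturb finitely many ``coordinates'' so that the map $x\mapsto (f(\theta_{B_1}^{-1}x),\dotsc,f(\theta_{B_k}^{-1}x))\in[0,1]^k$ avoids the codimension-$(d+1)$ set where $d+1$ coordinates equal $1$. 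This uses the standard fact that generic maps from a $d$-dimensional compact space into $\mathbb R^k$ miss finitely many affine subspaces of codimension $>d$. Intersecting over countably many conditions and countably many basic sets yields a basis $\mathcal B$ whose boundaries satisfy the hypothesis of Step 1.

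\emph{Step 3: the main obstacle, non-freeness.} The main obstacle is the absence of freeness. A point $x$ may be fixed by a nontrivial arrow, so that $\theta_{B_1}$ and $\theta_{B_2}$ agree near $x$ for distinct bisections, and then one cannot independently perturb the values $f(\theta_{B_1}^{-1}x)$ and $f(\theta_{B_2}^{-1}x)$. The plan is to formulate general position in terms of distinct \emph{points} $\theta_{B_i}^{-1}(x)$ rather than distinct bisections. Density is then proved only near tuples $(x,\dotsc)$ whose preimage points are pairwise distinct; those are the only ones that matter for the disjointness in Step 1, since coinciding points give the same piece of $A$. Locally, near $k$ distinct points we can choose disjoint neighbourhoods and perturb $f$ independently on each, which recovers the Lindenstrauss--Szab\'o argument. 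Compactness then patches these local perturbations together via finitely many such neighbourhoods, and this patching is where I expect the real technical work.
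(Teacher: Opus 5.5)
\paragraph{The gap in Step~1.}
Your colouring refines a cover of $A$ into $d+1$ families whose members have pairwise disjoint closures, so within one colour the \emph{sources} of the pieces are disjoint. But pieces of colour $j$ are moved into $U_j$ by different bisections, and nothing prevents their \emph{images} from overlapping. A bound on the multiplicity of overlaps of translates at distinct points does not exclude two-fold overlaps. Already for $d=2$, a boundary in general position is at most one-dimensional, and two translates of it with disjoint sources meet in a zero-dimensional set that is typically non-empty. So ``within one colour the images are disjoint by construction'' is false; tellingly, the spare region among $U_0,\dotsc,U_{d+1}$ plays no role in your argument.

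The missing idea is an induction on collision depth. The paper defines classes $\mathcal{C}_m$, with $\mathcal{C}_{-1}=\{\emptyset\}$, of compact sets $K$ such that every collision set $\theta_{B_0}(K\cap F_0)\cap\theta_{B_1}(K\cap F_1)$ with $F_0,F_1$ disjoint compact lies in $\mathcal{C}_{m-1}$. It then proves by induction on $m$ that closed sets in $\mathcal{C}_m$ are thin (Lemma~\ref{lem:finiteDimThin}). The same-colour collision sets are thin by the inductive hypothesis. The parts of the pieces mapping into small neighbourhoods of these collisions are therefore sent into a reserved region $R$ instead, and after that the remaining images really are disjoint. Separately, Lemma~\ref{lem:ctrl_disjnt_fin_rank} shows that if every $d+1$ separated translates of $E$ have empty intersection, then $E\in\mathcal{C}_{d-1}$. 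The reason is that a point lying in $m$ separated translates of a collision set gives $m+1$ separated translates of $E$ meeting at that point.

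\paragraph{A related problem in Steps~2--3.}
The set of $f$ for which the translates of $\{f=1\}$ have bounded overlap at \emph{pairwise distinct} points is not open, because bad witnesses with distinct points can converge to coinciding points. So the Baire argument does not run as stated. Moreover, the coordinates $f\circ\theta_{B_i}^{-1}$ can only be perturbed independently where the relevant source points are uniformly separated, and your local patching does not yet provide this.

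Your instinct to work with distinct points is the right one, and the paper makes it rigorous as follows.
\begin{itemize}
\item It uses \emph{controlled bisections} $(B,F)$ with $F\subset s(B)$ compact, drawn from a countable basis of bisections and a countable family $\mathcal K$ of compact sets. A family is \emph{separated} if the sets $F$ are pairwise disjoint.
\item For a fixed finite set of these, Lemma~\ref{lem:bdryGeneralPosition} produces $W$ with $\partial W$ in general position. It does this by Szab\'o's inductive enlargement, choosing boundaries that avoid zero-dimensional $F_\sigma$-sets, rather than by a Baire perturbation.
\item Emptiness of the $(d+1)$-fold separated intersections passes to a closed neighbourhood by Proposition~\ref{prop:cptfact}.
\item A diagonal argument over an enumeration of all controlled bisections yields $\partial W$ satisfying the emptiness condition for every separated family of size $d+1$. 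Hence $\partial W\in\mathcal{C}_{d-1}$, and $\partial W$ is thin by Lemma~\ref{lem:finiteDimThin}.
\end{itemize}
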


We postpone the proof of \autoref{thm-Lebesgue-thin} to \autoref{sec-dimension} and first derive the proof of our main results assuming \autoref{thm-Lebesgue-thin}.
Readers who are only interested in the case of zero-dimensional unit spaces can safely ignore \autoref{sec-dimension} and replace ``thin boundaries'' by ``empty boundaries'' for the remainder of this paper. 

\section{Local homeomorphisms with large fibers}\label{sec-loc-homeo}
Deaconu--Renault groupoids associated to local homeomorphisms were introduced and studied in \cite{Renault1980,Deaconu1995,AnantharamanDelaroche1997}. 
This section recalls the basic properties of Deaconu--Renault groupoids and reduces the main theorem to the special case of minimal surjective local homeomorphisms with ``large fibers'' in the sense that the preimage of any point has cardinality at least two. 
This special case is then treated in \autoref{sec-partial} assuming in addition the thin boundary property (\autoref{def-thin}). The latter assumption is reduced to finite covering dimension of the unit space in \autoref{sec-dimension}.

\begin{defi}
Let $X$ be a locally compact Hausdorff space, let $\dom(T)\subset X$ be an open subset and let $T\colon \dom(T)\to X$ be a local homeomorphism. 
For a non-negative integer $n\geq 0$, we recursively define its \emph{domain} $\dom(T^n)$ as $\dom(T^0)=X$ and $\dom(T^{n+1})=\{x\in \dom(T^n)\mid T^n(x)\in \dom(T)\}$.
The \emph{Deaconu--Renault groupoid} $G_T\subset X\times \Z\times X$ associated to $T$ is given by 
\[G_T\coloneqq \{(x,n-m,y)\mid  n,m\geq 0,x\in \dom(T^n),y\in \dom(T^m),T^n(x)=T^m(y)\},\]
with range, source, and multiplication maps given by 
\[r(x,n,y)=x,\quad s(x,n,y)=y,\quad (x,n,y)(y,m,z)=(x,n+m,z).\]
A basis of the topology of $G_T$ is given by the basic open sets 
\begin{equation}\label{eq-basis-dr}
    B_T(U,V,n,m)\coloneqq \{(x,n-m,y)\mid x\in U,y\in V, T^n(x)=T^m(y)\},
\end{equation}
where $n,m\geq 0$ are non-negative integers and $U\subset \dom(T^n),V\subset \dom(T^m)$ are open subsets such that $T^n|_U$ and $T^m|_V$ are injective. 
\end{defi}

A local homeomorphism $T\colon X\to X$ is called \emph{minimal} if the associated Deaconu--Renault groupoid $G_T$ is minimal. 
Equivalently, there is no nonempty proper closed subset $Y\subset X$ such that
\[
T(Y)\subset Y
\qquad\text{and}\qquad
T^{-1}(Y)\subset Y.
\]
If $T$ is surjective, the above conditions are equivalent to $T(Y)=Y=T^{-1}(Y)$.

The following key lemma states that, up to finite iteration, every surjective minimal local homeomorphism is either a homeomorphism or has ``large fibers''. 
\begin{lemma}\label{lem:unif_non_inj}
	Let $T\colon X\to X$ be a minimal surjective and non-injective local homeomorphism of a compact Hausdorff space. Then there is a positive integer $n\geq 1$ such that for every $x\in X$, the set $T^{-n}(x)$ has cardinality at least $2$. 
\end{lemma}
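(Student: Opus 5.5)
The plan is to study the fiber-cardinality function and then use compactness and minimality. First I show that $c(x)\coloneqq |T^{-1}(x)|$ is finite and locally constant. Each fiber $T^{-1}(x)$ is closed, hence compact, and it is discrete because $T$ is a local homeomorphism; so it is finite. Let $y_1,\dots,y_k$ be the preimages of $x$ and choose pairwise disjoint open neighbourhoods $V_i\ni y_i$ on which $T$ is injective. Then $T(V_i)$ is an open neighbourhood of $x$, so $c\geq k$ near $x$. The set $X\setminus\bigcup_i V_i$ is compact and disjoint from $T^{-1}(x)$, so its image is a closed set avoiding $x$; hence $c\leq k$ near $x$. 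Consequently $Y\coloneqq\{x\in X\mid c(x)=1\}$ is clopen. Since $T$ is surjective and non-injective, $Y\neq X$.

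Next I describe the bad set at level $n$. A point $x$ satisfies $|T^{-n}(x)|=1$ exactly when there is a unique backward chain $x=x_0,x_1,\dots,x_n$ with $T(x_{k+1})=x_k$. This happens iff $x_0,\dots,x_{n-1}\in Y$, where $x_{k+1}$ denotes the unique preimage of $x_k$. Surjectivity guarantees that $|T^{-n}(x)|\geq 1$ always. The set $Z_n\coloneqq\{x\mid |T^{-n}(x)|=1\}$ is closed; one way to see this is to apply the local constancy argument above to $T^n$, which is again a surjective local homeomorphism. The sets $Z_n$ are also decreasing in $n$. Suppose, for contradiction, that every $Z_n$ is nonempty. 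By compactness, $Z\coloneqq\bigcap_n Z_n$ contains a point $x$. Such a point has a unique infinite backward chain $(x_k)_{k\geq0}$. Every $x_k$ again lies in $Z$, so in particular $x_k\in Y$ for all $k$.

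The main step is to produce a nonempty proper closed subset contradicting minimality. I would take the $\alpha$-limit set
\[L\coloneqq\bigcap_{m\geq0}\overline{\{x_k\mid k\geq m\}},\]
which is nonempty by compactness and contained in $Z\subset Y$, since $Z$ is closed.
\begin{itemize}
\item $T(L)\subset L$ follows from continuity of $T$ and the relation $T(x_{k+1})=x_k$.
\item $T^{-1}(L)\subset L$: let $y\in L$, say $y$ is a cluster point of $(x_{k_i})$. Passing to a subnet, $x_{k_i+1}$ converges to some $w\in L$, and $T(w)=y$. Since $y\in Y$ has a unique preimage, every $z$ with $T(z)=y$ equals $w\in L$.
\end{itemize}
Hence $L$ is a nonempty closed subset with $T(L)\subset L$ and $T^{-1}(L)\subset L$. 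By minimality $L=X$, contradicting $L\subset Y\subsetneq X$. Therefore some $Z_n$ is empty, which is exactly the claim of the lemma.

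I expect the only delicate point to be this invariance argument for $L$. It requires using nets, or working in the metrizable case, to pass to convergent subsequences. It also requires checking that backward invariance genuinely uses $L\subset Y$, that is, uniqueness of preimages along the limit set.
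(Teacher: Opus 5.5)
Your proof is correct, and it reaches the same conclusion from the complementary side.

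\textbf{The paper's route.} The paper works with the open sets $L_n=\{x\mid |T^{-n}(x)|\geq 2\}$, which equal $X\setminus Z_n$ by surjectivity. It observes that $T(L_n)\subset L_{n+1}$, so $L=\bigcup_n L_n$ is forward invariant. It then checks that the open set $\widetilde L=\bigcup_{n\geq 1}T^{-n}(L)$ is nonempty and invariant in both directions. Minimality gives $\widetilde L=X$. Compactness gives $X=T^{-n}(L)$ for some $n$, surjectivity gives $X=L$, and compactness again gives $X=L_n$.

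\textbf{Your route.} You argue by contradiction with the closed sets $Z_n$. You extract a point with a unique infinite backward orbit and use its $\alpha$-limit set as the forbidden closed invariant set. The inclusion $L\subset Y$ is exactly what upgrades ``every point of $L$ has a preimage in $L$'' to $T^{-1}(L)\subset L$.

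\textbf{Comparison.} The paper's argument is purely set-theoretic: no points, limit sets or subnets are chosen, and the invariance of $\widetilde L$ is a two-line computation. Yours is more dynamical and names the obstruction concretely, namely a uniquely backward-extendable orbit. The price is a few extra checks, such as $x_k\in Z$ for all $k$, which follows from $T^{-n}(x_k)\subset T^{-(n+k)}(x)$, plus the limit argument.

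\textbf{Two small remarks.} First, only the lower-semicontinuity half of your first step is used. Closedness of $Z_n$ is the openness of $L_n$, the same fact the paper relies on; the bound $c\leq k$ near $x$, and hence clopenness of $Y$, is never needed beyond $Y\neq X$. Second, you can avoid nets in the non-metrizable case. Put $K_m=\overline{\{x_k\mid k\geq m\}}$. Then $T(K_{m+1})=K_m$, because $T(K_{m+1})$ is compact, hence closed, and contains $\{x_k\mid k\geq m\}$. For decreasing compact sets, images commute with intersections, so $T(L)=\bigcap_m T(K_{m+1})=L$.
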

\begin{proof}
	Since $T$ is a non-injective local homeomorphism, the set 
	\[L_n\coloneqq \{x\in X\mid |T^{-n}(x)|\geq 2\}\subset X\]
	is nonempty and open for every $n\geq 1$.  
    Surjectivity of $T$ implies that $L_n\subset L_{n+1}$, for all $n\geq 1$.
	By compactness it therefore suffices to show that 
    \[L\coloneqq \bigcup_{n\geq 1} L_n=X.\] 
	First, note that $T(L_n)\subset L_{n+1}$ for all $n\geq 1$, so that $T(L)\subset L$ and therefore also $L\subset T^{-1}(L)$. 
	Hence, the union 
	\[\widetilde L\coloneqq \bigcup_{n\geq 1} T^{-n}(L)\]
    is an increasing union.
	By definition, $T^{-1}\big(\widetilde L\big)= \bigcup_{n\geq 2} T^{-n}(L)\subset \widetilde L$.
	We get 
	\begin{align*}
		T\big(\widetilde L\big)= \bigcup_{n\geq 1}T(T^{-n}(L))\subset L\cup \widetilde L\subset T^{-1}(L)\cup \widetilde L=\widetilde L.
	\end{align*}
	Since $\widetilde L$ is open, invariant, and nonempty due to surjectivity and non-injectivity of $T$, we conclude from minimality that $\widetilde L=X$. 
	By compactness, $X=T^{-n}(L)$ for some $n\geq 1$.
    By surjectivity of $T$, we conclude that $X=T^n(X)=L$. 
\end{proof}

\begin{lemma}\label{lem:min_n_set}
	Let $T\colon X\to X$ be a minimal surjective local homeomorphism of a compact Hausdorff space. 
	Let $n\geq 1$ be a positive integer.
    Then there is a nonempty clopen set $A\subset X$ 
    such that $A=T^n(A)=T^{-n}(A)$ and such that the restricted local homeomorphism $T^n|_A\colon A\to A$ is surjective and minimal.
\end{lemma}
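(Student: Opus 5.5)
Fix $n\geq 1$. The plan is to take $A$ to be a minimal nonempty closed set that is invariant under $T^n$ in both directions, and then show it is automatically clopen. Consider the family $\mathcal F$ of nonempty closed sets $Y\subset X$ with $T^n(Y)\subset Y$ and $T^{-n}(Y)\subset Y$. By surjectivity of $T^n$, every $Y\in\mathcal F$ satisfies $Y=T^n(T^{-n}(Y))\subset T^n(Y)\subset Y$, hence $T^n(Y)=Y$; and $T^{-n}(Y)\supset Y$ holds because $T^n(Y)\subset Y$, so also $T^{-n}(Y)=Y$. $\mathcal F$ contains $X$, and by compactness it is closed under intersections of decreasing chains (a nested intersection of nonempty compact sets is nonempty, and both invariance conditions pass to intersections; for $T^n(\bigcap Y_i)\subset \bigcap T^n(Y_i)$ this is immediate). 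Zorn's lemma then gives a minimal element $A\in\mathcal F$. By construction $T^n|_A\colon A\to A$ is surjective. Minimality of $T^n|_A$ as a local homeomorphism of $A$ will follow once we know $A$ is open: then $T^n|_A$ is a local homeomorphism, and any closed $Y\subset A$ invariant under $T^n|_A$ in both directions has $(T^n)^{-1}(Y)=(T^n|_A)^{-1}(Y)$ because $T^{-n}(A)=A$. Hence $Y\in\mathcal F$, and minimality of $A$ in $\mathcal F$ forces $Y=A$.

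The main point is showing $A$ is open, and here we use $T$-minimality. Consider the sets $A_k\coloneqq T^k(A)$ for $k=0,\dots,n-1$. Each is closed, being the continuous image of a compact set, and $A_k\in\mathcal F$: indeed $T^n(T^k A)=T^k(T^nA)=A_k$, and
\[
T^{-n}(T^kA)\subset T^k(T^{-n}A)=A_k,
\]
since if $T^n y=T^k a$, then by surjectivity we can write $a=T^n a'$ with $a'\in A$ (using $T^n(A)=A$) and set $y'=T^{n-k}\dots$. More cleanly, $T^{-n}(T^kA)=T^k(T^{-n}A)$ follows from surjectivity and the commutation $T^k T^{-n}\subset T^{-n}T^k$, which I would check carefully. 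Moreover $A_k$ is a minimal element of $\mathcal F$: if $Y\subset A_k$ lies in $\mathcal F$, then $A\cap T^{-k}(Y)$ lies in $\mathcal F$ and is contained in $A$, so it equals $A$, whence $A_k=T^k(A)\subset Y$. Two minimal elements of $\mathcal F$ either coincide or are disjoint, since their intersection, if nonempty, lies in $\mathcal F$.

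Now set $Z\coloneqq A_0\cup\dots\cup A_{n-1}$. This set is closed, nonempty, satisfies $T(Z)\subset Z$ (using $T(A_{n-1})=A_0$), and satisfies $T^{-1}(Z)\subset Z$: indeed $T^{-1}(A_k)\subset T^{-n}(T^{n-1}A_k)=T^{-n}(A_{k-1 \bmod n})=A_{k-1}$. By minimality of $T$, $Z=X$. So $X$ is a finite union of closed sets $A_k$ which are pairwise equal or disjoint. Hence $X$ is a finite disjoint union of the distinct $A_k$, each of which is therefore clopen; in particular $A$ is clopen. The step I expect to need care is the inclusion $T^{-n}(A_k)\subset A_k$, which I would derive as above from $T^{-1}(A_k)\subset A_{k-1}$. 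That inclusion in turn follows from $T^{-n}(A)=A$ by writing $T^{-1}(T^kA)=T^{-1}(T^k(T^{-n}\,\cdot))$ and using surjectivity; if this is awkward, I would instead verify directly that $T^{-1}(Z)\subset Z$ via $T^{-1}(Z)\subset T^{-n}(T^{n-1}Z)$ with $T^{n-1}Z\subset Z$.
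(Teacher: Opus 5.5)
Your route differs from the paper's and is fixable, but as written it has a genuine gap, exactly at the step you flagged. The claim $A_k=T^k(A)\in\mathcal F$, i.e.\ $T^{-n}(T^k(A))\subset T^k(A)$, is load-bearing: both $T^{-1}(Z)\subset Z$ and the ``equal or disjoint'' argument need it. None of your justifications establishes it.

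\begin{itemize}
\item The commutation $T^kT^{-n}\subset T^{-n}T^k$ only gives the easy inclusion $T^k(T^{-n}(S))\subset T^{-n}(T^k(S))$. The reverse inclusion fails for general $S$: for $z\mapsto z^2$ on the circle with $n=k=1$ and $S=\{s\}$, one has $T^{-1}(T(S))=\{s,-s\}$ but $T(T^{-1}(S))=\{s\}$.
\item Your direct attempt stalls, because $T^n(y)=T^n(T^k(a'))$ does not imply $y=T^k(a')$.
\item The fallback is circular: $T^{-1}(Z)\subset T^{-n}(T^{n-1}(Z))\subset T^{-n}(Z)$ still needs $T^{-n}(Z)\subset Z$, which is the same claim.
\item Likewise, you derive $T^{-1}(A_k)\subset A_{k-1}$ from $T^{-n}(A_{k-1})=A_{k-1}$, and then propose to derive the latter from the former.
\end{itemize}

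The missing idea is to rewrite forward images of $A$ as preimages. For any $S\subset X$ and $0\leq k\leq n$, surjectivity gives $T^k(T^{-n}(S))=T^{-(n-k)}(S)$. The inclusion $\subset$ is clear. Conversely, if $T^{n-k}(y)\in S$, choose $x$ with $T^k(x)=y$; then $T^n(x)\in S$. Applied to $S=A=T^{-n}(A)$, this gives $A_k=T^{-(n-k)}(A)$. Then $T^{-n}(A_k)=T^{-(n-k)}(T^{-n}(A))=A_k$ and $T^{-1}(A_k)=A_{k-1}$ (indices mod $n$) follow at once. So the equality you wanted holds for this particular $A$, just not for the reason you gave. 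With this fix the rest of your argument goes through. Your $A_k$ are then exactly the paper's sets $T^{-j}(A)$, and $Z$ is the paper's $C$; the paper avoids the issue by working with preimages from the start.

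Where you genuinely differ is in how clopenness is obtained.
\begin{itemize}
\item The paper applies the finite Baire category theorem to $X=C$ to find some $T^{-k}(A)$ with nonempty interior. It pushes this interior into $A$ using that $T^k$ is open, and then uses minimality of $T^n|_A$ together with $T^n(\mathrm{int}(A))=\mathrm{int}(A)=T^{-n}(\mathrm{int}(A))$ to conclude $\mathrm{int}(A)=A$.
\item You instead observe that distinct minimal elements of $\mathcal F$ are disjoint, so $X=Z$ is a finite partition into closed sets, each of which is therefore clopen.
\end{itemize}
Your version avoids Baire and the openness of $T$: apart from minimality, the clopenness argument uses only continuity and surjectivity. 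It also yields more, namely that $X$ is a disjoint union of finitely many clopen $T^n$-minimal sets that are cyclically permuted by $T$.
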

\begin{proof}
	By Zorn's Lemma, there is a nonempty closed set $A\subset X$ which is minimal for the local homeomorphism $T^n\colon X\to X$. 
    Indeed, consider the set
    \[\mathcal{F}\coloneqq \{ B\subset X\mid B \text{ is closed and nonempty, } T^n(B)\subset B, \text{ and } T^{-n}(B)\subset B\}. \]
    It is straightforward to verify that $\mathcal{F}$ is nonempty, and partially ordered by reverse inclusion. 
    By compactness of $X$, every chain in $\mathcal F$ has an upper bound. 
    It follows from Zorn's lemma that $\mathcal{F}$ admits a maximal element $A\in\mathcal{F}$. 
    
    Now, $T^n|_A \colon A\to A$ is a minimal local homeomorphism which is moreover surjective by surjectivity of $T$ and the fact that $T^{-n}(A)=A$.

	It remains to prove that $A$ is open. 
	Consider the closed set $C\coloneqq \bigcup_{k=0}^{n-1}T^{-k}(A)$.
    We claim that $C$ is $T$-invariant. Indeed, using at the last step that $T^{-n}(A)=A$, we have
    \[T^{-1}(C)=T^{-1}\left(\bigcup_{k=0}^{n-1}T^{-k}(A)\right)=\bigcup_{k=1}^nT^{-k}(A)=C. \]
    Moreover, using $T^n(A)=A$ and thus $T(A)\subset T^{-(n-1)}(A)$ at the last step, we obtain
    \[T(C)=T\left(\bigcup_{k=0}^{n-1}T^{-k}(A)\right)\subset \bigcup_{k=-1}^{n-2}T^{-k}(A)\subset \bigcup_{k=0}^{n-1}T^{-k}(A)=C. \]
    By minimality of $T$, it follows that $X=\bigcup_{k=0}^{n-1}T^{-k}(A)$.
    By (the finite version of) the Baire category theorem, there exists $0\leq k\leq n-1$ such that $T^{-k}(A)$ has nonempty interior.
    Since $T^k$ is a local homeomorphism, it follows that $A$ has non-empty interior which we denote by $\mathrm{int}(A)$.
    Using $T^n(A)=A=T^{-n}(A)$ and that $T^n$ is a local homeomorphism, we conclude that 
    \[T^n(\mathrm{int}(A))=\mathrm{int}(A)=T^{-n}(\mathrm{int}(A))\not=\emptyset.\]
    By minimality of $T^n|_A$, we conclude that $A=\mathrm{int}(A)$.
\end{proof}

\begin{lemma}\label{lem-open-subgroupoid}
    Let $G$ be a Hausdorff minimal \'etale groupoid with compact unit space and no isolated points.
    Suppose that there is a non-empty minimal open subgroupoid $H\subset G$ with compact unit space satisfying dynamical comparison with no invariant measures.
    Then $G$ satisfies dynamical comparison with no invariant measures.
\end{lemma}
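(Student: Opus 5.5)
The plan is to prove two things. First, $A\prec_G U$ for every closed $A\subset G^{(0)}$ and every non-empty open $U\subset G^{(0)}$. Second, this forces the absence of $G$-invariant measures. Together they give dynamical comparison with no invariant measures.

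\emph{No invariant measures.} Since $G^{(0)}$ has no isolated points and is Hausdorff, it has two disjoint non-empty open subsets $U_1,U_2$. Suppose $\mu$ is a $G$-invariant probability measure. If $G^{(0)}\prec U_j$ via bisections $B_1,\dots,B_n$, then
\[1\le \sum_l\mu(s(B_l))=\sum_l\mu(r(B_l))\le\mu(U_j).\]
Applying this for $j=1,2$ gives $\mu(U_1)+\mu(U_2)\ge 2$, which is impossible. So everything reduces to the first claim.

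\emph{Moving everything into $Y\coloneqq H^{(0)}$.} The set $Y$ is open in $G^{(0)}$ because $H$ is open, it is compact by hypothesis, and it is non-empty. For any non-empty open $W\subset G^{(0)}$, the saturation $r(s^{-1}(W))$ is an open invariant non-empty set, so by minimality of $G$ it equals $G^{(0)}$. Applying this with $W=Y$: every $x\in G^{(0)}$ is the source of some open bisection $B_x$ with $r(B_x)\subset Y$. By compactness of $G^{(0)}$ we obtain finitely many open bisections $B_1,\dots,B_k$ with $G^{(0)}=\bigcup_i s(B_i)$ and $r(B_i)\subset Y$.

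By the shrinking lemma on the compact Hausdorff space $A$, we choose closed sets $A_i\subset A\cap s(B_i)$ covering $A$. Then $K_i\coloneqq\theta_{B_i}(A_i)$ is a compact, hence closed, subset of $Y$. Applying the saturation argument with $W=U$, we find an open bisection $C$ with $s(C)\subset U$ and $r(C)\subset Y$ non-empty. Set $V\coloneqq r(C)$. As $V$ is a non-empty open set in a Hausdorff space without isolated points, it contains $k$ pairwise disjoint non-empty open sets $V_1,\dots,V_k$.

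\emph{Using comparison in $H$ and composing.} Dynamical comparison with no invariant measures for $H$ gives $K_i\prec_H V_i$ for each $i$. So there are open bisections $D_{i,1},\dots,D_{i,m_i}\subset H$ whose sources cover $K_i$ and whose ranges are pairwise disjoint subsets of $V_i$. These are also open bisections of $G$, since $H$ is open. Consider the products $E_{i,j}\coloneqq C^{-1}D_{i,j}B_i$, which are open bisections of $G$. Their sources contain $\theta_{B_i}^{-1}(s(D_{i,j})\cap r(B_i))$, and these cover $A_i$, so the $E_{i,j}$ cover $A$. Their ranges are $\theta_{C^{-1}}(r(D_{i,j}B_i))$. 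These sets are pairwise disjoint, because the sets $r(D_{i,j})$ are pairwise disjoint inside the disjoint $V_i\subset V$ and $\theta_{C^{-1}}$ is injective on $V$. They also lie in $s(C)\subset U$. Hence $A\prec_G U$.

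The only step requiring care is this bookkeeping of sources and ranges of composite bisections. In particular, the ranges $r(D_{i,j})$ must land inside $V=\operatorname{dom}\theta_{C^{-1}}$ so that injectivity applies, and this is guaranteed by choosing $V_i\subset V$. I do not expect a genuine obstacle beyond that.
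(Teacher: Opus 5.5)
Your proof is correct and follows the paper's argument: cover $G^{(0)}$ by finitely many bisections landing in $H^{(0)}$, use comparison in $H$ to push into disjoint pieces of a set that a single bisection carries into $U$, and compose. The differences are cosmetic. The paper takes $A=G^{(0)}$ and applies $H^{(0)}\prec_H U_i$ directly, so your shrinking to $A_i$ and the compact sets $K_i$ are unnecessary but harmless. You also spell out the no-invariant-measures deduction, which the paper only asserts.
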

\begin{proof}
    Since $G$ is minimal, there exist open bisections $B_0,\dotsc,B_m\subset G$ satisfying $r(B_i)\subset H^{(0)}$ for all $i=0,\dotsc,m$ and
    $G^{(0)}\subset \bigcup_{i=0}^ms(B_i)$.
    Let $U\subset G^{(0)}$ be a non-empty open subset.
    Using minimality of $G$ again, there is an open bisection $C\subset G$ such that $\emptyset \not= s(C)\subset H^{(0)}$ and $r(C)\subset U$. 
    Since $G^{(0)}$ has no isolated points, there are pairwise disjoint non-empty subsets $U_0,\dotsc,U_m\subset s(C)$.
    Using dynamical comparison for $H$, we can find for each $i=0,\dotsc,m$ open bisections 
    $D_{i,0},\dotsc,D_{i,n_i}\subset H$ satisfying $H^{(0)}=\bigcup_{j=0}^{n_i}s(D_{i,j})$ such that $r(D_{i,0}),\dotsc,r(D_{i,n_i})\subset U_i$ are pairwise disjoint. 
    It follows that $\{CD_{i,j}B_i\mid i=0,\dotsc,m,j=0,\dotsc,n_i\}$ is a set of open bisections satisfying
    \[G^{(0)}\subset \bigcup_{i=0}^ms(B_i)=\bigcup_{i=0}^m\bigcup_{i=0}^{n_i}s(D_{i,j}B_i)=\bigcup_{i=0}^m\bigcup_{i=0}^{n_i}s(CD_{i,j}B_i)\]
    such that the sets $r(CD_{i,j}B_i)\subset U$ for $i=0,\dotsc,m$ and $j=0,\dotsc,n_i$ are pairwise disjoint.
    This proves $G^{(0)}\prec U$. 
    It moreover follows there are no $G$-invariant Borel probability measures on $G^{(0)}$. 
\end{proof}

By combining the above lemmas, we obtain the following reduction of the main theorem.

\begin{cor}\label{cor-reduction}
    The following are equivalent.
    \begin{enumerate}
        \item\label{item1reduction} For every compact metrizable space $X$ of finite Lebesgue covering dimension and every minimal surjective local homeomorphism $T\colon X\to X$, the Deaconu--Renault groupoid $G_T$ satisfies dynamical comparison;
        \item\label{item2reduction} For every compact metrizable space $X$ of finite Lebesgue covering dimension and every minimal surjective local homeomorphism $T\colon X\to X$ satisfying $|T^{-1}(x)|\geq 2$ for all $x\in X$, the Deaconu--Renault groupoid $G_T$ satisfies dynamical comparison with no invariant measures. 
    \end{enumerate}
\end{cor}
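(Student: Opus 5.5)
The implication \ref{item1reduction}$\Rightarrow$\ref{item2reduction} needs a short argument. Let $T$ be as in \ref{item2reduction}. By \ref{item1reduction}, $G_T$ has dynamical comparison, so it suffices to show that there are no $G_T$-invariant probability measures. Suppose $\mu$ is one. Cover $X$ by finitely many open sets $V_i$ on which $T$ is injective. The bisections $B_T(V_i,T(V_i),1,0)$ give $\mu(V_i)=\mu(T(V_i))$, and after refining to a Borel partition this yields $\mu(T^{-1}(E))=\mu(E)\cdot(\text{fiber count})$ in the following sense: for Borel $E$, $\mu(T^{-1}(E))=\int_E |T^{-1}(x)|\,d\mu(x)\geq 2\mu(E)$. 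With $E=X$ this gives $1\geq 2$, a contradiction. I will carry out this step by partitioning $X$ into finitely many Borel pieces on which $T$ is injective.

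For \ref{item2reduction}$\Rightarrow$\ref{item1reduction}, let $T$ be minimal and surjective on $X$ of finite dimension. If $T$ is injective, it is a minimal homeomorphism, and $G_T$ is the transformation groupoid of a $\Z$-action. In that case dynamical comparison is known: by \cite{Naryshkin22} for polynomial growth groups, which covers $\Z$. When $X$ is finite, I will check directly that minimality forces $X$ to be a single periodic orbit; then $G_T$ is a transitive finite groupoid, and comparison follows by counting, since the invariant measure is uniform.

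If $T$ is non-injective, \autoref{lem:unif_non_inj} gives $n\geq1$ with $|T^{-n}(x)|\geq2$ for all $x$. Then \autoref{lem:min_n_set} provides a nonempty clopen $A$ with $T^n|_A\colon A\to A$ surjective and minimal. Its fibers still have at least two points, because $T^{-n}(A)=A$ means that all $T^n$-preimages of points of $A$ lie in $A$. Also, $A$ is compact metrizable with $\dim A\leq \dim X<\infty$. Applying \ref{item2reduction} to $T^n|_A$, the groupoid $G_{T^n|_A}$ has dynamical comparison with no invariant measures.

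To conclude, I identify $G_{T^n|_A}$ with an open subgroupoid $H$ of $G_T$. Concretely, $H=\{(x,nk,y)\in G_T\mid x,y\in A,\ k\in\Z\}$ up to the identification $(x,k,y)\mapsto(x,nk,y)$, where one checks that $T^{na}(x)=T^{nb}(y)$ matches the defining relations. This $H$ is open because $A$ is clopen and the degree map is continuous. Its unit space $A$ is compact, and $H$ is minimal. Since $T$ is non-injective, $G_T^{(0)}=X$ has no isolated points: an isolated point would, by minimality, make $X$ a finite orbit, and then a surjective self-map of a finite set would be injective. Now \autoref{lem-open-subgroupoid} shows that $G_T$ satisfies dynamical comparison with no invariant measures, and hence satisfies dynamical comparison.

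The main obstacle is the identification of $G_{T^n|_A}$ with an open subgroupoid of $G_T$ that carries the same topology. Here I expect to compare basic open sets \eqref{eq-basis-dr}: $B_{T^n|_A}(U,V,a,b)$ corresponds to $B_T(U,V,na,nb)$ for $U,V\subset A$.
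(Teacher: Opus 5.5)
Your proposal is correct and follows the same route as the paper. For \ref{item1reduction}$\Rightarrow$\ref{item2reduction} you use the same measure-doubling contradiction, only partitioning the domain into injectivity pieces where the paper picks two inverse branches over a partition of the range; for the converse you use the known result for the injective case followed by \autoref{lem:unif_non_inj}, \autoref{lem:min_n_set} and \autoref{lem-open-subgroupoid}. Your explicit embedding $(x,k,y)\mapsto(x,nk,y)$ of $G_{T^n|_A}$ onto the clopen subgroupoid $\{(x,k,y)\in G_T\mid x,y\in A,\ k\in n\Z\}$, and your check that $T^n|_A$ still has fibers of size at least two, spell out details the paper leaves implicit.
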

\begin{proof}
    Suppose first that \ref{item2reduction} holds. 
    Let $X$ be a compact metrizable space of finite Lebesgue covering dimension.
    Let $T\colon X\to X$ be a minimal surjective local homeomorphism. 
    If $T$ is injective, then dynamical comparison follows from \cite{Downarowicz2023,Naryshkin22}.
    Suppose that $T$ is not injective. By \autoref{lem:unif_non_inj}, there is an integer $n\geq 1$ such that $|T^{-n}(x)|\geq 2$ for all $x\in X$. 
    By \autoref{lem:min_n_set}, there is a compact open subset $A\subset X$ such that $T^n|_A\colon A\to A$ is a surjective minimal local homeomorphism. 
    Moreover, $\dim(A)\leq \dim(X)<\infty$ since $A\subset X$ is closed. 
    Thus, $G_{T^n|_A}\subset G_T$ is an open subgroupoid satisfying dynamical comparison with no invariant measures by assumption \ref{item2reduction}. 
    Since $T^n|_A$ is surjective and non-injective, $A$ and thus $X$ must be infinite. Minimality and compactness thus imply that $X$ has no isolated points.
    It follows from \autoref{lem-open-subgroupoid} that $G_T$ satisfies dynamical comparison with no invariant measures, verifying \ref{item1reduction}.

    Now suppose that \ref{item1reduction} holds and that $T\colon X\to X$ is a minimal surjective local homeomorphism such that $|T^{-1}(x)|\geq 2$ for all $x\in X$.
    We have to prove that $G_T$ has no invariant Borel probability measures. 
    Assume by contradiction that there is a $G_T$-invariant Borel probability measure $\mu$ on $X$.
    By assumption, for every $y\in X$, there exist distinct $x^1,x^2\in X$ with $T(x^1)=y=T(x^2)$. 
    Since $T$ is a local homeomorphism, we can find an open neighbourhood $y\in V_y\subset X$ and disjoint open neighbourhoods $x^1\subset U^1_y\subset X$ and $x^2\subset U^2_y\subset X$ such that $T|_{U^i_y}\colon U^i_y\xrightarrow{\cong} V_y$ is a homeomorphism. 
    By compactness, we can find $y_0,\dotsc,y_n\in X$ such that $X\subset V_{y_0}\cup \ldots \cup V_{y_n}$. 
    Define pairwise disjoint Borel sets $V_j\coloneqq V_{y_j}\setminus (V_{y_0}\cup \ldots \cup V_{y_{j-1}})$ and pairwise disjoint Borel sets $U^i_j\coloneqq T|{U^i_{y_j}}^{-1}(V_j)$ for $i=1,2$ and $j=0,\dotsc,n$. 
    Then we have 
    \[\mu(X)\geq \sum_{j=0}^n(\mu(U^1_j)+\mu(U^2_j))=2\sum_{j=0}\mu(V_j)=2\mu(X),\]
    contradicting $\mu(X)=1$.
\end{proof}

\section{Embedding partial actions of free groups}\label{sec-partial}
In this section, we prove the main theorem for local homeomorphisms with ``large fibers'' (see \autoref{cor-reduction}), using the presence of the thin boundary property from \autoref{thm-Lebesgue-thin}. 
We moreover give a more direct proof in the zero-dimensional case without using the thin boundary property.
Our main technical input is a variation of the ``paradoxical towers'' technique from \cite{Gardella2023} for partial actions of non-abelian free groups with ``large domains''. 
The construction of the specific paradoxical towers needed here explicitly uses the north-south dynamics of the action of the free group on its Gromov boundary. 
Since this does not pose additional difficulty and might be of independent interest, we formulate our results for general Gromov-hyperbolic groups. 
For our application to Deaconu-Renault groupoids however, only non-abelian free groups will play a role. 

Recall that a finitely generated group $\Gamma=\langle S\rangle$ is called \emph{hyperbolic} if its Cayley graph $\mathrm{Cay}(\Gamma,S)$ is Gromov-hyperbolic as a metric space. 
To each hyperbolic group $\Gamma$, one associates a \emph{Gromov boundary} $\partial \Gamma$ which is a compact metrizable $\Gamma$-space defined by equivalence classes of quasi-geodesic rays in $\mathrm{Cay}(\Gamma,S)$. 
Both hyperbolicity and the Gromov boundary are independent of the choice of $S$. 
An element $g\in \Gamma$ is called \emph{loxodromic} if its action on $\partial \Gamma$ has exactly two fixed points. 
The fixed point of a loxodromic element $g$ consist of an \emph{attracting fixed point} $g^+\in \partial \Gamma$ and a \emph{repelling fixed point} $g^-\in \partial \Gamma$ with the property that for every open neighbourhood $U^-$ of $g^-$ and every open neighbourhood $U^+$ of $g^{+}$, we have $g^n(\partial \Gamma\setminus U^-)\subset U^+$ for sufficiently large positive integers $n\geq 1$. 
Two loxodromic elements are called \emph{independent} if their sets of fixed points are disjoint.
Every hyperbolic group $\Gamma$ has a maximal finite normal subgroup which is called the \emph{finite radical}. 
A hyperbolic group $\Gamma$ is called \emph{non-elementary} if it is not virtually cyclic (equivalently if it is non-amenable).
We summarize the properties of the Gromov boundary relevant for this paper below and refer the reader to \cite{Bridson1999} for a detailed overview. 

\begin{lemma}\label{lem-hyperbolic-properties}
    Let $\Gamma$ be a non-elementary hyperbolic group with trivial finite radical. 
    Then 
    \begin{enumerate}
        \item\label{item-hyperbolic1} The action $\Gamma\curvearrowright \partial \Gamma$ is topologically free, i.e. for every $1\not=g\in \Gamma$, the set $\{x\in \partial \Gamma\mid gx=x\}\subset \partial \Gamma$ has empty interior;
        \item\label{item-hyperbolic2} If $\Gamma=\langle a,b\rangle$ is a free group on two generators and $\langle a,b\rangle^+$ denotes the semigroup generated by $a$ and $b$, then the sets $\{w^+ \mid w\in \langle a,b\rangle^+\}\subset \partial \Gamma$ and  $\{w^- \mid w\in \langle a,b\rangle^+\}\subset \partial \Gamma$ are infinite.
    \end{enumerate}
\end{lemma}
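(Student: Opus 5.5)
For part (i), I will fix $1\neq g\in\Gamma$ and suppose, for contradiction, that the fixed point set of $g$ contains a nonempty open set $V\subset\partial\Gamma$. The standard fact I will use is that in a non-elementary hyperbolic group the fixed points $\{h^+ \mid h \text{ loxodromic}\}$ are dense in $\partial\Gamma$, because $\partial\Gamma$ is the limit set and the action on it is minimal. So $V$ contains the fixed points $h^\pm$ of some loxodromic $h$; if needed I replace $h$ by a conjugate so that both $h^+$ and $h^-$ lie in $V$. By north-south dynamics, the sets $h^{-n}V$ exhaust $\partial\Gamma\setminus\{h^-\}$, and the conjugates $h^{-n}gh^{n}$ fix $h^{-n}V$ pointwise. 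I will not run this through a full compactness argument. Instead I will use the cleaner route: for every loxodromic $h$ with $h^+\in V$, $g$ fixes $h^+$, so $g$ lies in the stabilizer of $h^+$, which is virtually cyclic and contains $\langle h\rangle$ with finite index. Taking two independent loxodromics $h_1,h_2$ with attracting points in $V$, I get that $g$ is in the intersection of the two stabilizers, and this intersection is finite. In particular $g$ has finite order. Then the elements of finite order that fix an open set pointwise form, for a given $V$, a subgroup $K_V$, and $K_V$ is finite because it lies inside that finite intersection. The union of the groups $gK_{V}g^{-1}=K_{gV}$ over the translates of $V$ generates a normal subgroup. Each of these generators acts trivially on its open set, and by minimality and compactness finitely many translates cover $\partial\Gamma$. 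To finish, I will use the known fact that the kernel of $\Gamma\curvearrowright\partial\Gamma$ is exactly the finite radical, and argue that any finite-order element fixing an open set lies in that kernel. For the last point I expect to cite the standard result (e.g.\ Ol'shanskii/Gromov) that elements fixing an open subset of $\partial\Gamma$ act trivially, rather than reprove it. This citation step is the main obstacle, and I would simply rely on the literature (\cite{Bridson1999} and standard references) for it.

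For part (ii), I will use the concrete model of $\partial F_2$ as the space of infinite reduced words in $a^{\pm1},b^{\pm1}$. A positive word $w\in\langle a,b\rangle^+$ is cyclically reduced and nontrivial, so it is loxodromic with $w^+=www\cdots$ and $w^-=w^{-1}w^{-1}\cdots$. The words $ab^n$ for $n\geq1$ have attracting points $ab^nab^n\cdots$, which are pairwise distinct infinite words, so the set of attracting points is infinite. Likewise the repelling points $b^{-n}a^{-1}b^{-n}a^{-1}\cdots$ are pairwise distinct, so the set of repelling points is infinite as well.
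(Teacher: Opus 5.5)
Your part (ii) is correct and differs from the paper, which only cites \cite[8.2.E]{Gromov}. In the model of $\partial F_2$ as infinite reduced words, positive words are cyclically reduced, so $(ab^n)^{+}=ab^nab^n\cdots$ and $(ab^n)^{-}=b^{-n}a^{-1}b^{-n}a^{-1}\cdots$, and these are visibly pairwise distinct. This gives a self-contained check at no cost.

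For part (i) the paper simply cites \cite[Proposition~4.1]{Abbott2019a}, and in the end so do you. The result you plan to cite (an element fixing a nonempty open subset of $\partial\Gamma$ acts trivially), together with the kernel of $\Gamma\curvearrowright\partial\Gamma$ being the finite radical, is precisely statement (i). So your preliminary reasoning is not what carries the proof. In particular, the step about the normal subgroup generated by the groups $K_{gV}$ contributes nothing. A group generated by elements that each fix some open set pointwise need not act trivially anywhere, and you never show this subgroup is finite.

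You can, however, close (i) with your own ingredients. Your argument shows that the pointwise stabilizer $K_W$ of any nonempty open $W\subset\partial\Gamma$ is finite, since it sits inside the intersection of the stabilizers of two independent loxodromic attracting points.
\begin{enumerate}
\item Take $1\neq g\in K_V$ and a loxodromic $h$ with $h^+\in V$. Pick a neighbourhood $U$ of $h^-$ with $h^+\notin\overline U$, and set $W\coloneqq\partial\Gamma\setminus\overline U$.
\item North--south dynamics of $h^{-1}$ gives $W\subset h^{-n}V$ for all large $n$. The conjugates $h^{-n}gh^{n}$ fix $h^{-n}V$ pointwise, so they all lie in the finite group $K_W$.
\item By pigeonhole, $h^{-n}gh^n=h^{-m}gh^m$ for some $n<m$, so $g$ commutes with $h^k$ where $k=m-n\geq 1$.
\item Hence $g$ fixes $\bigcup_{j\geq 0}h^{-kj}V\supseteq\partial\Gamma\setminus\{h^-\}$ pointwise. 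It therefore acts trivially on $\partial\Gamma$, fixing $h^-$ by continuity.
\item The kernel of the action is normal in $\Gamma$ and contained in the finite group $K_V$. It therefore lies in the trivial finite radical, forcing $g=1$.
\end{enumerate}
This makes (i) self-contained modulo standard facts about loxodromic elements in hyperbolic groups.
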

\begin{proof}
    See \cite[Proposition~4.1]{Abbott2019a} and \cite[8.2.E]{Gromov}.
\end{proof}

The following lemma is inspired by the construction of \emph{strong paradoxical towers} in $F_2$ as explained in \cite[Proposition~3.2]{Gardella2023}. 
The lemma does not generalize to arbitrary \emph{groups with paradoxical towers} as in \cite{Gardella2023} since we need to be able to choose the elements $h_i$ from a prescribed subsemigroup of $\Gamma$. 
\begin{lemma}\label{lem:paradoxical_towers}
    Let $\Gamma$ be a non-elementary hyperbolic group with trivial finite radical, and let $a,b\in \Gamma$ be independent loxodromic elements. 
    Let $F\subset \Gamma$ be a finite subset. Then there exist subsets $A_1,A_2,A_3\subset \Gamma$ and elements $h_1,h_2,h_3\in\langle a,b\rangle^+\subset \Gamma$ in the semigroup generated by $a$ and $b$ such that
	\begin{enumerate}
		\item the sets $gA_i$ for $g\in F$ and $i=1,2,3$ are pairwise disjoint, and
		\item the sets $\Gamma\setminus h_iA_i$ for $i=1,2,3$ are pairwise disjoint.
	\end{enumerate}
\end{lemma}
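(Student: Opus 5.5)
The plan is to realise the paradoxical towers geometrically in the compactification $\overline{\Gamma}=\Gamma\cup\partial\Gamma$, using north--south dynamics of suitable elements of $\langle a,b\rangle^+$. We use the standard fact that $\Gamma$ acts on $\overline{\Gamma}$ by homeomorphisms. We also use that the north--south dynamics of a loxodromic element $w$ holds on all of $\overline{\Gamma}$: for neighbourhoods $U^\pm$ of $w^\pm$ in $\overline{\Gamma}$ we have $w^{n}(\overline{\Gamma}\setminus U^-)\subset U^+$ and $w^{-n}(\overline{\Gamma}\setminus U^+)\subset U^-$ for $n$ large. Replacing $a,b$ by high powers $a^N,b^N$, ping-pong shows that $\langle a^N,b^N\rangle$ is free on these two generators and quasi-isometrically embedded, so its boundary embeds equivariantly into $\partial\Gamma$. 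Then \autoref{lem-hyperbolic-properties}\ref{item-hyperbolic2} shows that the sets of attracting and repelling fixed points of elements of $\langle a,b\rangle^+$ are infinite.

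\textbf{Step 1: choice of elements.} Choose $w_1,w_2,w_3\in\langle a,b\rangle^+$ loxodromic such that the six points $w_i^\pm$ are distinct. In addition we require the following separation condition for the repelling points $p_i\coloneqq w_i^-$: the $3|F|$ points $gp_i$, for $g\in F$ and $i=1,2,3$, are pairwise distinct. This amounts to $p_i\neq g^{-1}g'p_j$ for $i\neq j$, and $p_i$ not being fixed by any nontrivial $g^{-1}g'$ with $g,g'\in F$. We choose the $w_i$ inductively, avoiding at each stage finitely many forbidden points.

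\textbf{Step 2: neighbourhoods and powers.} Pick open neighbourhoods $V_i\ni p_i$ in $\overline{\Gamma}$ that are small enough that the sets $gV_i$, for $g\in F$ and $i=1,2,3$, are pairwise disjoint. This is possible by Step 1, Hausdorffness and continuity of the finitely many maps $g\in F$. Pick pairwise disjoint open neighbourhoods $W_i\ni w_i^+$. Put $A_i\coloneqq V_i\cap\Gamma$ and $h_i\coloneqq w_i^{n}$ with $n$ large. Then (i) holds by construction. Moreover $h_i^{-1}(\overline{\Gamma}\setminus W_i)\subset V_i$, hence $h_iV_i\supset\overline{\Gamma}\setminus W_i$. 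Intersecting with $\Gamma$ (which is $\Gamma$-invariant) gives $\Gamma\setminus h_iA_i\subset W_i\cap\Gamma$, and these sets are pairwise disjoint, which is (ii).

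\textbf{Main obstacle.} I expect the hardest part to be the separation condition of Step 1, namely ensuring that $p_i$ is not fixed by the finitely many nontrivial elements $g^{-1}g'$. Avoiding the countably many orbit coincidences $p_i= g^{-1}g'p_j$ is easy because the candidate set is infinite. However, a nontrivial torsion element may fix infinitely many boundary points, and \autoref{lem-hyperbolic-properties}\ref{item-hyperbolic1} only says its fixed set has empty interior. My first attempt would use the following: if $h\neq 1$ fixes $w^-$, then $h$ lies in the elementary stabiliser subgroup $E(w)$. I would show that infinitely many $w\in\langle a,b\rangle^+$ have pairwise distinct fixed-point pairs and pairwise different, commensurability-distinct subgroups $E(w)$. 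I would then argue that a fixed nontrivial $h$ can lie in only finitely many of these groups, using the triviality of the finite radical and the fact that $E(w)\cap E(w')$ is finite for independent $w,w'$. Failing that, I would take a Baire-category argument inside the closure of $\{w^-\}$, which is a perfect subset of $\partial\Gamma$, combined with the density of repelling fixed points there.
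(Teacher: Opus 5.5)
Your construction is the same as the paper's. You pass to powers so that $\langle a,b\rangle$ is free with $\partial\langle a,b\rangle\subset\partial\Gamma$, choose $w_i\in\langle a,b\rangle^+$ with $F$-separated repelling points and distinct attracting points, and put $h_i=w_i^n$. The paper takes $A_i=\{g\in\Gamma\mid gz\in U_i\}$ for a fixed $z\in\partial\Gamma$, which needs north--south dynamics only on $\partial\Gamma$. Your $A_i=V_i\cap\Gamma$ in $\overline{\Gamma}$ works equally well, and Step~2 is fine.

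The gap is Step~1, exactly where you placed it, and it cannot be closed: when $\Gamma$ has torsion, the separation condition, and in fact the lemma itself, can fail. Here is a counterexample.
\begin{itemize}
\item Let $\Gamma=(F_2\times\mathbb{Z}/2)*\mathbb{Z}$, let $a,b$ be free generators of $F_2$, let $s$ generate $\mathbb{Z}/2$, and let $F=\{1,s\}$.
\item $\Gamma$ is non-elementary hyperbolic and $a,b$ are independent loxodromics.
\item The finite radical is trivial. A finite normal subgroup $N$ is conjugate into, hence contained in, $A=F_2\times\mathbb{Z}/2$. So $N\subset A\cap tAt^{-1}=\{1\}$ for the generator $t$ of $\mathbb{Z}$.
\item Since $s$ commutes with every $h_i\in\langle a,b\rangle^+$, left-translating (ii) by $s$ shows that the sets $\Gamma\setminus h_isA_i$ are also pairwise disjoint.
\item Hence each $\gamma\in\Gamma$ lies in at least two of the $h_iA_i$ and in at least two of the $h_isA_i$. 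So $\gamma\in h_i(A_i\cap sA_i)$ for some $i$, whereas (i) forces $A_i\cap sA_i=\emptyset$.
\end{itemize}

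Geometrically, $s$ fixes $\partial\langle a,b\rangle$ pointwise, so it fixes every $w^-$. This defeats both of your fallbacks:
\begin{itemize}
\item $s\in E(w)$ for every $w\in\langle a,b\rangle\setminus\{1\}$. Triviality of the finite radical only excludes finite subgroups normalised by all of $\Gamma$, not finite subgroups normalised merely by $\langle a,b\rangle$.
\item The closure of $\{w^-\}$ lies in the closed set $\mathrm{Fix}(s)$, so no Baire argument there can help.
\end{itemize}

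The paper's justification of this step has the same defect. It argues that the fixed sets have empty interior by topological freeness and that $\{w^-\}$ has an accumulation point. But a closed set with empty interior can contain an infinite set together with its accumulation points.

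What does hold is the torsion-free case. This covers $\Gamma=F_d$, the only case used later in the paper. There every nontrivial element of $F^{-1}F$ fixes exactly two points of $\partial\Gamma$, so in Step~1 you only need $w_i^-$ and $w_i^+$ to avoid prescribed finite sets. Infinitude of $\{w^-\}$ and $\{w^+\}$ separately does not yet give this simultaneous avoidance, but the following does. The $w\in\langle a,b\rangle$ with $w^-$ or $w^+$ in a given finite set lie in finitely many maximal cyclic subgroups of the free group $\langle a,b\rangle$. Meanwhile the elements $a^kb$, $k\geq 1$, of $\langle a,b\rangle^+$ lie in pairwise distinct maximal cyclic subgroups. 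With torsion-freeness added as a hypothesis, your argument is complete.
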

\begin{proof}
    By Gromov's ping-pong argument, see \cite[8.2.F]{Gromov}%
    , after replacing $a$ and $b$ by sufficiently large positive powers, we may assume that the subgroup $\langle a,b\rangle\subset\Gamma$ is free and that the Gromov boundary of the free subgroup $\langle a,b\rangle\cong F_2$ embeds into $\partial \Gamma$. 
    It therefore follows from \autoref{lem-hyperbolic-properties}~\ref{item-hyperbolic2} that the sets
    $\{w^+ \mid w\in \langle a,b\rangle^+\}\subset \partial \Gamma$ and  $\{w^- \mid w\in \langle a,b\rangle^+\}\subset \partial \Gamma$ are infinite.
   
    We claim that there exist $x_1,x_2,x_3\in \langle a,b\rangle^+$ such that the points
    \[
        g x_i^- \in \partial \Gamma,
        \text{ for }  g\in F,\ i=1,2,3,
    \]
    are pairwise distinct, and such that $x_1^+,x_2^+,x_3^+$ are pairwise
    distinct.  
    Indeed, the set $\{w^- \mid w\in \langle a,b\rangle^+\}\subset \partial \Gamma$ is infinite and thus has an accumulation point. 
    Since the set points in $\partial \Gamma$ which are fixed by all $g\in F^{-1}F\setminus \{1\}$ has empty interior by \autoref{lem-hyperbolic-properties}~\ref{item-hyperbolic1}, its complement must contain some $x_1^-$ for some $x_1\in \langle a,b\rangle^+$. 
    Using additionally that the sets $\{w^+ \mid w\in \langle a,b\rangle^+\}\subset \partial \Gamma$ and $\{w^- \mid w\in \langle a,b\rangle^+\}\subset \partial \Gamma$ are infinite, the same argument allows us to inductively find $x_i\in \langle a,b\rangle ^+$ such that $x_i^-$ is not fixed by any $g\in F^{-1}F\setminus \{1\}$ and such that $x_i^-\notin \{gx_{j}^-\mid g\in F^{-1}F, j\leq j-1\}$ and $x_i^+\notin \{x_1^+,\dotsc,x_{i-1}^+\}$. 
    This proves the claim.

    Choose open neighbourhoods $U_i\subset \partial \Gamma$ of $x_i^-$, for
    $i=1,2,3$, such that the sets $\{gU_i\mid  g\in F, \ i=1,2,3\}$ are pairwise disjoint. 
    Also choose pairwise disjoint open neighbourhoods $V_i\subset \partial \Gamma$ of $x_i^+$, for $i=1,2,3$.
    Since each $x_i$ is loxodromic, there is
    a positive integer $N\geq 1$ such that
\[
        x_i^N(\partial \Gamma\setminus U_i)\subset V_i, \text{ for } i=1,2,3.
\]
    Set $h_i=x_i^N$, for $i=1,2,3$.
    Since the $V_i$'s are pairwise disjoint, the sets
    \[\{\partial \Gamma\setminus h_iU_i\mid i=1,2,3\}\]
    are also pairwise disjoint.
    Fix $z\in \partial \Gamma$, and define
\[
        A_i\coloneqq \{g\in \Gamma \mid gz\in U_i\}, \text{ for }  i=1,2,3.
\]
    The sets $A_1,A_2,A_3$ and the elements $h_1,h_2,h_3$ satisfy the desired conditions.
\end{proof}

Our proof of dynamical comparison for Deaconu--Renault groupoids with ``large fibers'' uses the observation by de Castro and Steinberg (see \cite[Theorem~2.2]{Steinberg2026}) that Deaconu--Renault groupoids with zero-dimensional compact unit space can be realized by partial actions of free groups. 
Our strategy is to use the above lemma on paradoxical towers together with the techniques developed in \cite{Gardella2023} to prove a dynamical comparison result for partial actions of free groups. 
Here, the existence of two independent loxodromic elements with large domains corresponds exactly to the ``large fibers'' assumption for the Deaconu--Renault groupoid. 
In the case of Deaconu--Renault groupoids with higher-dimensional unit space, we cannot use the result by de Castro--Steinberg directly, but we can still \emph{embed} partial actions of free groups into the Deaconu--Renault groupoid. 
The purpose of the \emph{thin boundary property} is exactly to control the boundary of the embedded partial action. 

The definition of partial group actions goes back to Ruy Exel \cite{Exel1994}. 
\begin{defi}[Partial actions]
    Let $\Gamma$ be a discrete group and let $X$ be a locally compact Hausdorff space.
    A \emph{partial action} $ \Gamma \overset{\alpha}\curvearrowright X$ consists of a collection of open subsets $\{D_g\subset X\}_{g\in \Gamma}$ and homeomorphisms $\{\alpha_g\colon D_{g^{-1}}\to D_g\}_{g\in \Gamma}$ such that 
    \begin{enumerate}
        \item $D_1=X$ and $\alpha_1=\id$;
        \item For every $g,h\in \Gamma$ we have an inclusion $\alpha_g\circ\alpha_h\subset \alpha_{gh}$ of partially defined maps. 
        More precisely, $D_{h^{-1}}\cap \alpha_h^{-1}(D_{g^{-1}})\subset D_{(gh)^{-1}}$ and $\alpha_g\circ \alpha_h(x)=\alpha_{gh}(x)$ for all $x\in D_{h^{-1}}\cap \alpha_h^{-1}(D_{g^{-1}})$.
    \end{enumerate}
    The \emph{transformation groupoid} associated $\alpha$ is given by 
    \[\Gamma\ltimes X\coloneqq \bigcup_{g\in \Gamma}\{g\}\times D_{g^{-1}}\subset \Gamma\times X\]
    with unit space $X\cong \{1\}\times X$ and range, source, and multiplication maps given by 
    \[r(g,x)=gx, \quad s(g,x)=x,\quad (h,gx)(g,x)=(hg,x).\]
\end{defi}

A partial action $\Gamma \overset{\alpha}\curvearrowright X$ is called \emph{amenable} if the transformation groupoid $\Gamma\ltimes X$ is an amenable groupoid in the sense of \cite{AnantharamanDelaroche2000}. 
Equivalently, there is a net of continuous maps $\mu_i\colon X\to \Prob(\Gamma)$, where $\Prob(\Gamma)$ denotes the set of probability measures on $\Gamma$ with the $\ell^1$-topology, such that for every finite set $F\subset \Gamma$ and every set of compact subsets $\{K_g\subset D_g\}_{g\in F}$, we have 
\[\sup_{x\in K_g}\|\mu_i(g^{-1}x)-g^{-1}(\mu_i(x))\|_1\to 0,\text{ for all }g\in F.\]

Assume further that $\Gamma$ is a finitely generated group with word length $\ell$. A word $g=g_1 \dotsb g_n\in \Gamma$ is called \emph{reduced} if we have $\ell(g)=\ell(g_1)+\dotsb+\ell(g_n)$. 
A partial action $\Gamma\overset{\alpha}\curvearrowright X$ is called \emph{semi-saturated} if for every reduced word $g=g_1 \dotsb  g_n\in \Gamma$, we have $\alpha_{g_1}\circ \dotsb \circ \alpha_{g_n}=\alpha_g$. 
Denoting by $F_d=\langle a_1,\dotsc,a_d\rangle$ the free group on $d$ generators, any choice of homeomorphisms $T_i\colon U_i\xrightarrow{\cong}V_i, i=1,\dotsc,d$ for open subsets $U_1,V_1,\dotsc,U_d,V_d\subset X$ uniquely defines a semi-saturated partial action $F_d\overset{\alpha}\curvearrowright X$ by requiring $\alpha_{a_i}=T_i$ for all $i=1,\dotsc,d$. 
We moreover say that a partial action $F_d\overset{\alpha}\curvearrowright X$ is \emph{orthogonal} if the domains $D_{a_i^{-1}}$ for $i=1,\dotsc,d$ are pairwise disjoint.

Before proceeding with the main technical result of this section, we illustrate the main ideas of our main theorem by giving a short proof in the zero-dimensional case.
This is technically redundant since the zero-dimensional case can be deduced from the general case without referencing the zero-dimensional case. 
We do however include the proof for convenience of the reader and hope that it makes our techniques more accessible. 

\begin{theorem}
    Let $X$ be a totally disconnected compact Hausdorff space and let $T\colon X\to X$ be a minimal surjective local homeomorphism. Then $G_T$ satisfies dynamical comparison.
\end{theorem}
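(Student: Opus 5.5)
By \autoref{cor-reduction}, we only need the case $|T^{-1}(x)|\geq 2$ for all $x\in X$, and show that $G_T$ has dynamical comparison with no invariant measures. The totally disconnected case is covered by the same reduction, because \autoref{lem:min_n_set} produces a clopen subset, which is again totally disconnected. For a totally disconnected unit space, a non-empty closed subset can never have measure zero for every probability measure, so "no invariant measures" together with comparison means the following: for every non-empty open $U\subset X$ we must show $X\prec U$. Non-existence of invariant measures then follows exactly as in the second half of the proof of \autoref{cor-reduction}.

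\emph{Step 1: an orthogonal partial action of $F_d$ with two surjective generators.}
Since $T$ is a surjective local homeomorphism of a compact zero-dimensional space with fibres of size $\geq 2$, we can find a finite clopen partition $X=\bigsqcup_{j} W_j$ with each $T|_{W_j}$ injective. Choose a section of $T$ on each piece of a clopen partition of $X$, and refine so that two different preimages are selected. This gives disjoint clopen sets $P_1,P_2\subset X$ with $T|_{P_i}\colon P_i\to X$ a homeomorphism for $i=1,2$. Cover the rest $X\setminus(P_1\cup P_2)$ by further disjoint clopen sets $P_3,\dots,P_d$ on which $T$ is injective. Define the semi-saturated orthogonal partial action $F_d\curvearrowright X$ by $\alpha_{a_i}=T|_{P_i}\colon P_i\to T(P_i)$. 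Its transformation groupoid is an open subgroupoid of $G_T$, via $(a_i,x)\mapsto (T x,-1,x)$. Since $T$ itself is recovered on all of $X$, every bisection $B_T(U,V,n,m)$ is a finite union of pieces from this partial action. Hence the groupoids coincide, which is in essence de Castro–Steinberg. In particular the partial action is minimal and amenable, since $G_T$ is amenable. The generators $a=a_1^{-1}$ and $b=a_2^{-1}$ are globally defined, because $D_{a_1}$ and $D_{a_2}$ have domains $X$ and act injectively; more precisely, $\alpha_{a_1^{-1}}$ and $\alpha_{a_2^{-1}}$ are everywhere-defined homeomorphisms onto $P_1$ and $P_2$. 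Consequently every element of the semigroup $\langle a,b\rangle^+$ acts by a globally defined map $X\to X$.

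\emph{Step 2: paradoxical towers.} Fix a non-empty clopen $U$. Using amenability, choose a finite $F\subset F_d$, an $\varepsilon>0$ and continuous (locally constant, since $X$ is zero-dimensional) maps $\mu\colon X\to\Prob(F_d)$ with finite support, almost invariant on the relevant compact domains. Apply \autoref{lem:paradoxical_towers} with the independent loxodromics $a,b$ in $F_d$ and a finite set $F'$ containing $\bigcup_x\mathrm{supp}\,\mu(x)$ together with elements moving pieces of $X$ into $U$, which exist by minimality. This yields $A_1,A_2,A_3$ and $h_1,h_2,h_3\in\langle a,b\rangle^+$. Following \cite{Gardella2023}, use $\mu$ to pull the tower pattern back to $X$. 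Let $E_i$ be the clopen set of points $x$ where $\mu(x)$ gives mass more than $1/2$ (say) to $A_i$, after a finite partition on which $\mu$ is constant. Property (ii), which is where the $h_i$ being \emph{globally defined} is essential, gives $X=\bigcup_i h_i^{-1}$-translates covering. Property (i) gives disjointness of the $gE_i$, which lets us move the $E_i$ disjointly and compare against $U$ via minimality. Iterating this compression finitely many times yields open bisections witnessing $X\prec U$.

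\emph{Main obstacle.} The delicate point is Step 2: translating the group-level disjointness and covering of \autoref{lem:paradoxical_towers} into dynamical statements when only a \emph{partial} action is available. Translates $gE_i$ for $g\in F$ may be undefined. I would first handle this by restricting $\mu$'s supports to words whose domains contain the given points, using semi-saturation and orthogonality, so that reduced words act by composition. I would use the global definedness of $\langle a,b\rangle^+$ to guarantee that the covering condition (ii) holds everywhere.
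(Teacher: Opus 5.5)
Your architecture matches the paper's. You reduce via \autoref{cor-reduction} to fibres of size at least two, realize $G_T$ as $F_d\ltimes X$ for a semi-saturated orthogonal partial action in which two generators have range $X$, and run the paradoxical-towers argument with an almost equivariant $\mu\colon X\to\Prob(F_d)$. Step~1 is fine, but Step~2 does not close as written.

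\textbf{The order of choices is circular.} The covering step from property (ii) of \autoref{lem:paradoxical_towers} needs $\mu$ to be $\varepsilon$-equivariant for the tower elements themselves, uniformly on $X$. From $\mu(x)(h_iA_i)>\tfrac12+2\varepsilon$ you can only conclude $\mu(\alpha_{h_i^{-1}}(x))(A_i)>\tfrac12+\varepsilon$ if $\|\mu(\alpha_{h_i^{-1}}(x))-h_i^{-1}\mu(x)\|_1<\varepsilon$. You instead fix $\mu$ first and feed $\bigcup_x\mathrm{supp}\,\mu(x)$ into the finite set given to the lemma. Then the $h_i$ depend on $\mu$, so $\mu$ cannot have been chosen almost equivariant for them. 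The supports of $\mu$ play no role at all. The paper's order is:
\begin{itemize}
\item pick $F$ by minimality so that $X\subset\bigcup_{g\in F}\alpha_{g^{-1}}(D_g\cap U)$;
\item apply the lemma to this $F$;
\item only then use amenability to get $\mu$ almost equivariant for each $g\in F$ on $D_{g^{-1}}$ and for each $h_i^{-1}$ on $X$.
\end{itemize}
This also dissolves your ``main obstacle''. Amenability of the partial action already gives equivariance on the clopen, hence compact, domains. Moreover, $\alpha_g$ is only ever applied to $V_i\cap\alpha_{g^{-1}}(D_g\cap U)\subset D_{g^{-1}}$. So no restriction of the supports of $\mu$ is needed.

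\textbf{Your orientation is backwards.} With $a=a_1^{-1}$ and $b=a_2^{-1}$, the elements $h_i\in\langle a,b\rangle^+$ are globally defined. But the covering step moves an arbitrary point of $W_i$ by $\alpha_{h_i^{-1}}$, so what is needed is $D_{h_i}=X$, i.e.\ that $h_i^{-1}$ is globally defined. For your $h_i$, if the leftmost letter is $a_j^{-1}$, semi-saturation gives $D_{h_i}\subseteq D_{a_j^{-1}}=P_j\subsetneq X$. Then $\alpha_{h_i^{-1}}$ is undefined on the part of $W_i$ outside $P_j$, and nothing forces $W_i\subseteq D_{h_i}$. The paper applies the lemma to $a_1,a_2$ themselves, so that $h_i\in\langle a_1,a_2\rangle^+$ satisfies $D_{h_i}=D_{a_1}=D_{a_2}=X$.

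\textbf{The thresholds need $\varepsilon$-margins on both sides.} If $E_i$ is defined by mass $>\tfrac12$, you only get $\mu(\alpha_g(x))(gA_i)>\tfrac12-\varepsilon$. That does not make the sets $\alpha_g(E_i\cap\alpha_{g^{-1}}(D_g\cap U))$ pairwise disjoint. The paper uses $\tfrac12+\varepsilon$ for $V_i$ and $\tfrac12-2\varepsilon$ for $W_i$, with $\varepsilon=\tfrac1{13}<\tfrac1{12}$. After that, a single step $X\prec V_1\sqcup V_2\sqcup V_3\prec U$ suffices; no iteration is needed.
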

\begin{proof}
    By \autoref{cor-reduction}, we may assume\footnote{Although \autoref{cor-reduction} assumes metrizability, this assumption is not needed for the reduction itself. Metrizability is only used in \autoref{sec-dimension} for the higher-dimensional case.} that $|T^{-1}(x)|\geq 2$ for all $x\in X$. 
    By the assumptions on $T$, we may for every $x\in X$ find an open neighbourhood $V_x$ and pairwise disjoint open sets $U_x^i$ for $i=1,2$ such that $T$ restricts to homeomorphisms $T|_{U_x^i} \colon U_x^i\xrightarrow{\cong}V_x$. 
    By compactness of $X$, we may find finitely many $x_1,\dotsc,x_n\in X$ such that $X\subset V_{x_1}\cup \dotsc \cup V_{x_n}$. 
    Since $X$ is totally disconnected, we may moreover assume that the sets $V_{x_1},\dotsc,V_{x_n}$ are clopen and pairwise disjoint. 
    We define clopen sets 
    \[A_i\coloneqq \bigsqcup_{j=1}^nU_{x_j}^i,\text{ for }i=1,2.\]
    Since $T$ is a local homeomorphism and $X$ is a compact totally disconnected space, we can extend $A_1\sqcup A_2$ to a clopen partition $X=A_1\sqcup A_2\sqcup \ldots\sqcup A_d$ for some $d\geq 2$ such that $T|_{A_i}$ is injective for each $i=1,\ldots, d$. 
    Denote by $F_d=\langle a_1,\dotsc,a_d\rangle$ the free group on $d$ generators. 
    We define a semi-saturated orthogonal partial action $F_d\overset{\alpha}\curvearrowright X$ by declaring $\alpha_{a_i}=T|_{A_i}$. 
    It follows from the proof of \cite[Theorem~2.2]{Steinberg2026} that $G_T\cong F_d\ltimes X$. 
    In particular, $\alpha$ is minimal and amenable (see \cite[Corollary~9.7]{ExelSteinberg}) and it suffices to prove dynamical comparison for $\alpha$.

    We proceed with proving dynamical comparison for $\alpha$. 
    Let $\emptyset \not=U\subset X$ be an open set. 
    Since $X$ is zero-dimensional we may without loss of generality assume that $U$ is clopen. 
    By minimality, there is a finite set $F\subset F_d$ such that 
    \begin{equation}\label{eq-cover-by-F0}
         X \subset \bigcup_{g\in F}\alpha_{g^{-1}}(D_g\cap U).
    \end{equation}
    Without loss of generality, we may assume that $e\in F$.
    By \autoref{lem:paradoxical_towers} there are subsets $A_1,A_2,A_3\subset G$ and elements $h_1,h_2,h_3\in \langle a_1,a_2\rangle^+$ such that
	\begin{enumerate}
		\item the sets $gA_i$ for $g\in F$ and $i=1,2,3$ are pairwise disjoint and
		\item the sets $G\setminus h_iA_i$ for $i=1,2,3$ are pairwise disjoint.
	\end{enumerate}
    Moreover, denoting by $\{D_g\subset X\}_{g\in F_d}$ the domains of $\alpha$, we have $D_{h_i}=D_{a_1}=D_{a_2}=X$ for $i=1,2,3$.
    Let $\varepsilon\coloneqq \tfrac{1}{13}$. 
    By amenability of $\alpha$, there is a continuous map $\mu\colon X\to \Prob(F_d)$ such that 
    \[\sup_{x\in {D_{g^{-1}}}}\|\mu(\alpha_gx)-g\mu(x)\|_1<\varepsilon,\text{ for all }g\in F,\]
    and 
    \[\sup_{x\in X}\|\mu(\alpha_{h_i^{-1}}(x))-h_i^{-1}\mu(x)\|_1<\varepsilon, \text{ for all }i=1,2,3.\]
    Define 
    \[V_i\coloneqq \{x\in X\mid \mu(x)(A_i)>\tfrac{1}{2}+\varepsilon\}, \text{ for all }i=1,2,3, \]
    and
    \[W_i\coloneqq \{x\in X\mid \mu(x)(G\setminus h_iA_i)<\tfrac{1}{2}-2\varepsilon\}, \text{ for all } i=1,2,3.\]
    
    By assumption, the sets $G\setminus h_iA_i$ for $i=1,2,3$ are pairwise disjoint so that not all of them can have probability measure $>\frac 1 3$. 
    Since $\frac 1 3<\frac 1 2-2\varepsilon$, it follows that 
    \begin{equation}\label{eq-Wcover}
        X=W_1\cup W_2\cup W_3.
    \end{equation}
    By our choice of $\mu$ and $W_i$, we have $\mu(\alpha_{h_i^{-1}}(x))(A_i)>\frac 1 2 +\varepsilon$ for all $i=1,2,3$ and $x\in  W_i$ so that 
    \begin{equation}\label{eq-X<V0}
    \alpha_{h_i^{-1}}( W_i)\subset V_i,\text{ for all }i=1,2,3.
    \end{equation}
       
    Analogously, since the sets $gA_i\subset G$ for $g\in F$ and $i=1,2,3$ are pairwise disjoint, at most one of them can have probability measure $>\frac 1 2$.
    Since we chose $\mu$ and $V_i$ such that $\mu(\alpha_g(x))(gA_i)>\frac 1 2$ for all $g\in F, x\in V_i\cap \alpha_{g^{-1}}(D_g\cap U)$, and $i=1,2,3$, the sets 
    \begin{equation}\label{eq-V<U0}
        \alpha_g( V_i\cap  \alpha_{g^{-1}}(D_g\cap U))\subset U\text{ for }g\in F\text{ and }i=1,2,3
    \end{equation}
    are pairwise disjoint. 

    Now the combination of \eqref{eq-Wcover} and \eqref{eq-X<V0} proves $X\prec V_1\cup V_2\cup V_3$ while the combination of \eqref{eq-cover-by-F0} and \eqref{eq-V<U0} proves $V_1\cup V_2\cup V_3\prec U$.
    Thus, $X\prec U$ as desired.
\end{proof}

We are now ready to state the main technical result of this section which generalizes the main result of \cite{Gardella2023} for non-amenable hyperbolic groups.
It states that minimal, amenable \'etale groupoids which contain partial actions of non-amenable hyperbolic groups as a suitable ``large subgroupoid'' satisfy dynamical comparison. 
We refer to \autoref{def-thin} for the definition of $G$-thin subsets. 
\begin{theorem}\label{thm-large-hyperbolic-subgroupoid}
    Let $G$ be a minimal Hausdorff \'etale groupoid with compact unit space $G^{(0)}$ and no isolated points.
    Assume that there is an amenable partial action
    $\Gamma\overset{\alpha}\curvearrowright X$ of a hyperbolic group with trivial finite radical satisfying the following conditions.
    \begin{enumerate}
        \item $G$ contains $\Gamma\ltimes X$ as an open subgroupoid with $G^{(0)}=X$;
        \item\label{item-thin-basis} The topology of $G$ has a basis of relatively compact open bisections $B\subset G$ such that $r\left(\overline B\setminus (\Gamma\ltimes X)\right)$ is $G$-thin and such that $B$ only intersects finitely many bisections of the form $\{g\}\times D_{g^{-1}}$ for $g\in \Gamma$;
        \item There are independent loxodromic elements $a,b\in \Gamma$ such that $X\setminus D_a$ and $X\setminus D_b$ are $G$-thin subsets of $X$. 
    \end{enumerate}
    Then $G$ satisfies dynamical comparison. %
\end{theorem}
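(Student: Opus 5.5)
The approach is to mimic the zero-dimensional proof given just above, replacing clopen sets by open sets and absorbing every error set into finitely many $G$-thin sets. The goal is $X\prec_G U$ for every non-empty open $U\subset X$. This gives dynamical comparison with no invariant measures: an invariant measure would give $\mu(X)\le\mu(U)$ for all $U$, which is impossible since $X$ has no isolated points. We also need a way to discard thin sets, so the first step is an absorption lemma. If $A\prec U'$ for a closed $A$ and an open $U'\subset U$, and $Z$ is $G$-thin, then we want $A\cup Z\prec U$. To get it, shrink $U'$ to the range of the bisections used for $A$, find a non-empty open $U''\subset U\setminus\overline{U'}$ (no isolated points), and use thinness of $Z$ to get $Z\prec U''$. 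Since the two families of ranges are disjoint, the bisections combine. We expect to iterate this finitely many times, so we split $U$ in advance into finitely many disjoint non-empty open pieces, one for each thin set that appears.

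Fix a non-empty open $U$, a non-empty open $U_0$ with $\overline{U_0}\subset U$, and a spare open set. By minimality of $G$, choose finitely many relatively compact open bisections $B_1,\dots,B_k$ from the basis in \ref{item-thin-basis} with $r(B_j)\subset U_0$ and $X=\bigcup_j s(B_j)$. Each $B_j$ meets only finitely many $\{g\}\times D_{g^{-1}}$, so $B_j\cap(\Gamma\ltimes X)$ splits into the pieces $\{g\}\times(\text{open})$ for $g$ in a finite set $F\ni e$. The complementary part of $s(B_j)$ is controlled by $r(\overline{B_j}\setminus(\Gamma\ltimes X))$ transported by $B_j^{-1}$. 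I will need that thinness is preserved under such transport, i.e.\ that $\theta_B^{-1}$ of a thin set is thin, or at least that its relevant part is thin; this needs checking, and alternatively I would cover it with slightly larger bisections. The outcome should be: outside a thin closed set $Z_1$, each $x$ satisfies $\alpha_g(x)\in U$ for some $g\in F$ with $x\in D_{g^{-1}}$. This is the analogue of \eqref{eq-cover-by-F0}.

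Now apply \autoref{lem:paradoxical_towers} to $F$ and the elements $a,b$, obtaining $A_i,h_i$ with $h_i\in\langle a,b\rangle^+$. Amenability gives a continuous $\mu\colon X\to\Prob(\Gamma)$ that is $\varepsilon$-equivariant on compact subsets of the domains of $g\in F$ and of $h_i^{-1}$. Since $h_i$ is a positive word in $a,b$ and $X\setminus D_a$, $X\setminus D_b$ are thin, the set $X\setminus D_{h_i}$ is contained in a finite union of pullbacks of these thin sets (using semi-saturation-type inclusions $\alpha_a\circ\alpha_b\subset\alpha_{ab}$). I would choose compact $K_i\subset D_{h_i}$ whose complements are covered by thin sets plus a small closed neighbourhood. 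Then define $V_i,W_i$ exactly as before, and the same counting arguments give \eqref{eq-Wcover}, \eqref{eq-X<V0} on $K_i$, and the disjointness \eqref{eq-V<U0}. This yields $X\setminus Z_2\prec V_1\cup V_2\cup V_3$ and $\overline{V'}\setminus Z_1\prec U$ for a suitably shrunk closed $V'$. Composing these and applying the absorption lemma to the thin sets $Z_1,Z_2$ gives $X\prec U$.

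The main obstacle is the bookkeeping of closed versus open sets and thin error sets. Dynamical subequivalence needs closed sets on the left, but the sets $W_i$ and $V_i$ are open and the thin sets sit on the boundaries. I expect to handle this with strict-inequality margins: define $\overline W_i$ via $\le\frac12-3\varepsilon$, and pass $U$ to $U_0$. I also need to show that preimages of thin sets under $\alpha_a$, $\alpha_h$ and the bisections are thin. This should follow by composing bisections, since $Z\prec U$ implies $\theta_B^{-1}(Z)\prec U$ via products $CB$, but the domains must be handled carefully.
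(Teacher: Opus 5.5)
Your overall strategy is the paper's: reserve disjoint pieces $U_1,U_2\subset U$, get a finite $F\subset\Gamma$ from the basis in (ii) up to a thin closed set, apply \autoref{lem:paradoxical_towers}, show $X\setminus D_{h_i}$ is thin via the positive word $h_i$, and run the $V_i,W_i$ counting. However, the gluing step at the end fails as stated, and this is exactly where the positive-dimensional case needs work.

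The composite moves $x\mapsto\alpha_g(\alpha_{h_i^{-1}}(x))$ exist only at points $x\in W_i$ whose translate $\alpha_{h_i^{-1}}(x)$ avoids $Z_1$. A point of $W_i$ with $\alpha_{h_i^{-1}}(x)\in Z_1$ need not lie in $Z_1\cup Z_2$, nor be handled through another index $j$. So absorbing $Z_1$ and $Z_2$ does not yield $X\prec U$. What must be discarded is
\[
E''=Z_1\cup\bigcup_{i=1}^{3}\Big(\alpha_{h_i}\big(Z_1\cap D_{h_i^{-1}}\big)\cup(X\setminus D_{h_i})\Big).
\]
You then need each bracket to be closed and thin. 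It is closed because its complement is $\alpha_{h_i}(D_{h_i^{-1}}\setminus Z_1)$. It is thin because you can compose the witnesses of $Z_1\prec O_1$ with $\alpha_{h_i^{-1}}$, and witness $X\setminus D_{h_i}\prec O_2$ in a disjoint piece $O_2$.

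Moreover, the main argument runs inside the open set $X\setminus E''$, so what you must absorb is not $E''$ but the closure of a neighbourhood of it. Choose an open $N\supset E''$ with $\overline N$ inside the union of the sources witnessing $E''\prec U_2$, so that $\overline N\prec U_2$. Then prove $X\setminus N\prec U_1$ for the compact set $X\setminus N$.

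The second missing point is the order of choices. Since $D_{g^{-1}}$ and $D_{h_i}$ are no longer compact, $\mu$ can only be made almost equivariant on compact sets fixed before $\mu$ is chosen. Your $\overline{V'}$ depends on $\mu$, and $\overline{V'}\setminus Z_1$ is not even closed. In the paper, $N$ is fixed first. Because $\alpha_{h_i^{-1}}(X\setminus E'')\subset X\setminus Z_1$ (this is what the extra term in $E''$ buys), the compact set $(X\setminus N)\cup\bigcup_i\alpha_{h_i^{-1}}(X\setminus N)$ can be covered by open sets $O_g$ with $\overline{O_g}\subset\alpha_{g^{-1}}(D_g\cap U_1)$. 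Only then is $\mu$ chosen, almost $h_i^{-1}$-equivariant on $X\setminus N$ and almost $g$-equivariant on $\overline{O_g}$.

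After that, the open sets $V_i,W_i$ serve only as sources and ranges of open bisections contained in $\{gh_i^{-1}\}\times D_{h_ig^{-1}}$. So no closed versions or extra margins are needed. In fact, with $\varepsilon=\frac1{13}$, your non-strict threshold $\frac12-3\varepsilon=\frac{7}{26}<\frac13$ would destroy the cover $X=\bigcup_iW_i$.

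Your worry in the first step is harmless. The set $s(\overline{B_j}\setminus(\Gamma\ltimes X))$ is compact and is covered by finitely many sets $\theta_{C^{-1}}(Z)$ with $Z=r(\overline{B_j}\setminus(\Gamma\ltimes X))$ thin, hence it is thin. The paper avoids this altogether by choosing the $B_i$ with $X=\bigcup_i\theta_{B_i}(U_1)$, so that the defect is directly $r(\overline{B_i}\setminus(\Gamma\ltimes X))$.
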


For the proof, we need the following lemma.
We refer to \autoref{notation-theta} for the definition of $\theta_B$.
\begin{lemma}\label{lem-thin}
    Let $G$ be a Hausdorff \'etale groupoid such that $G^{(0)}$ has no isolated points. 
Let $T\colon X\to X$ be a local homeomorphism of a compact space with no isolated points. 
Then the following hold
\begin{enumerate}
    \item\label{item-thin-1} Finite unions and closed subsets of $G$-thin sets are $G$-thin.
    \item\label{item-thin-3} Let $E\subset G^{(0)}$ be a $G$-thin subset and let $B_1,\dotsc,B_n$ be open bisections. Then any compact subset $K\subset \bigcup_{i=1}^n\theta_{B_i}( E)$ is $G$-thin.
\end{enumerate}
Now assume further that $G=G_T$ for a minimal local homeomorphism $T\colon X\to X$. 
\begin{enumerate}\setcounter{enumi}{2}
    \item\label{item-thin-2} If $E\subset X$ is a $G_T$-thin set, then $T(E)$ and $T^{-1}(E)$ are $G_T$-thin as well. 
\end{enumerate}
\end{lemma}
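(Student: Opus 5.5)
For \ref{item-thin-1}, closed subsets are immediate: if $A'\subset A$ is closed and $A\prec U$ via bisections $B_1,\dotsc,B_n$, the same bisections witness $A'\prec U$. For finite unions $A\cup A'$ of thin sets and a non-empty open $U$, we use that $G^{(0)}$ has no isolated points to find two disjoint non-empty open subsets $U_1,U_2\subset U$; for instance, pick distinct $x,y\in U$ and separate them by the Hausdorff property. Thinness gives $A\prec U_1$ and $A'\prec U_2$. The union of the two families of bisections then witnesses $A\cup A'\prec U$: the sources cover $A\cup A'$, and the ranges are pairwise disjoint because $U_1\cap U_2=\emptyset$. Induction handles finite unions.

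For \ref{item-thin-3}, we first reduce to a single bisection. Each point of $K$ lies in some $\theta_{B_i}(E)$, i.e. $K\subset\bigcup_i r(B_i\cap s^{-1}(E))$. By compactness and local compactness we choose, for each point of $K$, a compact neighbourhood inside some $r(B_i)$, and cover $K$ by finitely many compact sets $K_j\subset r(B_{i_j})$ with $K_j\cap \overline{\theta_{B_{i_j}}(E)}$ covering $K$. Then $K_j\cap K\subset \theta_{B_{i_j}}(E\cap s(B_{i_j}))$ holds after intersecting with $K$, and $\theta_{B_{i_j}}^{-1}(K_j\cap K)$ is a compact subset of $E$, hence thin by \ref{item-thin-1}. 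By \ref{item-thin-1} again (unions), it therefore suffices to show that if $C\subset s(B)$ is compact and thin, then $\theta_B(C)$ is thin. Given a non-empty open $U$ and bisections $D_1,\dotsc,D_m$ witnessing $C\prec U$, the bisections $D_kB^{-1}$ have sources covering $\theta_B(C)$ and ranges contained in $r(D_k)$, which are pairwise disjoint in $U$. The main care needed here is the compactness bookkeeping ensuring that each piece of $K$ lies in a single $\theta_{B_i}(E)$ and is closed. I would handle this by writing $K\cap r(\overline{W})$ for relatively compact open $W\subset r(B_i)$ with $\overline W\subset r(B_i)$. Then $\theta_{B_i}^{-1}(K\cap\overline W)$ is compact and contained in $E$ precisely when $K\cap\overline W\subset\theta_{B_i}(E)$. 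Since each point of $K$ lies in some $\theta_{B_i}(E)$ but neighbourhoods need not, we instead apply the single-bisection argument to the compact set $\theta_{B_i}^{-1}(K\cap\overline{W})\cap E$. This set is compact when $E$ is closed (thin sets are closed by definition), so the argument works.

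For \ref{item-thin-2}, cover $X$ by finitely many open sets $W_1,\dotsc,W_n$ on which $T$ is injective, each with closure inside a larger open set on which $T$ is injective. The bisection $B_j=B_T(W_j',T(W_j'),1,0)$ satisfies $\theta_{B_j^{-1}}=T|_{W_j'}$. Then $T(E)=\bigcup_j T(E\cap\overline{W_j})$ is compact and contained in $\bigcup_j\theta_{B_j^{-1}}(E)$, so it is thin by \ref{item-thin-3}. Similarly, $T^{-1}(E)$ is closed, hence compact, and contained in $\bigcup_j\theta_{B_j}(E)$, since each $x\in T^{-1}(E)$ lies in some $W_j$ with $T(x)\in E$; so \ref{item-thin-3} applies again. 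This step presents no real obstacle.
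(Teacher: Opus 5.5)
\textbf{There is a gap in (ii).} Your (i) and (iii) follow the paper, and your bisections $\{(x,1,T(x))\}$ are oriented correctly. The problem is the reduction in (ii) to a single bisection. That reduction needs finitely many compact sets, each contained in one $\theta_{B_i}(E)$, whose union is all of $K$.

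Your first version claims $K_j\cap K\subset\theta_{B_{i_j}}(E)$, which is false. Your patch instead uses the pieces
\[
\theta_{B_i}\bigl(\theta_{B_i}^{-1}(K\cap\overline W)\cap E\bigr)=K\cap\overline W\cap\theta_{B_i}(E).
\]
These are compact and thin, but you never check that they cover $K$, and they need not. A point of $K\cap\overline W$ may lie in $\theta_{B_j}(E)$ only for some $j\neq i$, and in no $\overline{W'}$ attached to $j$.

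In fact such a decomposition need not exist, because $K\cap\theta_{B_i}(E)$ is only relatively closed in the open set $K\cap r(B_i)$, not open in $K$. For example, take the following configuration.
\begin{itemize}
\item Let $E=\gamma\sqcup\gamma'$ be two disjoint arcs.
\item Let $B_1$ be the unit bisection over $G^{(0)}\setminus\gamma([\tfrac12,1])$.
\item Let $B_2$ be a bisection whose source is a neighbourhood of $\gamma'$ missing $\gamma$, and which carries $\gamma'$ onto an arc meeting $\gamma$ exactly in $\gamma([\tfrac12,1])$.
\end{itemize}
Then $K=\gamma([0,1])$ satisfies $K\cap\theta_{B_1}(E)=\gamma([0,\tfrac12))$ and $K\cap\theta_{B_2}(E)=\gamma([\tfrac12,1])$. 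Compact subsets of the first set lie in some $\gamma([0,\tfrac12-\varepsilon])$, so any finite union of compact subsets of these two sets misses a segment $\gamma((\tfrac12-\varepsilon,\tfrac12))$. The paper's own multi-bisection step has the same defect: it asserts compact $K_i\subset r(B_i)\cap K\subset\theta_{B_i}(E)$ ``by normality'', but the middle inclusion fails in general.

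\textbf{The fix} is to skip the decomposition of $K$ and run the argument of (i) on $E$ itself.
\begin{itemize}
\item Given a non-empty open $U$, pick pairwise disjoint non-empty open sets $U_1,\dots,U_n\subset U$.
\item Pick bisections $C_{i,1},\dots,C_{i,m_i}$ witnessing $E\prec U_i$.
\item Since $s(C_{i,j}B_i^{-1})=\theta_{B_i}\bigl(s(C_{i,j})\cap s(B_i)\bigr)$, the sources of all the $C_{i,j}B_i^{-1}$ cover $\bigcup_i\theta_{B_i}(E)\supseteq K$.
\item Their ranges lie in the sets $r(C_{i,j})\subset U_i$, which are pairwise disjoint.
\end{itemize}
Hence $K\prec U$. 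Compactness of $K$ is needed only to know that $K$ is closed. With (ii) proved this way, your argument for (iii) goes through unchanged.
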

\begin{proof}
    To prove \ref{item-thin-1}, let $E_1,\dotsc,E_n\subset G^{(0)}$ be $G$-thin sets and let $U\subset G^{(0)}$ be a non-empty open set.
    Since $G^{(0)}$ has no isolated points, there are pairwise disjoint non-empty open sets $U_1,\dotsc,U_n\subset U$. 
    By assumption, we have $E_i\prec U_i$ for all $i=1,\dotsc,n$ and thus $E_1\cup \ldots \cup E_n\prec U$. 
    It is immediate that subsets of $G$-thin sets are $G$-thin. 

    To prove \ref{item-thin-3}, we first consider a single open bisection $B$. 
    Let $K\subset \theta_B(E)$ be a compact subset and let $U\subset G^{(0)}$ be a non-empty open set. 
    Let $C_1,\dotsc,C_n\subset G$ open bisections witnessing $E\prec U$. 
    Then $C_1B^{-1},\dotsc C_nB^{-1}\subset G$ are open bisections witnessing $K\prec U$. 
    We now consider multiple open bisections $B_1,\dotsc,B_n$ and a compact subset $K\subset \bigcup_{i=1}^n\theta_{B_i}(E)$. 
    By normality, we can find compact subsets $K_i\subset r(B_i)\cap K\subset \theta_{B_i}(E)$ such that $K=K_1\cup \ldots \cup K_n$. 
    By the above, each $K_i$ is $G$-thin. By \ref{item-thin-1}, we conclude that $K$ is $G$-thin.

    To prove \ref{item-thin-2}, let $E\subset X$ be a $G_T$-thin set. By compactness, there is an open cover $X\subset U_1\cup\dotsb\cup U_n$ such that $T|_{U_i}\colon U_i\to T(U_i)$ is a homeomorphism for each $i=1,\dotsc,n$.
    By normality and compactness, we may find open subsets $V_i\subset \overline{V_i}\subset U_i$ such that $X\subset V_1\cup \dotsb \cup V_n$.
    Consider the open bisections
    \[B_i\coloneqq \{(T(x),1,x)\mid x\in U_i\}\subset G_T,\text{ for all }i=1,\dotsc,n.\]
    Then the sets 
    \begin{align*}
    T(E)&\subset \theta_{B_1}\left(\overline V_1\cap E\right)\cup\dotsb\cup \theta_{B_n}\left(\overline V_n\cap E\right),\\
    T^{-1}(E)&\subset \theta_{B_1^{-1}}\left(T\left(\overline V_1\right)\cap E\right)\cup\dotsb\cup \theta_{B_n^{-1}}\left(T\left(\overline V_n\right)\cap E\right)
    \end{align*}
    are $G_T$-thin by \ref{item-thin-3}. 
\end{proof}

\begin{proof}[Proof of \autoref{thm-large-hyperbolic-subgroupoid}]
    Let $U\subset X$ be a non-empty open subset. 
    Since $X$ has no isolated points, there are non-empty disjoint open subsets $U_1,U_2\subset U$.
    Since $G$ is minimal and $X$ is compact, we can find relatively compact basic open bisections $B_1,\dotsc,B_n\subset G$ as in condition \ref{item-thin-basis} of the theorem such that 
    \[X=\bigcup_{i=1}^n \theta_{B_i}(U_1).\]
    Using \autoref{lem-thin}~\ref{item-thin-1}, we deduce that the closed set 
    \[E\coloneqq X\setminus \bigcup_{i=1}^n\theta_{B_i\cap (\Gamma\ltimes X)}(U_1)\subset \bigcup_{i=1}^n\theta_{B_i}(U_1)\setminus\theta_{B_i\cap (\Gamma\ltimes X)}(U_1)\subset \bigcup_{i=1}^nr\big(\overline B_i\setminus (\Gamma\ltimes X)\big)\]
    is $G$-thin.
    By assumption \ref{item-thin-basis} of the theorem, there is a finite set $F\subset \Gamma$ such that 
    \begin{equation}\label{eq-cover-by-F}
         X\setminus E= \bigcup_{i=1}^n\theta_{B_i\cap (\Gamma\ltimes X)}(U_1) \subset \bigcup_{g\in F}\alpha_{g^{-1}}(D_g\cap U_1).
    \end{equation}
    Without loss of generality, we may assume that $e\in F$. 
    By \autoref{lem:paradoxical_towers} there are subsets $A_1,A_2,A_3\subset G$ and elements $h_1,h_2,h_3\in \langle a,b\rangle^+$ such that
	\begin{enumerate}
		\item the sets $gA_i$ for $g\in F$ and $i=1,2,3$ are pairwise disjoint and
		\item the sets $G\setminus h_iA_i$ for $i=1,2,3$ are pairwise disjoint.
	\end{enumerate}
    We claim that $X\setminus D_{h_i}$ is $G$-thin for each $i=1,2,3$. 
    Indeed, fix $i=1,2,3$ and write $h_i=g_{i,1}\dotsb g_{i,n_i}$ with $n_i\geq 0$ and $g_{i,j}\in \{a,b\}$ for $1\leq j\leq n_i$. 
    Define $g_{i,0}\coloneqq 1$. 
    If $x\in X\setminus D_{h_i}$, then there must be a $j=0,\dotsc,n_i-1$ such that $(\alpha_{g_{i,0}}\circ \dotsb \circ \alpha_{g_{i,j}})^{-1}$ is applicable to $x$, but $(\alpha_{g_{i,0}}\circ \dotsb \circ \alpha_{g_{i,j}})^{-1}(x)\notin D_{g_{i,j+1}}$. 
   Noting that $X\setminus D_{g_{i,j}}$ is contained in the $G$-thin set $E'\coloneqq (X\setminus D_a)\cup (X\setminus D_b)$ for all $j=1,\dotsc,n_i$, we conclude that the set
    \[X\setminus D_{h_i}\subset \bigcup_{j=0}^{n_i} \alpha_{g_{i,1}\dotsb g_{i,j}}(E'\cap D_{(g_{i,1}\dotsb g_{i,j})^{-1}})\] 
    is $G$-thin by \autoref{lem-thin}.

    Note that the set 
    \[ \alpha_{h_i}\big(E\cap D_{h_i^{-1}}\big)\cup (X\setminus D_{h_i})\subset X\]
    is closed for each $i=1,2,3$ since its complement is given by $\alpha_{h_i}(D_{h_i^{-1}}\setminus E)$. 
    It is moreover $G$-thin: Indeed, if $O\subset X$ is a non-empty open set and $O_1,O_2\subset O$ are disjoint non-empty open subsets, then the composition of the bisections witnessing $E\prec O_1$ with $\alpha_{h_i^{-1}}$ together with the bisections witnessing $X\setminus D_{h_i}\prec O_2$ readily witness $\alpha_{h_i}\big(E\cap D_{h_i^{-1}}\big)\cup (X\setminus D_{h_i})\prec O$. 
    It thus follows from \autoref{lem-thin}~\ref{item-thin-1} that
    \[E''= E\cup \bigcup_{i=1,2,3} (\alpha_{h_i}\big(E\cap D_{h_i^{-1}}\big)\cup (X\setminus D_{h_i}))\]
    is $G$-thin so that $E''\prec U_2$. 
    By normality and compactness, there is an open neighbourhood $N$ of $E''$ such that 
    $\overline N\prec U_2$. %

    We finish the proof by showing that $X\setminus N\prec U_1$. 
    By construction, we have $\alpha_{h_i^{-1}}(X\setminus E'')\subset X\setminus E$ for all $i=1,2,3$ so that \eqref{eq-cover-by-F} implies 
    \[
        X\setminus N\cup \bigcup_{i=1,2,3}\alpha_{h_i^{-1}}(X\setminus N) \subset X\setminus E\subset \bigcup_{g\in F}\alpha_{g^{-1}}(D_g\cap U_1).
    \]
    By normality and compactness, we can find open subsets $O_g\subset \overline{O_g}\subset \alpha_{g^{-1}}(D_g\cap U_1)$ such that 
    \begin{equation}\label{eq-XN-in_O_g}
        X\setminus N\cup \bigcup_{i=1,2,3}\alpha_{h_i^{-1}}(X\setminus N)\subset \bigcup_{g\in F}O_g.
    \end{equation}
    Let $\varepsilon\coloneqq \tfrac{1}{13}$. 
    By amenability of $\alpha$, there is a continuous map $\mu\colon X\to \Prob(\Gamma)$ such that 
    \[\sup_{x\in \overline{O_g}}\|\mu(\alpha_gx)-g\mu(x)\|_1<\varepsilon,\text{ for all }g\in F,\]
    and 
    \[\sup_{x\in X\setminus N}\|\mu(\alpha_{h_i^{-1}}(x))-h_i^{-1}\mu(x)\|_1<\varepsilon, \text{ for all }i=1,2,3.\]
    Define 
    \[V_i\coloneqq \{x\in X\mid \mu(x)(A_i)>\tfrac{1}{2}+\varepsilon\}, \text{ for all }i=1,2,3, \]
    and
    \[W_i\coloneqq \{x\in X\mid \mu(x)(G\setminus h_iA_i)<\tfrac{1}{2}-2\varepsilon\}, \text{ for all } i=1,2,3.\]
    
    By assumption, the sets $G\setminus h_iA_i$ for $i=1,2,3$ are pairwise disjoint so that not all of them can have probability measure $>\frac 1 3$. 
    Since $\frac 1 3<\frac 1 2-2\varepsilon$, it follows that 
    \[X=W_1\cup W_2\cup W_3.\]
    By our choice of $\mu$ and $W_i$, we have $\mu(\alpha_{h_i^{-1}}(x))(A_i)>\frac 1 2 +\varepsilon$ for all $i=1,2,3$ and $x\in (X\setminus N) \cap W_i$ so that $\alpha_{h_i^{-1}}((X\setminus N)\cap W_i)\subset V_i$.
    Combined with \eqref{eq-XN-in_O_g}, this implies
    \begin{equation}\label{eq-X<V}
        \alpha_{h_i^{-1}}((X\setminus N)\cap W_i)\subset  \bigcup_{g\in F}V_i\cap O_g\text{ for all }i=1,2,3.
    \end{equation}
    
    Analogously, since the sets $gA_i\subset G$ for $g\in F$ and $i=1,2,3$ are pairwise disjoint, at most one of them can have probability measure $>\frac 1 2$.
    Since we chose $\mu$ and $V_i$ such that $\mu(\alpha_g(x))(gA_i)>\frac 1 2$ for all $g\in F, x\in V_i\cap O_g$, and $i=1,2,3$, the sets 
    \begin{equation}\label{eq-V<U}
        \alpha_g( V_i\cap O_g)\subset U_1\text{ for }g\in F\text{ and }i=1,2,3
    \end{equation}
    are pairwise disjoint. The combination of \eqref{eq-X<V} and \eqref{eq-V<U} implies $X\setminus N\prec U_1$ and finishes the proof.
    
\end{proof}

\begin{theorem}\label{thm-mainthm-dr}
    Let $T\colon X\to X$ be a minimal surjective local homeomorphism of a compact metrizable space of finite covering dimension. 
    Then $G_T$ satisfies dynamical comparison. 
\end{theorem}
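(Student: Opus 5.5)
By \autoref{cor-reduction}, it suffices to treat a minimal surjective local homeomorphism $T\colon X\to X$ with $|T^{-1}(x)|\geq 2$ for all $x\in X$, where $X$ is compact metrizable of finite covering dimension. In that case we must show that $G_T$ has dynamical comparison with no invariant measures. The absence of invariant measures is already part of the proof of \autoref{cor-reduction}. $X$ is infinite, hence has no isolated points by minimality. By \autoref{thm-Lebesgue-thin}, $G_T$ has the thin boundary property. The plan is to embed a suitable partial action of a free group into $G_T$ and then apply \autoref{thm-large-hyperbolic-subgroupoid}. Since that theorem only ever uses comparison of the whole space into a nonempty open set, comparison with no invariant measures follows.

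\textbf{Construction of the partial action.} As in the zero-dimensional proof, cover $X$ by finitely many open sets $V_{x_j}$ over which $T$ has two disjoint local inverse branches $U^1_{x_j},U^2_{x_j}$. Clopen refinements are not available, so I would choose a finite cover of $X$ by basic open sets $W_1,\dots,W_m$ from the thin-boundary basis. Each $W_k$ should be so small that $T|_{W_k}$ is injective, and each point of $X$ should have two preimages lying in prescribed sheets. Then form the disjointification
\[
P_k\coloneqq W_k\setminus \overline{W_1\cup\dotsb\cup W_{k-1}}.
\]
The $P_k$ are pairwise disjoint open sets whose union misses only a subset of $\bigcup_k\partial W_k$, which is $G_T$-thin by \autoref{lem-thin}~\ref{item-thin-1}.

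Group the $P_k$ into sets $A_1,A_2,\dots,A_d$ of pairwise disjoint open sets with $T|_{A_i}$ injective. Arrange $A_1$ and $A_2$ so that $T(A_1)$ and $T(A_2)$ each cover $X$ up to a thin set. Concretely, $A_i$ is the union of the pieces of the $i$-th sheet over a disjointified cover of $X$ by the $V_{x_j}$, again built from thin-boundary basic sets; I would also apply \autoref{lem-thin}~\ref{item-thin-2} to handle images of boundaries. Let $F_d\overset{\alpha}\curvearrowright X$ be the semi-saturated orthogonal partial action with $\alpha_{a_i}=T|_{A_i}$. Then $F_d\ltimes X$ embeds as an open subgroupoid of $G_T$ via $(g,x)\mapsto(\alpha_g x,\,|g|_\pm,\,x)$, where $|g|_\pm$ is the exponent sum. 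This map is injective by orthogonality, since a reduced word is determined by the branches it uses.

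\textbf{Verifying the hypotheses of \autoref{thm-large-hyperbolic-subgroupoid}.} Amenability of $\alpha$ follows because $F_d\ltimes X$ is an open subgroupoid of the amenable groupoid $G_T$ (\cite{SimsWilliams}). Take $a=a_1$ and $b=a_2$; these are independent loxodromic elements of $F_d$, and $F_d$ has trivial finite radical. The sets $X\setminus D_{a_i}=X\setminus T(A_i)$, for $i=1,2$, are thin by construction.

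The main obstacle is condition \ref{item-thin-basis}. For this I would take as basis the relatively compact bisections $B_T(U,V,n,m)$ with $U,V$ small, and check two things. First, such a bisection meets only finitely many $\{g\}\times D_{g^{-1}}$, since $g$ is determined by the branches used along the first $n$ and $m$ steps of the orbits. Second, $r(\overline B\setminus(F_d\ltimes X))$ consists of points whose forward orbit of length at most $\max(n,m)$ passes through $X\setminus\bigcup_k P_k$. That set lies in $\bigcup_{j\le \max(n,m)}T^{-j}(\text{thin set})$, which is thin by \autoref{lem-thin}~\ref{item-thin-2} and~\ref{item-thin-1}. The delicate part is making this bookkeeping precise at boundary points of $\overline B$, where the two legs $T^n(x)=T^m(y)$ may leave the open sheets. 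I expect that the closedness of the boundaries and the compactness of $\overline B$ handle this.
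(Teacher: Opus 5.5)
Your architecture is the paper's. You reduce via \autoref{cor-reduction}, cover $X$ downstairs by sets $W_{x_j}\subset V_{x_j}$ with $G_T$-thin boundaries, and disjointify to open $P_j$ with $X\setminus E=\bigsqcup_j P_j$. You then pull back along the sheets, let $a_1,a_2$ act through the first two sheets and the remaining pieces through $a_3,\dots,a_d$, and apply \autoref{thm-large-hyperbolic-subgroupoid}. Your ``concretely'' sentence is the right construction, and the earlier upstairs disjointification should be dropped. You should also say explicitly that \emph{all} pulled-back pieces are used, so that $\dom(S)=A_1\sqcup\dots\sqcup A_d=T^{-1}(X\setminus E)$, where $S$ is the restriction of $T$.

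Your injectivity argument for $F_d\ltimes X\to G_T$ is fine up to a sign: $a_i$ at $x$ corresponds to $(T(x),-1,x)$. The paper instead quotes \cite[Theorem~9.6]{ExelSteinberg} for $F_d\ltimes X\cong G_S$. For amenability, \cite{SimsWilliams} only gives amenability of $G_T$. Heredity to open subgroupoids needs its own argument, whereas the paper quotes \cite[Corollary~9.7]{ExelSteinberg} for $G_S$ directly.

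The real problem is the step you call the main obstacle: your description of $r(\overline B\setminus(F_d\ltimes X))$ is not correct. A point $(x,n-m,y)$ with $T^n(x)=T^m(y)$ can fail to lie in $G_S$ because of the $y$-leg, i.e.\ $T^j(y)\in E$ for some $1\le j\le m$. In that case nothing forces any point of the forward orbit of $x$ into $E$ or $T^{-1}(E)$. All you know is $T^n(x)=T^{m-j}(T^j(y))\in T^{m-j}(E)$; for instance, with $n=1$, $m=2$ and $x\neq T(y)$ in the same fibre, $T(y)\in E$ only gives $T(x)\in T(E)$. The correct bound, which is the paper's, is
\[
r\big(\overline{B_T(U,V,n,m)}\setminus G_S\big)\subset\bigcup_{i=0}^n\bigcup_{j=0}^m T^{-i}T^{j}(E).
\]
This set is thin because \autoref{lem-thin}~\ref{item-thin-2} controls forward images as well as preimages, combined with \autoref{lem-thin}~\ref{item-thin-1}.

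The boundary bookkeeping you were worried about is just continuity. The $\Z$-coordinate is locally constant and $r,s,T$ are continuous, so every point of $\overline{B_T(U,V,n,m)}$ has the form $(x,n-m,y)$ with $x\in\overline U$, $y\in\overline V$ and $T^n(x)=T^m(y)$. If such a point is not in $G_S$, then $T^i(x)\in E$ for some $1\le i\le n$, or $T^j(y)\in E$ for some $1\le j\le m$. This gives exactly the displayed inclusion, and it relies on $X\setminus\dom(S)=T^{-1}(E)$, i.e.\ on having used all sheet pieces.
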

\begin{proof}
    By \autoref{cor-reduction}, we may assume that $m_x\coloneqq |T^{-1}(x)|\geq 2$ for all $x\in X$. 
    In particular, $X$ is infinite and has no isolated points.
    By compactness, we moreover have $m_x<\infty$ for all $x\in X$. 
    Using that $T$ is a local homeomorphism we may choose for each $x\in X$ an neighbourhood $V_x$ of $x$ and pairwise disjoint open sets $U_1^x,\ldots,U_{m_x}^x$ covering $T^{-1}(x)$
    such that $T$ restricts to a homeomorphism $T|_{U_i^x}\colon U_i^x\xrightarrow{\cong} V_x$ for each $i=1,\ldots,m_x$.
    Since $G_T$ has the thin boundary property by \autoref{thm-Lebesgue-thin}, we may find another open set $x\in W_x\subset \overline{W_x}\subset V_x$ such that $\partial W_x$ is $G_T$-thin. %

    By compactness, we can choose a finite open cover $X=W_{x_1}\cup\ldots\cup W_{x_n}$ for some $x_1,\ldots,x_n\in X$.
    We define
    \[E\coloneqq \partial W_{x_1}\cup\ldots\cup\partial W_{x_n},\]
    and note that $E$ is a closed $G_T$-thin set. 
    Define recursively,
    $P_1=W_{x_1}\setminus E$, and $P_j=W_{x_j}\setminus (E\cup {W_{x_1}}\cup\ldots\cup {W_{x_{j-1}}})$ for $j=2,\ldots,n$. 
    Note that each $P_j$ is an open set with $P_j\subset W_{x_j}\subset V_{x_j}$, and
    \[X\setminus E=P_1\sqcup\ldots \sqcup P_n. \]
    For $j=1,\ldots,n$ and $i=1,\ldots,m_{x_j}$, we let
    \[U_{i,j}\coloneqq (T|_{U_i^{x_j}})^{-1}(P_j), \]
    and note that each $U_{i,j}\subset U_{i}^{x_j}$ is an open set with $T|_{U_{i,j}}\colon U_{i,j}\to P_j$ a homeomorphism. 
    Moreover, note that $\{U_{i,j}\mid j=1,\ldots,n, \ i=1,\ldots, m_{x_j}\}$ is a collection of pairwise disjoint sets, and define 
    \[A_1\coloneqq \bigsqcup_{j=1}^{n}U_{1,j} \qquad \text{and} \qquad A_2\coloneqq \bigsqcup_{j=1}^{n}U_{2,j}.\]
    Then $A_1$ and $A_2$ are disjoint open sets, and $T|_{A_i}\colon A_i\to X\setminus E$ for $i=1,2$ is a homeomorphism.
    Rename the remaining sets $\{U_{i,j}\colon j=1,\ldots,n, \ i=3,\ldots,m_{x_j}\}$ as $A_3,\ldots,A_d$. 
    By construction, the sets $A_1,\ldots, A_d$ are pairwise disjoint and $T|_{A_i}\colon A_i\to T(A_i)$ is a homeomorphism for each $i=1,\ldots,d$.
    We define a semi-saturated orthogonal partial action $F_d\overset{\alpha}\curvearrowright X$ of the free group on $d$ generators by mapping the generators $a_i$ for $i=1,\dotsc, d$ to $T|_{A_i}\colon A_i\to T(A_i)$.
    In order to finish the proof, we check that the assumptions of \autoref{thm-large-hyperbolic-subgroupoid} are satisfied. 

    To see that $G_T$ contains $F_d\ltimes X$ as an open subgroupoid, denote by 
    \[S\colon A_1\sqcup \dotsb \sqcup A_d\to X\setminus E\] the restriction of $T$. 
    It follows from \cite[Theorem~9.6]{ExelSteinberg} (see also \cite[Theorem~4.1]{deCastroEunJi}) that $F_d\ltimes X\cong G_S$.
    It is easy to see from the definition of the topology on $G_T$ in terms of the basic open sets defined in \eqref{eq-basis-dr} that the natural map $G_S\to G_T$ given by $(x,n,y)\mapsto (x,n,y)$ identifies $G_S$ with an open subgroupoid of $G_T$. 
    
    To check the second condition of \autoref{thm-large-hyperbolic-subgroupoid}, note that
    \[T^{-1}(X\setminus E)=\bigsqcup_{j=1}^{n}T^{-1}(P_j)=\bigsqcup_{j=1}^{n}\bigsqcup_{i=1}^{m_{x_j}}U_{i,j}=A_1\sqcup A_2\sqcup\ldots\sqcup A_d=\dom(S).\]
    Now let $B_T(U,V,n,m)\subset G_T$ be a basic open subset as defined in \eqref{eq-basis-dr}, so that $T^n|_ U$ and $T^m|_{ V}$ are injective.
    By possibly shrinking $U$ and $V$ we may assume that $T^n|_{\overline U}$ and $T^m|_{\overline V}$ are injective as well. 
    Then we have 
    \[r\left(\overline{B_T(U,V,n,m)}\setminus G_S\right)\subset \bigcup_{i=0}^n\bigcup_{j=0}^m T^{-i}T^{j}(E),\]
    which is a $G_T$-thin set by \autoref{lem-thin}. 
    Moreover, $B_T(U,V,n,m)$ only intersects finitely many bisections $\{g\}\times D_{g^{-1}}$ for $g\in F_d$ since this forces $|g|\leq n+m$. 

    By construction, the third condition of \autoref{thm-large-hyperbolic-subgroupoid} is satisfied for the generators $a_1,a_2\in F_d$.
    Finally, $F_d\ltimes X$ is amenable by \cite[Corollary~9.7]{ExelSteinberg}. 
    Thus, \autoref{thm-large-hyperbolic-subgroupoid} is applicable.
\end{proof}

\section{Thin boundaries from finite covering dimension}\label{sec-dimension}
This section is devoted to the proof of \autoref{thm-Lebesgue-thin}. 
We follow the ideas in \cite[Sections 3--4]{Szabo15}, \cite[Lemma~7.5]{KerrSzabo}, \cite[Proposition~3.10]{Buck}, and \cite[Section~3]{Lindenstrauss} developed for group actions.

The following definition is a substitute of the notion of \emph{topologically small sets} from group actions (see \cite{Buck,Szabo15}) which is sufficient for our applications to \'etale groupoids. 
Besides merely generalizing the theory to groupoids, our definition has the key advantage of not incorporating any implicit freeness assumption.
This is the reason why our techniques also prove the classical small boundary property for possibly non-free minimal group actions on finite-dimensional spaces (see \autoref{cor-sbp-group}).

\begin{defi}
For a locally compact Hausdorff {\'e}tale groupoid $G$ with a compact Hausdorff unit space $X$, we let $\mathcal{C}_{-1}=\{\emptyset\}$ and define recursively $\mathcal{C}_m$, for $m\geq 0$, to be the collection of all compact subsets $K\subset X$ which satisfy the following property: whenever $B_0,B_1\subset G$ are open bisections with associated homeomorphisms $\theta_{B_i}\colon s(B_i)\to r(B_i)$ (see \autoref{notation-theta}) and $F_i\subset s(B_i)$, for $i=0,1$, are compact disjoint sets, then the \textit{collision set} 
    \[\theta_{B_0}(K\cap F_0)\cap \theta_{B_1}(K\cap F_1)\] 
    belongs to $\mathcal{C}_{m-1}$.
\end{defi}
    Intuitively, $K\in\mathcal{C}_m$ if whenever two disjoint pieces of $K$ are moved to the same place, then the resulting collision set has one rank lower.

    The statement of the following lemma should be understood as a groupoid analogue of \cite[Lemma~7.5]{KerrSzabo} (see also \cite[Proposition~3.10]{Buck}).
\begin{lemma}\label{lem:finiteDimThin}
    Let $G$ be a minimal locally compact Hausdorff {\'e}tale groupoid with an infinite compact metrizable unit space $X$ of finite Lebesgue covering dimension. Let $K\subset X$ be a closed set and assume that $K\in\mathcal{C}_m$ for some $m\geq -1$.
    Then $K$ is $G$-thin.
\end{lemma}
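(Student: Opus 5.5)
We argue by induction on $m\geq -1$. For $m=-1$ we have $K=\emptyset$, and $\emptyset\prec U$ trivially (take no bisections). So assume $m\geq 0$, that every closed set in $\mathcal{C}_{m-1}$ is $G$-thin, and fix $K\in\mathcal{C}_m$ and a non-empty open set $U\subset X$. The goal is $K\prec U$. Following the strategy of \cite[Lemma~7.5]{KerrSzabo}, we move small pieces of $K$ into $U$. Collisions between the images of distinct pieces lie in sets of class $\mathcal{C}_{m-1}$. By induction these are thin, so they can be relocated into a second, disjoint part of $U$.

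First, by minimality and compactness there are open bisections $C_1,\dotsc,C_k$ with $X=\bigcup_j s(C_j)$ and $r(C_j)\subset U_1$. Here $U_1,U_2\subset U$ are disjoint non-empty open sets, which exist since $X$ is infinite, metrizable and minimal, hence has no isolated points. Shrinking the $C_j$ and using normality, choose compact sets $F_j\subset s(C_j)$ covering $K$, and put $K_j\coloneqq K\cap F_j$.

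We want the compacts to be pairwise disjoint. To arrange this, refine the cover to a closed cover whose members intersect only along "boundaries". Here finite covering dimension enters: with $\dim X=d$, we can take a cover of order at most $d+1$. Then iterate the collision argument at most $d+1$ times.

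Now consider disjoint compact pieces $K_i, K_j$ with $i\neq j$. The collision set $\theta_{C_i}(K_i)\cap\theta_{C_j}(K_j)$ lies in $\mathcal{C}_{m-1}$ by definition of $\mathcal{C}_m$. It is compact, hence $G$-thin by the induction hypothesis. By \autoref{lem-thin}~\ref{item-thin-1}, the finite union $Z$ of all collision sets is thin, and pulling back via $\theta_{C_i}^{-1}$ preserves thinness by \autoref{lem-thin}~\ref{item-thin-3}. So $Z\prec U_2$ via bisections $D_1,\dots,D_l$ with pairwise disjoint ranges in $U_2$. By openness of the witnessing condition, the same bisections also work for an open neighbourhood $N\supset Z$. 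The points of $K$ whose image does not land in $N$ are handled by the restrictions of $C_i$ to $K_i\setminus\theta_{C_i}^{-1}(N)$. These images are pairwise disjoint compact sets outside $N$, so they can be separated by shrinking to disjoint open bisections. The points landing in $N$ are sent onward by composing $D_p$ with $C_i$.

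The main obstacle is the bookkeeping of the overlaps. A point of $K$ may lie in several $F_j$, while a point landing in $N$ must be assigned to only one composed bisection; moreover the images under different $D_pC_i$ must stay disjoint. One could try to avoid the disjointness issue altogether by choosing the $F_j$ pairwise disjoint directly. This is impossible in a connected setting, which is exactly why the definition of $\mathcal{C}_m$ uses disjoint $F_0,F_1$. We would therefore partition $K$ into disjoint compact pieces only up to a set that is itself in $\mathcal{C}_{m-1}$ (a "boundary" piece), using a cover of multiplicity at most $d+1$ and an induction over the multiplicity. We expect this combined induction, on $m$ together with the covering order, to be the delicate step.
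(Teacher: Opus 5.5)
There is a genuine gap, and it is the step you flag as delicate. The definition of $\mathcal{C}_m$ only controls collisions of \emph{disjoint} pieces $K\cap F_0$, $K\cap F_1$. Your pieces $K_j=K\cap F_j$ come from a cover, overlap, and are all sent into the same region $U_1$, so collisions between overlapping pieces are not controlled by $K\in\mathcal{C}_m$ at all. A cover of order at most $d+1$ does not make the pieces disjoint. Your proposed remedy, partitioning $K$ into disjoint compact pieces up to a ``boundary piece'' in $\mathcal{C}_{m-1}$, fails: a closed subset of $K$ is in general only in $\mathcal{C}_m$. Already for $m=0$ the remedy would force $K$ to split into finitely many small disjoint compact pieces, since $\mathcal{C}_{-1}=\{\emptyset\}$. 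But $\mathcal{C}_0$ consists exactly of the compact sets meeting each orbit at most once, and it contains non-degenerate arcs. For example, vertical arcs $\{x_0\}\times[0,\tfrac12]$ for the minimal skew product $(x,y)\mapsto(x+\alpha,y+x)$ on $\mathbb{T}^2$ are connected and admit no such splitting. There is also a second collision problem that you notice but do not resolve: composing one family $D_1,\dots,D_l$ with several different $C_i$ can create new collisions inside $U_2$.

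The missing idea is to use finite dimension in its \emph{coloring} form and to give each color its own target region. Choose disjoint non-empty open sets $R,W_1,\dots,W_{d+1}\subset U$. For each $k$, cover $K$ by sources of bisections with ranges in $W_k$, and take the common refinement of these $d+1$ covers. Refine it via \cite[Lemma~3.2]{Blanchardkirchberg} to $\mathcal{P}=\mathcal{P}_1\sqcup\dots\sqcup\mathcal{P}_{d+1}$ such that distinct members of each $\mathcal{P}_k$ have disjoint closures. Then move $K\cap\overline{P}$ for $P\in\mathcal{P}_k$ into $W_k$ along a bisection $B_P$. Pieces of different colors land in disjoint regions and cannot collide. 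Pieces of the same color are disjoint, so their collision sets lie in $\mathcal{C}_{m-1}$ and are thin by induction. For the second problem, pull neighbourhoods $N_{(P,Q)}$ of the collision sets back only along $B_P$, and send them into pairwise disjoint regions $R_{(P,Q)}\subset R$ indexed by ordered pairs. Alternatively, pull the collision sets back to $K$ using \autoref{lem-thin}~\ref{item-thin-3}, take their union (thin by \autoref{lem-thin}~\ref{item-thin-1}), and move a neighbourhood of it directly into $R$. Either way, the remainders then have pairwise disjoint images in $W_1\cup\dots\cup W_{d+1}$.
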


\begin{proof}
    We prove the lemma by induction on $m$. The case $m=-1$ is trivial.
    Let $m\geq 0$ and assume that the lemma holds for $\mathcal{C}_{m-1}$. Let $W\subset X$ be a nonempty open set.
    Since $G$ is minimal and $X$ is infinite, it follows that $X$ has no isolated points. We can choose $d+2$ nonempty disjoint open sets
    \[R, W_1,\ldots, W_{d+1}\subset W, \]
    where $d\coloneqq \dim(X)$.

    We summarize the main ideas before proceeding with the proof.
    Using minimality, we cover $K$ by finitely many open sets which can be moved along bisections to (possibly overlapping) open subsets of each of the $W_i$.
    After refinement, we can color our open cover into $d+1$ colors so that sets of the same color do not overlap.
    The overlaps that arise after moving the sets of the $i$-th color into $W_i$ then belong to $\mathcal C_{m-1}$ and can be moved into $R$ by the inductive assumption.

    We proceed with the proof.
    By minimality of $G$ and compactness of $K$, for each $j=1,\ldots, d+1$, there exist finitely many open bisections $B_{j,1},\ldots,B_{j,m_j}\subset G$ such that 
    \[K\subset \bigcup_{\ell=1}^{m_j} s(B_{j,\ell}) \qquad \text{and} \qquad r(B_{j,\ell})\subset W_j, \ \text{ for all } \ell= 1,\ldots,m_j.\]
    For each $j=1,\ldots,d+1$, we obtain an open cover
    \[\mathcal{V}_j=\{s(B_{j,\ell}): \ell=1,\ldots,m_j\} \]
    of $K$. Let $\mathcal{V}=\bigvee_{j=1}^{d+1} \mathcal{V}_j$ be the common refinement of these open covers. Then $\mathcal{V}$ is a finite open cover of $K$ with the following property:
    for each $U\in\mathcal{V}$ and every $j=1,\ldots,d+1$, there exists an open bisection $B_{j,U}\subset G$ such that $U\subset s(B_{j,U})$ and $r(B_{j,U})\subset W_j$.

    Since $\dim(X)=d$, by \cite[Lemma~3.2]{Blanchardkirchberg}, we may refine $\mathcal{V}$ to a finite open cover
    \[\mathcal{P}=\mathcal{P}_1\sqcup \ldots \sqcup\mathcal{P}_{d+1} \]
    of $K$ such that
    \begin{enumerate}
        \item\label{item-P-1} for each $P\in\mathcal{P}$, there exists some $V\in \mathcal{V}$ such that $\overline{P}\subset V$, and
        \item\label{item-P-2} the closures of distinct members of $\mathcal{P}_k$ are disjoint, for $k=1,\ldots,d+1$.
    \end{enumerate}
    We say that two elements of $\mathcal{P}$ \textit{are of the same color} if they both belong to $\mathcal{P}_k$ for some $k\in\{1,\ldots,d+1\}$.
    Let $P\in \mathcal{P}$. Let $k\in\{1,\ldots,d+1\}$ be the unique element such that $P\in \mathcal{P}_k$. Using \ref{item-P-1}, there exists $V_P\in\mathcal{V}$ such that 
    $\overline{P}\subset V_P$. Set 
    \[B_P\coloneqq B_{k,V_P}.\] 
    Thus,
    $\overline{P}\subset s(B_{P})$ and $r(B_P)\subset W_k$.
    Moreover, put
    \[K_P\coloneqq K\cap \overline{P}. \]
    Note that the $\{K_P\colon P\in\mathcal{P}\}$ is a collection of compact sets covering $K$, and $K_P\subset s(B_P)$ for all $P\in\mathcal{P}$.

    Next, let $\mathcal{F}$ be the following finite collection of ordered pairs:
    \[\mathcal{F}\coloneqq \{(P,Q)\mid P,Q\in\mathcal{P} \text{ are distinct sets of the same color}\} \]
    Choose pairwise disjoint nonempty open sets
    \[R_{(P,Q)}\subset R, \qquad (P,Q)\in \mathcal{F}.\]
    Note that for each element $(P,Q)\in\mathcal{F}$, we have by \ref{item-P-2}, that $\overline{P}\cap \overline{Q}=\emptyset$, and therefore we also have that $K_P\cap K_Q=\emptyset$.
    Since $K\in\mathcal{C}_m$, it follows that
    \[\theta_{B_P}(K_P)\cap \theta_{B_Q}(K_Q)\in \mathcal{C}_{m-1}.\]
    By the inductive assumption, this intersection is $G$-thin, 
    so we can find an open neighbourhood $N_{(P,Q)}\supset \theta_{B_P}(K_P)\cap \theta_{B_Q}(K_Q)$ such that $\overline{N_{(P,Q)}}\subset r(B_P)\cap r(B_Q)$ and
    \[\overline{N_{(P,Q)}}\prec_{G} R_{(P,Q)}.\]
    By composing the bisections $B_P$ with the bisections witnessing the subequivalence $\overline{N_{(P,Q)}}\prec_{G} R_{(P,Q)}$, we obtain
    \begin{equation}\label{eq:subequiv1}
        \bigcup_{(P,Q)\in \mathcal{F}}\theta_{B_P}^{-1}\Big(\overline{N_{(P,Q)}}\Big) \prec_G R.
    \end{equation}

    Next, for $P\in\mathcal{P}$, consider the closed set
    \[L_P\coloneqq K_P\setminus \bigcup_{\{Q \mid (P,Q)\in\mathcal{F}\}} \theta_{B_P}^{-1}(N_{(P,Q)}).\]
    We claim that $\{\theta_{B_P}(L_P)\mid P\in \mathcal{P}\}$ are pairwise disjoint.
    Indeed, let $P,Q\in \mathcal{P}$ be distinct elements.
    If $P$ and $Q$ are of different colors, say $P\in\mathcal{P}_k$ and $Q\in \mathcal{P}_{k'}$ for $k\neq k'$, then 
    \[\theta_{B_P}(L_P)\cap \theta_{B_Q}(L_Q)\subset r(B_P)\cap r(B_Q)\subset W_{k}\cap W_{k'}=\emptyset. \]
    If, on the other hand, $P$ and $Q$ are of the same color, and $y\in \theta_{B_P}(L_P)\cap \theta_{B_Q}(L_Q)$, choose $x_1\in L_P$ such that $\theta_{B_P}(x_1)=y$.
    Then 
    \[y\in \theta_{B_P}(K_P)\cap \theta_{B_Q}(K_Q)\subset N_{(P,Q)}.\]
    This implies that $x_1= \theta_{B_P}^{-1}(y)\in \theta_{B_P}^{-1}(N_{(P,Q)})$, contradicting the definition of $L_P$.
    We conclude that $\{\theta_{B_P}(L_P)\mid P\in \mathcal{P}\}$ are pairwise disjoint closed subsets contained in $W_1\cup\ldots\cup W_{d+1}$. It is straightforward to check that this implies that
    \[\bigcup_{P\in\mathcal{P}} L_P\prec_G W_1\cup\ldots\cup W_{d+1}.\]
    Combining with Equation~\eqref{eq:subequiv1}, we obtain that
    \[K\subset \left(\bigcup_{P\in\mathcal{P}} L_P\right)\cup \left(\bigcup_{(P,Q)\in \mathcal{F}}\theta_{B_P}^{-1}\big(\overline{N_{(P,Q)}}\big) \right)\prec_G W_1\cup \ldots\cup W_{d+1}\cup R\subset W, \]
    completing the proof.
\end{proof}

\begin{defi}\label{def:controlledBis}
    Let $G$ be a locally compact Hausdorff {\'e}tale groupoid with a compact Hausdorff unit space $X$. A \textit{controlled bisection} $\mathfrak{b}=(B,F)$ is a pair consisting of an open bisection $B\subset G$ and a closed set $F\subset X$ such that $F\subset s(B)$. 
    
    Let $\theta_B\colon s(B)\to r(B)$ denote the homeomorphism associated with the bisection $B$ (see \autoref{notation-theta}). 
    We write $\theta_\mathfrak{b}$ for the restriction $\theta_B|_F$.
    Moreover, for a subset $Y\subset X$, we write 
    \[\theta_\mathfrak{b}(Y)\coloneqq \theta_B(Y\cap F).\]

    Finally, a finite set $\mathcal{L}=\{\mathfrak{b}_1,\ldots,\mathfrak{b}_n\}$ of controlled bisections is called \textit{separated} if the corresponding closed sets $F_1,\ldots,F_n$ are pairwise disjoint.
\end{defi}

\begin{defi}\label{defi:generalPosition}
    Let $G$ be a locally compact Hausdorff {\'e}tale groupoid with a compact metrizable unit space $X$ of finite covering dimension $d=\dim(X)$. For a finite set $\mathcal{S}$ of controlled bisections, we say that a set $E\subset X$ is in \textit{$\mathcal{S}$-general position} if, for every nonempty separated subset $\mathcal{L}\subset \mathcal{S}$ we have
    \[\dim\left(\bigcap_{\mathfrak{b}\in\mathcal{L}}\theta_\mathfrak{b}(E)\right)\leq \max\{d-|\mathcal{L}|, -1\},\]
    with the convention that $\dim(\emptyset)=-1$.
\end{defi}
Intuitively, $E\subset X$ should be thought of as having positive codimension and its translates $\theta_\mathfrak b(E)$ under ``sufficiently different'' controlled bisections $\mathfrak b=(B,F)$ should be thought of as being in ``general position'' in the sense that any further intersection with them reduces the dimension by one. 
Here, the precise meaning of ``sufficiently different'' is implemented by our definition of \emph{separated subsets} and mimics the role of distinct group elements in the definition of \emph{topologically small sets} in Szab\'o's \cite[Definition 3.1]{Szabo15} (see also \cite[Definition 3.2]{Buck}).

The next lemma is a direct groupoid analogue of \cite[Lemma~3.6]{Szabo15}.
Note that, since our notion of separated subsets play the role of freeness in the proof of \cite[Lemma~3.6]{Szabo15}, we do not have any principality assumptions on the groupoid.

\begin{lemma}\label{lem:bdryGeneralPosition}
    Let $G$ be a locally compact Hausdorff {\'e}tale groupoid with a compact metrizable unit space $X$ of finite covering dimension $d=\dim(X)$.
    Let $\mathcal{S}$ be a finite set of controlled bisections. Let $A\subset X$ be a closed set and let $O\subset X$ be an open set, such that $A\subset O$.
    Then there exists an open set $W\subset X$ such that 
    \[A\subset W\subset \overline{W}\subset O,\]
    and $\partial W$ is in $\mathcal{S}$-general position.
\end{lemma}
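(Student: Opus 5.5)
Following Szab\'o's proof of \cite[Lemma~3.6]{Szabo15}, I would build $W$ by a finite inductive shrinking procedure driven by the classical general-position technique for finite-dimensional compact metric spaces. The basic tool is the following fact, essentially \cite[Lemma~3.5]{Szabo15} going back to Lindenstrauss \cite[Section~3]{Lindenstrauss}.

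\begin{quote}
If $X$ is compact metrizable with $\dim X=d$, $Z_1,\dotsc,Z_k\subset X$ are closed with $\dim Z_i\le \max\{d-n_i,-1\}$, and $A\subset O$ with $A$ closed and $O$ open, then there is an open $A\subset W\subset\overline W\subset O$ with $\dim(\partial W\cap Z_i)\le\max\{d-n_i-1,-1\}$ for all $i$.
\end{quote}

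This follows from the characterization of $\dim\le d-n$ via small neighbourhoods with boundaries of lower dimension, applied countably many times and combined with the sum theorem, for example by choosing the boundary inside the level sets of a Urysohn-type function in general position.

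Since the hypothesis involves translates $\theta_{\mathfrak b}(\partial W)$ rather than $\partial W$ itself, I would transport the condition back along the homeomorphisms $\theta_B\colon s(B)\to r(B)$. For a separated $\mathcal L=\{\mathfrak b_1,\dotsc,\mathfrak b_k\}$ with $\mathfrak b_i=(B_i,F_i)$, the set $\bigcap_i\theta_{\mathfrak b_i}(\partial W)$ is homeomorphic via $\theta_{B_1}^{-1}$ to
\[\partial W\cap F_1\cap \theta_{B_1}^{-1}\Bigl(\bigcap_{i\ge 2}\theta_{\mathfrak b_i}(\partial W)\Bigr).\]
Separatedness ensures that $F_1$ is disjoint from each $F_i$ with $i\geq 2$, so the pieces of $\partial W$ near $F_1$ and near the other $F_i$ are disjoint parts of the boundary. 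This allows a local perturbation near $F_1$ without disturbing the already arranged pieces, and it plays the role that freeness plays in \cite{Szabo15}.

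The construction proceeds in finitely many steps:
\begin{enumerate}
\item Enumerate the finitely many nonempty separated subsets $\mathcal L\subset\mathcal S$ and the finitely many ways of singling out one member $\mathfrak b\in\mathcal L$.
\item Fix a small $\delta>0$. Cover $X$ by finitely many open balls of diameter less than $\delta$, where $\delta$ is below the minimal distance between distinct disjoint $F_i$'s. Any such ball then meets at most one $F_i$ within each separated family.
\item Construct $W$ by modifying an initial $W_0$ ($A\subset W_0\subset\overline{W_0}\subset O$) successively on these small balls, in the spirit of \cite[Lemma~3.6]{Szabo15}. At each step, use the fact above inside the ball to make the new boundary piece intersect all the relevant closed sets $\theta_{B}^{-1}$ of previously obtained intersections in one lower dimension.
\end{enumerate}

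The induction should run on $|\mathcal L|$, with the inductive hypothesis that all intersections of size less than $k$ already have the right dimension. An alternative is to require that $\partial W$ be contained in a closed set $Z$ that is already in general position with its own translates, namely a ``generic'' boundary level set.

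The main obstacle is this bookkeeping. One must ensure that modifying $\partial W$ in one small region does not destroy dimension estimates obtained earlier, since $\partial W$ appears simultaneously in all factors of the intersection. In particular, moving $\partial W$ near $F_1$ also changes the sets $\theta_{\mathfrak b_i}(\partial W)$ whenever the region meets some $F_i$. To control this, I would arrange the modifications so that each region is disjoint from all but one $F_i$ in each separated $\mathcal L$; the scale $\delta$ in step 2 provides exactly this. The estimates then only need to be verified where a single factor changes and the others are fixed closed sets. I would also use that the dimension estimates are preserved under finite closed unions by the sum theorem, which lets the local estimates be assembled into global ones.
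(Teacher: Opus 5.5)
Your plan is essentially the paper's proof, which is Szab\'o's Lemma~3.6 template with separatedness replacing freeness. The paper takes neighbourhoods $\widehat C_x\subset O\setminus A$ of the points of $\partial W_0$ that miss $F_i$ or $F_j$ for every disjoint pair (the same role as your $\delta$-balls), enlarges $W_{i+1}=W_i\cup U_i$ with $\overline{U_i}\subset\widehat C_{i+1}$, and uses \cite[Lemma~3.3]{Szabo15} to make $\partial U_i$ avoid a zero-dimensional $F_\sigma$-set built from $Z$ (where $\dim(X\setminus Z)\le d-1$) and the pullbacks $F\cap\theta_B^{-1}(E_{\mathcal L,i}\cap r(B))$, so that by separatedness only one factor changes and the new terms drop one dimension. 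The bookkeeping you leave open is done by an induction over the cover pieces rather than over $|\mathcal L|$: the paper controls only the processed part $\partial W_i\cap\mathcal P_i$ (the terms where no factor changes are otherwise uncontrolled, since $\partial W_0$ is arbitrary), requires $U_i\supseteq\overline{W_i\cap C_{i+1}}$ so that the old boundary in $C_{i+1}$ is absorbed, treats all $|\mathcal L|=r$ at once using the step-$i$ bounds for sizes $r$ and $r-1$, and keeps $\overline{W_i}\subset W_0\cup\mathcal P_M$ so that $\partial W_M\subset\mathcal P_M$ at the end.
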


\begin{proof}
    Choose an open set $W_0\subset X$ such that $A\subset W_0\subset\overline{W_0}\subset O$. 
    Write $\mathcal{S}=\{\mathfrak{b}_1,\ldots, \mathfrak{b}_N\}$, with $\mathfrak{b}_i=(B_i,F_i)$ for every $i=1,\ldots, N$. Let $x\in \partial W_0\subset O\setminus A$. Consider the collection of ordered pairs
    \[\mathcal{I}\coloneqq \{(i,j)\in \{1,\ldots,N\}^2 \mid F_i\cap F_j=\emptyset\},\]
    and define for each $(i,j)\in\mathcal{I}$ the set
    \[H_{(i,j)}\coloneqq \begin{cases}
        F_i, & \text{if } x\notin F_i,\\
        F_j, & \text{if } x\in F_i.
    \end{cases} \]
    Note that  
    \[\widehat C_x\coloneqq (O\setminus A)\cap\bigcap_{(i,j)\in\mathcal{I}}X\setminus H_{(i,j)}\] is an open neighbourhood of $x$.
    Choose an open set $C_x\subset X$ such that
    \[x\in C_x\subset \overline{C_x}\subset \widehat{C}_x. \]
    By construction, if $F_i\cap F_j=\emptyset$ for some $i,j\in\{1,\ldots,N\}$, then 
    \begin{equation}\label{eq:coverAdvoid}
        \widehat{C}_x\cap F_i=\emptyset \qquad \text{or} \qquad \widehat{C}_x\cap F_j=\emptyset.
    \end{equation}
    By compactness of $\partial W_0$, there exists a finite subcover
    \[\partial W_0\subset C_{x_1}\cup \ldots\cup C_{x_M} \]
    for some $x_1,\dotsc,x_M\in \partial W_0$. 
    We abbreviate $C_i=C_{x_i}$ and $\widehat{C}_{i}=\widehat{C}_{x_i}$ and write $\mathcal{P}_i=C_0\cup C_1\cup\ldots\cup C_i$ with $C_0=\emptyset$ for all $i=1,\ldots,M$.
    We will construct inductively open sets
    \[W_0,W_1,\ldots,W_M\subset X \]
    with the following properties:
    \begin{enumerate}
        \item\label{item-W-1} $\overline{W}_i\subset W_0\cup \mathcal{P}_M$, for $i=0,\dotsc,M$.
        \item\label{item-W-2} $\partial W_i\cap \mathcal{P}_i$ is in $\mathcal{S}$-general position, for $i=0,\ldots,M$.
    \end{enumerate}
    Clearly, $W_0$ satisfies the desired conditions. We assume now that we have constructed $W_i$ with the required properties for some $0\leq i<M$, and we will construct $W_{i+1}$.
    Let $\mathcal{L}\subset \mathcal{S}$ be a nonempty separated collection. 
    By the inductive hypothesis,
    \[\dim\left(\bigcap_{\mathfrak{b}\in\mathcal{L}}\theta_\mathfrak{b}(\partial W_i\cap \mathcal{P}_i)\right)\leq \max\{d-|\mathcal{L}|, -1\}. \]
    As can be found in \cite{Engelking} (see also stated in \cite[Section~3]{Szabo15} and in \cite[Section~3]{Lindenstrauss}), we can find an $F_\sigma$-set\footnote{Recall that an $F_\sigma$-set is a countable union of closed sets.}
    \[E_{\mathcal{L},i}\subset \bigcap_{\mathfrak{b}\in\mathcal{L}}\theta_\mathfrak{b}\left(\partial W_i\cap \mathcal{P}_i\right)\] 
    such that $\dim(E_{\mathcal{L},i})\leq 0$ and
    \begin{equation}\label{eq:EdimReduc}
        \dim\left(\bigcap_{\mathfrak{b}\in\mathcal{L}}\theta_\mathfrak{b}(\partial W_i\cap \mathcal{P}_i)\setminus E_{\mathcal{L},i}\right)\leq
        \max\left\{\dim\left(\bigcap_{\mathfrak{b}\in\mathcal{L}}\theta_\mathfrak{b}(\partial W_i\cap \mathcal{P}_i)\right)-1, -1\right\}
    \end{equation}
    By the same reasoning, 
    there exists a zero-dimensional $F_\sigma$-set $Z\subset X$ such that
    \begin{equation}\label{eq:XZ}
        \dim(X\setminus Z)=d-1.
    \end{equation}
    Set
    \begin{equation}\label{eq-def-Ei}
        E_i\coloneqq Z\cup \bigcup_{\substack{\emptyset\neq \mathcal{L}\subset \mathcal{S} \\ \text{separated}}} \bigcup_{\mathfrak{b}=(B,F)\in\mathcal{S}} F\cap (\theta_{B}^{-1}(E_{\mathcal{L},i}\cap r(B))). 
    \end{equation}
    Note that $E_i$ is a zero-dimensional $F_\sigma$-set as a finite union of zero dimensional $F_\sigma$-sets.
    Using the inductive hypothesis, we have 
    \[\overline{W_i\cap C_{i+1}}\subset (W_0\cup \mathcal{P}_M)\cap \widehat{C}_{i+1}. \]
    Hence, applying \cite[Lemma~3.3]{Szabo15}, we can find an open set $U_i\subset X$ such that
    \begin{equation}\label{eq-def-Ui}
        \overline{W_i\cap C_{i+1}}\subset U_i\subset \overline{U}_i\subset (W_0\cup \mathcal{P}_M)\cap \widehat{C}_{i+1}  
    \end{equation}
    and 
    \begin{equation}\label{eq-def-dU}
        \partial U_i\cap E_i=\emptyset.
    \end{equation}
    Define
    $W_{i+1}\coloneqq W_i\cup U_i$. 
    We check that conditions \ref{item-W-1} and \ref{item-W-2} are satisfied for $W_{i+1}$.
    Condition \ref{item-W-1} follows from
    \[\overline{W}_{i+1}=\overline{W}_i\cup \overline{U}_i\subset (W_0\cup \mathcal{P}_M) \cup \overline{U}_i= W_0\cup \mathcal{P}_M.\]

    We complete the induction by proving condition \ref{item-W-2} which states that
    $\partial W_{i+1}\cap \mathcal{P}_{i+1}$
    is in $\mathcal{S}$-general position.
    Fix a nonempty separated collection $\mathcal{L}=\{\mathfrak{b}_1,\ldots,\mathfrak{b}_r\}\subset \mathcal{S}$.

    Using \eqref{eq-def-Ui} and thus $(\partial W_i\setminus U_i)\cap C_{i+1}\subset \overline{W_i\cap C_{i+1}}\setminus U_i=\emptyset$ at the last step, we get 
    \begin{equation*}\label{eq:key}
        \begin{aligned}
        \partial W_{i+1}\cap \mathcal P_{i+1}&= \Big (\Big(\overline{W_i}\cup \overline{U_i}\Big)\setminus (W_i\cup U_i)\Big)\cap (\mathcal P_{i}\cup C_{i+1})\\
        &= \big(({\partial W_i}\setminus U_i)\cup ({\partial U_i}\setminus W_i)\big)\cap (\mathcal P_i\cup C_{i+1})\\
        &\subset (\partial W_i\cap \mathcal P_i)\cup \partial U_i,
        \end{aligned}
    \end{equation*}

    and thus for each $\mathfrak b\in \mathcal L$ the inclusion
    \begin{equation}\label{eq-bWP}
        \theta_\mathfrak{b}(\partial W_{i+1}\cap \mathcal{P}_{i+1})\subset \theta_\mathfrak{b}(\partial W_i\cap \mathcal{P}_i)\cup \theta_\mathfrak{b}(\partial U_i). 
    \end{equation}
    Using Equation~\eqref{eq:coverAdvoid} and the fact that $\mathcal{L}$ is a separated collection, we have that $\theta_{\mathfrak{b}_j}(\partial U_i)\cap \theta_{\mathfrak{b}_k}(\partial U_i)=\emptyset$ for all distinct $j,k\in\{1,\ldots,r\}$. Using this observation 
    in the second step, we have
    \begin{align*}
        \bigcap_{\mathfrak{b}\in\mathcal{L}}\theta_\mathfrak{b}(\partial W_{i+1}\cap \mathcal{P}_{i+1})&\subset \bigcap_{\mathfrak{b}\in\mathcal{L}}\big(\theta_\mathfrak{b}(\partial W_i\cap \mathcal{P}_i)\cup \theta_\mathfrak{b}(\partial U_i)\big)\\ 
        &\subset \left(\bigcap_{\mathfrak{b}\in\mathcal{L}} \theta_{\mathfrak b}(\partial W_i\cap \mathcal{P}_i) \right)\cup \bigcup_{k=1}^{r} \left(\theta_{\mathfrak{b}_k}(\partial U_i)\cap\bigcap_{\mathfrak{b}\in\mathcal{L}\setminus \{\mathfrak b_k\}} \theta_{\mathfrak{b}}(\partial W_i\cap \mathcal{P}_i)\right),
    \end{align*}
   where the empty intersection (in case that $\mathcal L=\{\mathfrak b_k\}$) is understood as $X$. 
   In order to complete the induction, we have to bound the dimension of the latter set by $\max\{d-r,-1\}$. 
   Since the set on the right hand side is a finite union of $F_\sigma$-sets, it suffices to bound the dimension of each of the individual sets separately. 

   By the inductive assumption, we have
   \[\dim\left(\bigcap_{\mathfrak{b}\in\mathcal{L}} \theta_{\mathfrak b}(\partial W_i\cap \mathcal{P}_i)\right)\leq \max\{d-r, -1\}. \]

    To bound the dimension of the set $\theta_{\mathfrak{b}_k}(\partial U_i)\cap\bigcap_{\mathfrak{b}\in\mathcal{L}\setminus \{\mathfrak b_k\}} \theta_{\mathfrak{b}}(\partial W_i\cap \mathcal{P}_i)$ for $k=1,\dotsc,r$, we treat the case $r=1$ and $r\geq 2$ separately. 

    If $r=1$, then $\dim(\theta_{\mathfrak{b}_k}(\partial U_i))=\dim(F_k\cap \partial U_i)\leq \dim(X\setminus Z)\leq d-1$ since 
    $ \partial U_i\subset X\setminus E_i\subset X\setminus Z$ by \eqref{eq:XZ}, \eqref{eq-def-Ei}, and \eqref{eq-def-dU}. 

    If $r\geq 2$, we note that $\theta_{\mathfrak b_k} (\partial U_i)\cap E_{\mathcal L\setminus\{\mathfrak b_k\},i}=\emptyset$ by \eqref{eq-def-dU} and \eqref{eq-def-Ei}.
    Using this at the first step, \eqref{eq:EdimReduc} at the second step and the inductive hypothesis at the last step, we obtain 
    \begin{equation}
        \begin{aligned}
        &\dim\left(\theta_{\mathfrak{b}_k}(\partial U_i)\cap\bigcap_{\mathfrak{b}\in\mathcal{L}\setminus \{\mathfrak b_k\}} \theta_{\mathfrak{b}}(\partial W_i\cap \mathcal{P}_i)\right)\\
        &\leq\dim\left(\bigcap_{\mathfrak{b}\in\mathcal{L}\setminus\{\mathfrak{b}_k\}}\theta_\mathfrak{b}(\partial W_i\cap \mathcal{P}_i)\setminus E_{\mathcal{L}\setminus\{\mathfrak{b}_k\},i}\right)\\
        &\leq \max\left\{\dim\left(\bigcap_{\mathfrak{b}\in\mathcal{L}\setminus\{\mathfrak{b}_k\}}\theta_\mathfrak{b}(\partial W_i\cap \mathcal{P}_i)\right)-1,-1\right\}\\
        &\leq \max\{d-(r-1)-1,-1\}\\
        &= \max \{d-r,-1\}.
        \end{aligned}
    \end{equation}
    This completes the induction. 
    
    Finally, we set $W\coloneqq W_M$ and claim that $W$ satisfies the required properties. 
    First, we have 
    \[A\subset W_0\subset W_1\subset\ldots\subset W_M=W.\]
    Second, by the inductive hypothesis, 
    \begin{equation}\label{eq:induct1}
        \overline{W}_M\subset W_0\cup \mathcal{P}_M=W_0\cup C_0\cup \ldots \cup C_M\subset O. 
    \end{equation}
    In particular, we have $\partial W_M=\partial W_M\cap \mathcal P_M$ since $W_M$ is open and $W_0\subset W_M$. 
    Finally, by the inductive hypothesis $\partial W_M=\partial W_M\cap \mathcal{P}_M$ is in $\mathcal{S}$-general position. 
\end{proof}
Before stating the next lemma, observe that whenever $X$ is a second countable compact Hausdorff space then there exists a countable family $\mathcal{K}$ of compact subsets of $X$ with the following property: whenever $A\subset X$ is a compact set and $O\subset X$ is an open set such that $A\subset O$, then there exists $K\in\mathcal{K}$ such that
\[A\subset \mathrm{int}(K)\subset K\subset O. \]
Indeed, let $\{B_1,B_2,\ldots, \}$ be a countable basis for $X$, then
\[\mathcal{K}\coloneqq \{\overline{B}_{i_1}\cup\ldots\cup\overline{B}_{i_n}: n\geq 1,\ i_1,\ldots,i_n\in\N\} \]
satisfies the required properties.

\begin{lemma}\label{lem:ctrl_disjnt_fin_rank}
    Let $G$ be a second countable locally compact Hausdorff {\'e}tale groupoid with a compact metrizable unit space $X$. Choose a countable basis $\mathcal{B}$ of open bisections of $G$ and choose a countable family $\mathcal{K}$ of compact subsets of $X$ with the following property:
    whenever $A\subset X$ is a compact set and $O\subset X$ is an open set such that $A\subset O$, then there exists $K\in\mathcal{K}$ such that
    \[A\subset \mathrm{int}(K)\subset K\subset O. \]
    Let $m\in \N_0$ and suppose that $E\subset X$ is a closed set and satisfies that
    whenever $\mathcal{L}=\{\mathfrak{b}_1,\ldots, \mathfrak{b}_{m+1}\}$ is a separated collection of controlled bisections $\mathfrak{b}_i=(B_i,F_i)$ with $B_i\in\mathcal{B}$ and $F_i\in\mathcal{K}$ for $i=1,\ldots,m+1$, then one has $\bigcap_{\mathfrak{b}\in\mathcal{L}}\theta_\mathfrak{b}(E)=\emptyset$.
    Then \[E\in \mathcal{C}_{m-1}.\]
\end{lemma}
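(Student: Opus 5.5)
We argue by induction on $m\in\N_0$, proving the statement simultaneously for all closed sets $E$. The basic device is to move each set appearing in the definition of $\mathcal C_{m-1}$ into a set of the form $\bigcap_{\mathfrak b\in\mathcal L}\theta_{\mathfrak b}(E)$, where $\mathcal L$ is a separated family of controlled bisections with $B_i\in\mathcal B$ and $F_i\in\mathcal K$. The intersection operation then lowers the relevant count by one.

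To make the induction work, we first strengthen the claim. We show: if $\mathcal L_0$ is a separated collection of $k$ controlled bisections $(B_i,F_i)$ with $B_i$ arbitrary open bisections and $F_i$ arbitrary compact sets, and if every separated collection of $m+1$ controlled bisections from $\mathcal B\times\mathcal K$ has empty intersection of translates of $E$, then $\bigcap_{\mathfrak b\in\mathcal L_0}\theta_{\mathfrak b}(E)\in\mathcal C_{m-k}$. The lemma is the case $k=1$ with the trivial controlled bisection $(X,X)$, since $X$ is itself an open bisection (the unit space), and we may also require $X\in\mathcal K$.

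\textbf{Step 1 (reduction to the countable families).} Let $(B,F)$ be a controlled bisection with $B$ an arbitrary open bisection and $F\subset s(B)$ compact. Cover the compact set $F$ by finitely many basic bisections $B'\in\mathcal B$ with $B'\subset B$. Then split $F=F^1\cup\dots\cup F^p$ into compact pieces, each contained in some $s(B'_j)$. On $F^j$ the maps $\theta_B$ and $\theta_{B'_j}$ agree. Using the defining property of $\mathcal K$, replace $F^j$ by some $K_j\in\mathcal K$ with $F^j\subset K_j\subset s(B'_j)$. Disjointness must be preserved: if several compact sets $F_i$ are pairwise disjoint, we choose disjoint open neighbourhoods $O_i\supset F_i$ and pick each $K\subset O_i$. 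This keeps the approximating families separated. Translating by the approximations only enlarges the intersections, and each $\theta_{\mathfrak b}(E)$ for an arbitrary $\mathfrak b$ is contained in a finite union of translates by approximations in $\mathcal B\times\mathcal K$.

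\textbf{Step 2 (induction on collision rank).} The recursive definition says: a set $Y$ lies in $\mathcal C_{j}$ if, for disjoint compact $F_0,F_1$ and open bisections $B_0,B_1$, the set $\theta_{B_0}(Y\cap F_0)\cap\theta_{B_1}(Y\cap F_1)$ lies in $\mathcal C_{j-1}$. Take $Y=\bigcap_{\mathfrak b\in\mathcal L_0}\theta_{\mathfrak b}(E)$. Each $\theta_{B_\ell}(Y\cap F_\ell)$ can be written as
\[\bigcap_{\mathfrak b\in \mathcal L_0}\theta_{B_\ell\circ\mathfrak b}(E),\]
where the source sets are cut down to $\theta_{\mathfrak b}^{-1}(F_\ell)$. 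The two resulting families, one for $\ell=0$ and one for $\ell=1$, have source sets that are pairwise disjoint within each family. We then compose bisections and pass to finite unions. The collision set is covered by finitely many intersections over larger separated families, whose size grows from $k$ to at least $k+1$. The induction hypothesis, together with the closure of $\mathcal C_j$ under finite unions and closed subsets (checked separately by a straightforward induction on $j$), places the collision set in $\mathcal C_{m-k-1}$. The base case is $k=m+1$: by hypothesis and Step 1 the intersection is empty, so it lies in $\mathcal C_{-1}$.

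\textbf{Main obstacle.} The hardest part is the bookkeeping in Step 2 showing that the combined family really is separated. An element $x$ in the collision comes with preimages $\theta_{\mathfrak b}^{-1}\theta_{B_0}^{-1}(x)$ lying in $F_0$-pieces and others lying in $F_1$-pieces, and these source points must stay distinct. I would first try adding a single new bisection: take the family $\mathcal L_0$ pulled back through $B_0$, plus one bisection $B_1^{-1}B_0$-type element restricted to a compact subset of $F_1$. Its source is disjoint from $F_0$, so one of the $k+1$ preimages is forced into a disjoint region. If the composed source sets overlap with the pulled-back $F_0$-pieces, I would refine further using Step 1's trick of shrinking into disjoint neighbourhoods.
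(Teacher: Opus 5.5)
The overall architecture is sound: strengthen to intersections of translates of $E$ over a separated family of $k$ arbitrary controlled bisections, induct downward on $k$, and use Step~1 to get emptiness at $k=m+1$. But Step~2 as written rests on a false auxiliary claim. The classes $\mathcal{C}_j$ are closed under passing to closed subsets, but \emph{not} under finite unions. Suppose $x\neq x'$ lie in a common orbit. Then $\{x\},\{x'\}\in\mathcal{C}_0$, since a singleton meets at most one of two disjoint compact sets, so all its collision sets are empty. Now take $B_0=X$, $F_0=\{x\}$, an open bisection $B_1$ containing an arrow from $x'$ to $x$, and $F_1=\{x'\}$. This produces the collision set $\{x\}\neq\emptyset$, so $\{x,x'\}\notin\mathcal{C}_0$.

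In general, the collision sets of $Y_1\cup Y_2$ contain mixed terms $\theta_{B_0}(Y_1\cap F_0)\cap\theta_{B_1}(Y_2\cap F_1)$, which membership of $Y_1,Y_2$ in $\mathcal{C}_j$ does not control. Hence covering the collision set by finitely many intersections over larger separated families does not place it in $\mathcal{C}_{m-k-1}$. The fallback in your last paragraph (refining with Step~1's trick when source sets overlap) reintroduces unions and fails for the same reason. Step~1's union trick is harmless only in the base case, where every piece is empty.

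What is missing is that the separatedness you single out as the main obstacle holds automatically, so no unions are needed at all. Write $Y=\bigcap_{\mathfrak b\in\mathcal L_0}\theta_{\mathfrak b}(E)$ with $\mathfrak b=(B_{\mathfrak b},F_{\mathfrak b})$. Let $B'_0,B'_1$ be open bisections with disjoint compact sets $F'_\ell\subset s(B'_\ell)$. Injectivity of $\theta_{B'_\ell}$ gives the exact identity
\[\theta_{B'_0}(Y\cap F'_0)\cap\theta_{B'_1}(Y\cap F'_1)=\bigcap_{\mathfrak b\in\mathcal L_0}\ \bigcap_{\ell=0,1}\theta_{(B'_\ell B_{\mathfrak b},\,F_{\mathfrak b}\cap\theta_{B_{\mathfrak b}}^{-1}(F'_\ell))}(E).\]
These $2k$ compact source sets are pairwise disjoint. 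For different $\mathfrak b$ they lie in the disjoint sets $F_{\mathfrak b}$. For the same $\mathfrak b$ they are preimages under $\theta_{B_{\mathfrak b}}$ of the disjoint sets $F'_0,F'_1$. Your remark that the new source is ``disjoint from $F_0$'' puts the disjointness on the wrong side; it is needed in the domain of $\theta_{B_{\mathfrak b}}$. Passing to a subfamily of size $k+1$, applying the induction hypothesis, and using closure under closed subsets then completes Step~2.

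The paper sidesteps this set-level bookkeeping entirely. It argues by contradiction at a single point $z$ and pulls it back to $m+1$ distinct points $x_1^0,\dots,x_m^0,x_1^1$ of $E$. Only then does it approximate by members of $\mathcal B$ and $\mathcal K$ around these finitely many points.
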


\begin{proof}
    We prove the lemma by induction on $m$. Assume first that $m=0$. 
    Then for any $x\in X$, there exists an open bisection $B\in\mathcal{B}$ and a $K\in \mathcal K$ such that 
    \[x\in \mathrm{int}(K)\subset K \subset B\subset X.\]
    By assumption, we have 
    \[\{x\}\cap E\subset K\cap E=\theta_{(B,K)}(E)=\emptyset.\]
    Since $x$ was arbitrary, we conclude that $E=\emptyset \in \mathcal C_{-1}$. 

    Assume now that $m\geq 1$. We want to show that $E\in\mathcal{C}_{m-1}$.
    Let $B_0,B_1\subset G$ be open bisections and let $F_0\subset s(B_0)$ and $F_1\subset s(B_1)$ be disjoint compact sets. We must show that
    \[R\coloneqq \theta_{B_0}(E\cap F_0)\cap \theta_{B_1}(E\cap F_1) \]
    belongs to $\mathcal{C}_{m-2}$. Suppose not. Then, by the inductive hypothesis, there is a separated collection $\mathcal{L}=\{\mathfrak{c}_1,\ldots,\mathfrak{c}_{m}\}$ of controlled bisections $\mathfrak{c}_i=(C_i,H_i)$ with $C_i\in\mathcal{B}$ and $H_i\in\mathcal{K}$ for $i=1,\ldots,m$ such that 
    \[ \bigcap_{\mathfrak{c}\in\mathcal{L}}\theta_\mathfrak{c}(R)\neq\emptyset.\]
    Choose $z\in \bigcap_{\mathfrak{c}\in\mathcal{L}}\theta_\mathfrak{c}(R)$, i.e., for each $j=1,\ldots,m$, there is $y_j\in R\cap H_j$ such that $\theta_{C_j}(y_j)=z$.
    Since $y_j\in R$, choose $x_j^{0}\in E\cap F_0$ and $x_j^1\in E\cap F_1$ such that
    \[\theta_{B_0}(x_j^0)=y_j=\theta_{B_1}(x_j^1).\]
    Consider the following $m+1$ points of $E$:
    \[x_1^{0}, x_2^{0},\ldots, x_m^{0}, x_{1}^1 \]
    and note that they are distinct. Consider the $m+1$ bisections obtained by the following compositions:
    \[C_1B_0, C_2B_0,\ldots, C_mB_0, C_1B_1. \]
    Clearly, $\theta_{C_iB_0}(x_i^{0})=z=\theta_{C_1B_1}(x_1^1)$ for $i=1,\ldots,m$.
    Since $\mathcal{B}$ is a basis of open bisections we can find open bisections $D_j^0\subset C_jB_0$ for $j=1,\ldots, m$ and $D_1^1\subset C_1B_1$ in $\mathcal{B}$ such that 
    \[x_j^0\in s(D_j^0), \ \text{ for } j=1,\ldots,m, \qquad \text{and} \qquad x_1^1\in s(D_1^1).\] 
    Now choose pairwise disjoint open neighbourhoods $x_j^0\in O_j^0\subset X$, for $j=1,\ldots, m$ and 
    $x_1^1\in O_1^1\subset X$ such that
    \[O_j^0\subset s(D_j^0), \ \text{ for } j=1,\ldots,m, \qquad \text{and} \qquad O_1^1\subset s(D_1^1). \]
    By the property of the collection $\mathcal{K}$, there exist compact sets $F_j^0$ for $j=1,\ldots,m$ and $F_1^1$ in $\mathcal{K}$ such that
    \[x_j^0\in F_j^0\subset O_j^0 \ \text{ for } j=1,\ldots, m, \qquad \text{and} \qquad x_1^1\in F_1^1\subset O_1^1. \]
    Consider now the separated collection of $m+1$ controlled bisections
    \[\mathfrak{d}_1=(D_1^0,F_1^0), \mathfrak{d}_2=(D_2^0,F_2^0),\ldots, \mathfrak{d}_m=(D_m^0,F_m^0), \mathfrak{d}_{m+1}=(D_1^1,F_1^1). \]
    By our assumption on $E$, one has
    \[\bigcap_{j=1}^{m+1}\theta_{\mathfrak{d}_j}(E)=\emptyset. \]
    This is a contradiction, as one can readily verify that $z$ belongs to this intersection.
\end{proof}

We need an auxiliary compactness fact, corresponding to \cite[Lemma~3.7]{Szabo15}. The proof is short and identical to the one appearing in \cite{Szabo15}, 
we omit the details.

\begin{prop}\label{prop:cptfact}
    Let $G$ be a locally compact Hausdorff {\'e}tale groupoid with a compact Hausdorff unit space $X$, let $\mathcal{S}$ be a finite set of controlled bisections, and let $n\in \N$. Suppose $E\subset X$ is a closed set and satisfies:
    for every separated collection $\mathcal{L}\subset \mathcal{S}$ with $|\mathcal{L}|=n$ one has 
    \[\bigcap_{\mathfrak{b}\in\mathcal{L}}\theta_\mathfrak{b}(E)=\emptyset.\]
    Then there is an open neighbourhood $V\supset E$ such that 
    \[\bigcap_{\mathfrak{b}\in\mathcal{L}}\theta_\mathfrak{b}\left(\overline{V}\right)=\emptyset,\]
    for every separated collection $\mathcal{L}\subset \mathcal{S}$ with $|\mathcal{L}|=n$.
\end{prop}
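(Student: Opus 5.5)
The plan is a standard compactness argument, run separately for each of the finitely many separated $n$-element subcollections $\mathcal L\subset\mathcal S$. Since $\mathcal S$ is finite, it suffices to find, for each such $\mathcal L$, an open set $V_{\mathcal L}\supset E$ with $\bigcap_{\mathfrak b\in\mathcal L}\theta_{\mathfrak b}(\overline{V_{\mathcal L}})=\emptyset$. We then set $V\coloneqq\bigcap_{\mathcal L}V_{\mathcal L}$, a finite intersection of open sets containing $E$. Since $\overline V\subset\overline{V_{\mathcal L}}$, the required property for $V$ follows from monotonicity of $Y\mapsto\theta_{\mathfrak b}(Y)$.

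Now fix $\mathcal L=\{\mathfrak b_1,\dotsc,\mathfrak b_n\}$ with $\mathfrak b_j=(B_j,F_j)$. The key point is that each $\theta_{\mathfrak b_j}=\theta_{B_j}|_{F_j}$ is a homeomorphism from the compact set $F_j$ onto the compact set $\theta_{B_j}(F_j)\subset X$. Hence $Y\mapsto\theta_{\mathfrak b_j}(Y)=\theta_{B_j}(Y\cap F_j)$ sends closed sets to compact sets.

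Suppose, for contradiction, that no open neighbourhood works. Consider the directed family of closed neighbourhoods $\overline V$ of $E$, with $V$ open and $V\supset E$, ordered by reverse inclusion. For each such $V$, the compact set $\bigcap_j\theta_{\mathfrak b_j}(\overline V)$ is non-empty.

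The sets $\overline V$ form a filter base of compact sets whose intersection is exactly $E$. This uses that $X$ is compact Hausdorff, hence regular: every point $x\notin E$ has an open $V\supset E$ with $x\notin\overline V$. Intersecting with $F_j$ and applying the homeomorphism $\theta_{B_j}|_{F_j}$ preserves this filtered intersection, so
\[\bigcap_V\theta_{\mathfrak b_j}(\overline V)=\theta_{\mathfrak b_j}(E).\]
The sets $K_V\coloneqq\bigcap_j\theta_{\mathfrak b_j}(\overline V)$ are non-empty, compact and directed downward, since $K_{V\cap V'}\subset K_V\cap K_{V'}$. By the finite intersection property their total intersection is non-empty. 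That total intersection is contained in $\bigcap_j\bigcap_V\theta_{\mathfrak b_j}(\overline V)=\bigcap_j\theta_{\mathfrak b_j}(E)$, which is empty by hypothesis. This is the desired contradiction.

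The only mildly delicate step is the identity
\[\bigcap_V\theta_{B_j}(\overline V\cap F_j)=\theta_{B_j}\Bigl(\bigcap_V\overline V\cap F_j\Bigr).\]
It holds because $\theta_{B_j}$ is injective on $F_j$. Metrizability is not needed; compactness of $X$ and closedness of each $F_j\subset s(B_j)$ suffice.
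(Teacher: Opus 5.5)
Your proof is correct and is essentially the standard compactness argument the paper intends; the paper omits its proof and cites \cite[Lemma~3.7]{Szabo15}. The ingredients are the ones needed: compactness of each $\theta_{\mathfrak b}(\overline V)$ because $F\subset s(B)$ is closed, injectivity of $\theta_B|_F$ to carry the filtered intersection $\bigcap_V \overline V = E$ through $\theta_{\mathfrak b}$, and the finite intersection property. Phrasing it through the finite intersection property rather than sequences or metric neighbourhoods also means it works verbatim for the merely compact Hausdorff (not necessarily metrizable) unit space assumed in the statement.
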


The next theorem is the groupoid analogue of the diagonal argument in \cite[Theorem~3.8]{Szabo15}.

\begin{theorem}\label{thm:findimThinBdry}
    Let $G$ be a second countable locally compact Hausdorff {\'e}tale groupoid with a compact metrizable unit space $X$ of finite Lebesgue covering dimension $d=\dim(X)$. Let $A\subset X$ be a closed set and let $O\subset X$ be an open set such that $A\subset O$. 
    Then there is an open set $W\subset X$ such that 
    \[A\subset W\subset \overline{W}\subset O,\]
    and $\partial W\in \mathcal{C}_{d-1}$.
\end{theorem}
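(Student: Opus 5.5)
The plan is a diagonal argument in the spirit of \cite[Theorem~3.8]{Szabo15}, built from \autoref{lem:bdryGeneralPosition}, \autoref{prop:cptfact} and \autoref{lem:ctrl_disjnt_fin_rank}.

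\textbf{Setup.} Fix the countable basis $\mathcal B$ of open bisections and the countable family $\mathcal K$ of compact sets, as in \autoref{lem:ctrl_disjnt_fin_rank}. Only the countably many controlled bisections $(B,F)$ with $B\in\mathcal B$, $F\in\mathcal K$ and $F\subset s(B)$ matter. Enumerate them and let $\mathcal S_1\subset\mathcal S_2\subset\dotsb$ be an exhaustion by finite sets.

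\textbf{Goal.} By \autoref{lem:ctrl_disjnt_fin_rank} with $m=d$, it suffices to produce $W$ with $A\subset W\subset\overline W\subset O$ such that
\[
\bigcap_{\mathfrak b\in\mathcal L}\theta_{\mathfrak b}(\partial W)=\emptyset
\]
for every separated $\mathcal L\subset\bigcup_k\mathcal S_k$ with $|\mathcal L|=d+1$. Note that $\mathcal S_k$-general position of a set $E$ gives exactly this for $\mathcal L\subset\mathcal S_k$, since $\dim\le\max\{d-(d+1),-1\}=-1$ forces the intersection to be empty. This vanishing condition is closed under passing to subsets, but general position itself is not stable under changing $W$. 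So the plan is to freeze the property on a neighbourhood of the boundary at each stage and keep all later boundaries inside that frozen region.

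\textbf{Inductive construction.} I would build open sets $W_k$ and open neighbourhoods $V_k\supset\partial W_k$ as follows.
\begin{enumerate}
\item Put $A_0=A$ and $O_0=O$. Apply \autoref{lem:bdryGeneralPosition} with $\mathcal S_1$ to get $W_1$ with $A\subset W_1\subset\overline{W_1}\subset O$ and $\partial W_1$ in $\mathcal S_1$-general position.
\item Apply \autoref{prop:cptfact} with $n=d+1$ to get an open $V_1\supset\partial W_1$ such that the $(d+1)$-fold intersections of $\theta_{\mathfrak b}(\overline{V_1})$ vanish for all separated $\mathcal L\subset\mathcal S_1$. Shrink $V_1$ so that $\overline{V_1}\subset O\setminus A$, and so that it lies within distance $2^{-1}$ of $\partial W_1$ for a fixed metric.
\item Given $W_k$ and $V_k$, apply \autoref{lem:bdryGeneralPosition} with $\mathcal S_{k+1}$ to the closed set $A_k=\overline{W_k}\setminus V_k$ and the open set $O_k=W_k\cup V_k$. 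This yields $W_{k+1}$ with $A_k\subset W_{k+1}\subset\overline{W_{k+1}}\subset O_k$.
\item Then $\partial W_{k+1}\subset O_k\setminus A_k\subset V_k$. Apply \autoref{prop:cptfact} again to get $V_{k+1}\supset\partial W_{k+1}$ with $\overline{V_{k+1}}\subset V_k$, of diameter-scale $2^{-(k+1)}$ around $\partial W_{k+1}$, and with vanishing $(d+1)$-fold intersections for separated $\mathcal L\subset\mathcal S_{k+1}$.
\end{enumerate}
The vanishing for $\mathcal S_j$ with $j\le k$ is inherited, because $\overline{V_{k+1}}\subset\overline{V_j}$.

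\textbf{Passing to the limit (main obstacle).} The hardest step is showing that a limit set $W$ has $\partial W\subset\bigcap_k\overline{V_k}$. Given that, $\theta_{\mathfrak b}(\partial W)\subset\theta_{\mathfrak b}(\overline{V_k})$ yields the required vanishing for every finite $\mathcal S_k$, and hence for all separated $\mathcal L$. I would arrange this as follows.
\begin{enumerate}
\item Record that $W_{k+1}\supset W_k\setminus V_k$ and $W_{k+1}\subset W_k\cup V_k$, so the sets $W_k$ differ only inside the shrinking $V_k$. Set $W=\bigcup_k\operatorname{int}\bigl(\bigcap_{j\ge k}W_j\bigr)$, or more simply $W=\bigcup_k(W_k\setminus\overline{V_k})$. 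This is open and contains $A$, since $A\subset W_k\setminus\overline{V_k}$ by the choice $\overline{V_1}\subset O\setminus A$ and nesting.
\item The complement $X\setminus\overline{W}$ contains $\bigcup_k(X\setminus(\overline{W_k}\cup\overline{V_k}))$.
\item The three pieces $W_k\setminus\overline{V_k}$, $\overline{V_k}$ and $X\setminus(\overline{W_k}\cup\overline{V_k})$ cover $X$. The first and third are stable for all later $j\ge k$, thanks to the inclusions in the first item and $\overline{V_{j}}\subset V_k$.
\item Hence any boundary point of $W$ must lie in $\overline{V_k}$ for every $k$, and $\overline W\subset\overline{W_1}\cup\overline{V_1}\subset O$.
\end{enumerate}
Checking this stability carefully is the main bookkeeping. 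I would try first to show $W_j\setminus\overline{V_k}=W_k\setminus\overline{V_k}$ for all $j\ge k$ by induction.

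Finally, apply \autoref{lem:ctrl_disjnt_fin_rank} with $m=d$ to conclude $\partial W\in\mathcal C_{d-1}$.
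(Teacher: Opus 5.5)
Your proposal is correct and is essentially the paper's proof: the same diagonal argument alternating \autoref{lem:bdryGeneralPosition} and \autoref{prop:cptfact} with nested neighbourhoods of the boundaries, finished by \autoref{lem:ctrl_disjnt_fin_rank} with $m=d$. The only difference is bookkeeping. The paper applies \autoref{lem:bdryGeneralPosition} to $\overline{O_k}\subset O_k\cup N_k$, so its sets increase and $W=\bigcup_k O_k$ works directly, whereas your choice $A_k=\overline{W_k}\setminus V_k$ lets $W_{k+1}$ shrink inside $V_k$ and so needs your limit $\bigcup_k(W_k\setminus\overline{V_k})$; its stability $W_j\setminus\overline{V_k}=W_k\setminus\overline{V_k}$ for $j\ge k$ does hold by the induction you suggest.
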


\begin{proof}
    Let $\mathcal{B}$ be a countable basis of open bisections for $G$. 
    Let $\mathcal{K}$ be a countable family of compact subsets in $X$ with the property that whenever $C\subset X$ is a closed set and $L\subset X$ is an open set with $C\subset L$, then there is $K\in\mathcal{K}$ such that
    \[C\subset\mathrm{int}(K)\subset K\subset L.\]
    Enumerate the countable collection of all controlled bisections
    \[\mathfrak{b}_1=(B_1,F_1), \mathfrak{b}_2=(B_2,F_2),\ldots \]
    such that $B_i\in \mathcal{B}$ and $F_i\in\mathcal{K}$ for all $i\in\mathbb{N}$.
    Choose an open set $O_0\subset X$ such that 
    \[A\subset O_0\subset \overline{O}_0\subset O. \]
    We will construct inductively open sets $O_0\subset O_1\subset O_2\subset\ldots\subset O$ and open sets $\ldots N_2\subset N_1\subset X$ by alternately applying \autoref{lem:bdryGeneralPosition} and \autoref{prop:cptfact}, such that for each $k\in\N$ one has:
    \begin{enumerate}
        \item\label{item-ON1} $\partial O_k\subset N_k$, and
        \item\label{item-ON2} for every separated collection $\mathcal{L}\subset \{\mathfrak{b}_1,\ldots,\mathfrak{b}_k\}$ with $|\mathcal{L}|=d+1$, one has
        \[\bigcap_{\mathfrak{b}\in\mathcal{L}}\theta_\mathfrak{b}\big(\overline{N}_k\big)=\emptyset. \]
    \end{enumerate}
    For the construction of $O_1$, apply \autoref{lem:bdryGeneralPosition} to find an open set $O_1\subset X$ such that 
    \[\overline{O}_0\subset O_1\subset \overline{O}_1\subset O\]
    and $\partial O_1$ is in $\{\mathfrak{b}_1\}$-general position, i.e., $\dim(\theta_{\mathfrak{b}_1}(\partial O_1))\leq d-1$. 
    Hence, for $k=1$, condition~\ref{item-ON2} is satisfied for $\partial O_1$ (since, if $d=0$, this means that $\theta_{\mathfrak{b}_1}(\partial O_1)=\emptyset$ and if $d>0$, then the condition is vacuous).
    Applying \autoref{prop:cptfact} and using that $\partial O_1\subset O$, we can find an open set $N_1\subset X$
    such that
    \[\partial O_1\subset N_1\subset \overline{N}_1\subset O \]
    and condition~\ref{item-ON2} is satisfied for $\overline{N}_1$ (and $k=1$).

    Suppose now that $O_k,N_k$ have been constructed for some $k\geq 1$. 
    Apply \autoref{lem:bdryGeneralPosition} with respect to the inclusion $\overline{O}_k\subset O_k\cup N_k$ and the finite family $\mathcal{S}_{k+1}\coloneqq \{\mathfrak{b}_1,\ldots,\mathfrak{b}_{k+1}\}$ of controlled bisections, to find an open set $O_{k+1}\subset X$ such that
        \begin{equation}\label{eq:Ok}
        \overline{O}_k\subset O_{k+1}\subset \overline{O}_{k+1}\subset O_k\cup N_k,
    \end{equation}
    and $\partial O_{k+1}$ is in $\mathcal{S}_{k+1}$-general position, i.e., whenever $\mathcal{L}\subset \{\mathfrak{b}_1,\ldots, \mathfrak{b}_{k+1}\}$ is a separated collection of controlled bisections, then
    \[\dim\left(\bigcap_{\mathfrak{b}\in\mathcal{L}} \theta_\mathfrak{b}(\partial O_{k+1})\right)\leq \max\{d-|\mathcal{L}|, -1\}. \]
    Thus, for $k+1$, condition~\ref{item-ON2} is satisfied for $\partial O_{k+1}$. 
    By \autoref{prop:cptfact}, we can choose an open set $N_{k+1}\subset X$ such that $\partial O_{k+1}\subset N_{k+1}$ and condition~\ref{item-ON2} is satisfied for $\overline{N}_{k+1}$. Note that, using Equation~\eqref{eq:Ok}, we have
    \[\partial O_{k+1}= \overline{O}_{k+1}\setminus O_{k+1}\subset (O_k\cup N_k)\setminus O_{k+1}\subset N_k. \]
    Hence, by refining the choice of $N_{k+1}$, we may assume that
    \[\partial O_{k+1}\subset N_{k+1}\subset \overline{N}_{k+1}\subset N_k. \]
    The induction is now complete, and we note that, by construction, we additionally have the following properties:
    \begin{enumerate}\setcounter{enumi}{2}
        \item\label{item-ON3} $\overline{O}_{k+1}\subset O_k\cup N_k$, for all $k\in\N$, and
        \item\label{item-ON4} $\overline{N}_{k+1}\subset N_k$, for all $k\in \N$.
    \end{enumerate}
    Define $W=\bigcup_{k=0}^{\infty}O_k$. 
    Then $W$ is open and $A\subset O_0\subset W$ by construction.
    Next, we claim that for all $k\in \N$, we have 
    \begin{equation}\label{eq:inductInclusion}
         W\subset O_k\cup N_k.
    \end{equation}
    Indeed, we show that $O_l \subset O_k\cup N_k$ by induction on $l\geq k$. 
    The assertion for $l=k$ is trivial. 
    Assume the assertion for a fixed $l\geq k$. 
    Using condition \ref{item-ON3} at the first step, condition \ref{item-ON4} at the second step, and the inductive assumption at the last step, we obtain 
    \[O_{l+1}\subset O_l\cup N_l\subset O_l\cup N_k \subset O_k\cup N_k.\]

    Next, we claim that
    \begin{equation} \label{eq-partialWinN}
        \partial W\subset {N}_k.
    \end{equation}
    Indeed, for $x\in \partial W$, there exists $x_n\in W$ for $n\in \N$ such that $x_n\xrightarrow{n\to \infty}x$. 
    If $x_n\in N_{k+1}$ for infinitely many $n$, then in particular $x\in \overline{N_{k+1}}\subset N_k$. 
    If $x_n\notin N_{k+1}$ for infinitely many $n$, then for these $n$ we have $x_n\in O_{k+1}$ by \eqref{eq:inductInclusion} and thus $x\in \overline{O_{k+1}}\subset O_k\cup N_k$ by condition \ref{item-ON3}. 
    Since $x\in \partial W\subset X\setminus W \subset X\setminus O_k$, we get $x\in N_k$ as desired.
    
    As an immediate consequence of \eqref{eq-partialWinN}, we get $\overline W=W\cup \partial W\subset O\cup N_k=O$.
    Equation \eqref{eq-partialWinN} moreover implies that $\partial W$ satisfies condition \ref{item-ON2} for all $k\in \N$.
    It follows from \autoref{lem:ctrl_disjnt_fin_rank} that $\partial W\in \mathcal{C}_{d-1}$.
\end{proof}
\begin{cor}\label{cor-sbp-groupoid}
    Let $G$ be a minimal second countable locally compact Hausdorff {\'e}tale groupoid with a compact metrizable unit space $X$ of finite Lebesgue covering dimension. 
    Then $G$ has the thin boundary property. 
\end{cor}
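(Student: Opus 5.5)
The plan is to combine \autoref{thm:findimThinBdry} with \autoref{lem:finiteDimThin}. The first produces small neighbourhoods whose boundaries lie in $\mathcal C_{d-1}$; the second says that such sets are $G$-thin.

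First we dispose of the degenerate case. If $X$ is finite, minimality forces $X$ to be a single orbit. The topology is then discrete, so the basis of singletons has empty boundaries, and the empty set is trivially $G$-thin. Hence we may assume that $X$ is infinite, which is the setting required in \autoref{lem:finiteDimThin}.

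Let $d=\dim(X)<\infty$ and let $\mathcal B$ be the collection of all open sets $W\subset X$ with $\partial W\in\mathcal C_{d-1}$. We claim that $\mathcal B$ is a basis for the topology of $X$. Let $x\in X$ and let $O$ be an open neighbourhood of $x$. Apply \autoref{thm:findimThinBdry} to the closed set $A=\{x\}$ and the open set $O$. This gives an open $W$ with $x\in W\subset\overline W\subset O$ and $\partial W\in\mathcal C_{d-1}$. So $W\in\mathcal B$ and $x\in W\subset O$, which proves the claim.

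Now let $W\in\mathcal B$. The boundary $\partial W$ is closed and lies in $\mathcal C_{d-1}$ with $d-1\geq -1$, and $G$ is minimal with infinite compact metrizable unit space of finite covering dimension. So \autoref{lem:finiteDimThin} applies with $m=d-1$ and shows that $\partial W$ is $G$-thin. Hence $\mathcal B$ witnesses the thin boundary property, and \autoref{thm-Lebesgue-thin} follows in the same way.

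There is no real obstacle beyond checking that the hypotheses match. Second countability of $G$ is needed for \autoref{thm:findimThinBdry}; minimality and infiniteness of $X$ are needed for \autoref{lem:finiteDimThin}. The only case that needs separate handling is the finite unit space, and we treated it above.
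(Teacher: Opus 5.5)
Your proof is correct and follows the same route as the paper: dispose of the finite (hence discrete) case, use \autoref{thm:findimThinBdry} to get a basis of open sets whose boundaries lie in $\mathcal C_{d-1}$, and then apply \autoref{lem:finiteDimThin} to conclude that these boundaries are $G$-thin. You simply spell out in more detail what the paper's two-line proof leaves implicit.
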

\begin{proof}
    Without loss of generality, we may assume that $X$ is infinite. 
    Now the statement follows from the combination of \autoref{thm:findimThinBdry} and \autoref{lem:finiteDimThin}. 
\end{proof}

Recall from \cite{Shub1991, Lindenstrauss} that an action $\Gamma\curvearrowright X$ of a discrete group on a compact space has the \emph{small boundary property} if there is a basis $\mathcal B$ for the topology on $X$ such that for every $B\in \mathcal B$ and every $\Gamma$-invariant Borel probability measure $\mu$ on $X$, we have $\mu(\partial B)=0$. 

The following corollary partially generalizes results from \cite{Szabo15,Gardella2024,Kerr2024} since it does not use any freeness assumption. This however comes at the cost of assuming minimality.
\begin{cor}\label{cor-sbp-group}
Let $\Gamma\curvearrowright X$ be a minimal action of a countable discrete group on a compact metrizable space of finite Lebesgue covering dimension. 
Then $\Gamma\curvearrowright X$ has the small boundary property.
\end{cor}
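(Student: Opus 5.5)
The plan is to deduce the small boundary property directly from \autoref{cor-sbp-groupoid}, applied to the transformation groupoid $G\coloneqq \Gamma\ltimes X$ of the global action. Since $\Gamma$ is countable and $X$ is compact metrizable, $G$ is a second countable locally compact Hausdorff \'etale groupoid with unit space $X$ of finite covering dimension. Minimality of the action is exactly minimality of $G$. Hence \autoref{cor-sbp-groupoid} provides a basis $\mathcal B$ of $X$ such that $\partial U$ is $G$-thin for every $U\in\mathcal B$. It remains to show that every $G$-thin closed set $E$ satisfies $\mu(E)=0$ for every $\Gamma$-invariant Borel probability measure $\mu$.

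First, $\Gamma$-invariant measures are $G$-invariant. For an open bisection $B\subset \Gamma\times X$, decompose $B=\bigsqcup_{g}\{g\}\times S_g$, where the $S_g$ are open and pairwise disjoint. Then $\mu(r(B))=\sum_g\mu(gS_g)=\sum_g\mu(S_g)=\mu(s(B))$; the sets $gS_g$ are pairwise disjoint because $B$ is a bisection.

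Next, suppose $E\prec U$ is witnessed by open bisections $B_1,\dotsc,B_n$. Disjointify the sources Borel-wise, setting $S_i\coloneqq E\cap s(B_i)\setminus\bigcup_{j<i}s(B_j)$. Each $S_i$ is moved by $\theta_{B_i}$ (piecewise by group elements, so measure-preservingly) into $r(B_i)$, and the sets $r(B_i)$ are disjoint subsets of $U$. This gives $\mu(E)\le\mu(U)$.

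Finally, we show $\mu(E)=0$. If $X$ is finite, minimality makes $X$ a single orbit. In that case there are no proper nonempty open sets to worry about beyond points, and the clopen basis of singletons has empty boundaries, so the statement is trivial. We may therefore assume $X$ is infinite, hence without isolated points. Suppose $\mu(E)=\varepsilon>0$. Any invariant $\mu$ has full support by minimality, since its support is closed and invariant. So we choose $N>1/\varepsilon$ pairwise disjoint nonempty open sets $U_1,\dotsc,U_N$, and then choose a nonempty open $U\subset U_1$ with $\mu(U)<\varepsilon$. Such a $U$ exists: $\mu$ has no atoms, because an atom would have an infinite orbit of equal mass by invariance, which is impossible for a probability measure (the orbit of a point in an infinite minimal system is infinite). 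By regularity, a small neighbourhood of a point then has measure less than $\varepsilon$. Thinness gives $E\prec U$, so $\varepsilon=\mu(E)\le\mu(U)<\varepsilon$, a contradiction. Hence $\mu(\partial U)=0$ for all $U\in\mathcal B$, which is the small boundary property.

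The main point of care is the measure-comparison step, namely checking that $\theta_B$ is measure preserving on Borel pieces. This holds because the action is global, so the bisection decomposes along group elements.
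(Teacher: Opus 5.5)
Your proof is correct and follows the paper's route: apply Corollary~\ref{cor-sbp-groupoid} to $\Gamma\ltimes X$, then check that $\Gamma\ltimes X$-thin sets are null for every $\Gamma$-invariant measure, which the paper asserts in one line and you verify in detail. One small simplification: your $N>1/\varepsilon$ disjoint nonempty open sets already contain some $U_i$ with $\mu(U_i)\le 1/N<\varepsilon$ by pigeonhole, so the non-atomicity argument (and full support) is not needed.
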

\begin{proof}
    Without loss of generality, we can assume that $X$ is infinite and thus has no isolated points. 
    It follows that for any $\Gamma\ltimes X$-thin set $A\subset X$, we have $\mu(A)=0$ for all $\Gamma$-invariant probability measures $\mu$ on $X$. 
    The corollary thus follows from \autoref{cor-sbp-groupoid}.
\end{proof}

\bibliography{references.bib}
\bibliographystyle{alphaurl}

\end{document}